\newtheorem{remark}{Remark}[section]
\newtheorem{example}{Example}[section]
\newtheorem{theorem}{Theorem}[section]
\newtheorem{proposition}{Proposition}[section]
\newtheorem{corollary}{Corollary}[section]
\begin{document}

\title{\bf On a four-parameters generalization of some special sequences}

\author{Robson da silva\footnote{Supported by CNPq}, Kelvin S. de Oliveira, and Almir C. G.  Neto}

\date{}
\maketitle

\begin{abstract}
We introduce a new four-parameters sequence that simultaneously generalizes some well-known integer sequences, including Fibonacci, Padovan, Jacobsthatl, Pell, and Lucas numbers. Combinatorial interpretations are discussed and many identities for this general sequence are derived. As a consequence, a number of identities for Fibonacci, Lucas, Pell, Jacobsthal, Padovan, and Narayana numbers as well as some of their generalizations are obtained. We also present the Cassini formula for the new sequence.
\end{abstract}

\noindent {\bf keyword}: Generalized Fibonacci numbers, Generalized Lucas numbers, tilings, identities.

\noindent {\bf MSC}: 11B39, 05A19

\section{Introduction}

The Fibonacci numbers are defined recursively by $F_n = F_{n-1}+F_{n-2}$ for $n \geq 2$, with initial conditions $F_0=0$ and $F_1=1$. The Fibonacci numbers have been generalized by a number of authors in many different ways. In \cite{falcon2007fibonacci}, the authors introduced the $k$-Fibonacci numbers $F(k,n)=kF(k,n-1)+F(k,n-2)$, for $n\geq 2$, $F(k,0)=0$ and $F(k,1)=1$. In \cite{distance2}, the $(2,k)$-Fibonacci numbers were introduced by the recurrence relation $F_{2}(k,n)=F_{2}(k,n-2)+F_{2}(k,n-k)$, for $n\geq k$, with $F_{2}(k,n)=1$ for $n=0, \ldots, k-1$, and the authors interpreted them in terms of certain set decompositions. A two-variables polynomial generalization is presented in \cite{amdeberhan2014generalized}. Among others, some generalizations can be seen in \cite{bednarz2015three, distance, benjamin2008tiling, falcon2014generalized,  flaut2013generalized, K1}. Some of the generalizations have interesting and useful graph interpretations, see \cite{W1, W2, wloch2013some}.

In this paper, we introduce a new generalization not only for the Fibonacci numbers, but for a many other special integer sequences and their generalizations. Our four-parameters sequence, $F^{i}_{r,s}(k,n)$, is defined by the recurrence relation:
$$F^{i}_{r,s}(k,n) =  \begin{array}{l}
rF^{i}_{r,s}(k,n-i)+sF^{i}_{r,s}(k,n-k),
\end{array}$$
with appropriate initial conditions. We also introduce a new generalization of the Lucas numbers $L_{n}$:
$$L^{i}_{r,s}(k,n)=r^{k-1}\lbrace(k-1)sF^{i}_{r,s}(k,n-k-(k-1)i)+F^{i}_{r,s}(k,n-(k-1)i)\rbrace.$$

For $i=r=s=1$ and $k=2$ in $F^{i}_{r,s}(k,n)$ and $L^{i}_{r,s}(k,n)$, we obtain the numbers $F_{n}$ and $L_{n}$, respectively. In addition, Pell, Jacobsthal, Padovan and Narayana numbers  and some of their generalizations are obtained from $F^{i}_{r,s}(k,n)$ (Section \ref{Sectn1}).


We present three combinatorial interpretations for $F^{i}_{r,s}(k,n)$: one in terms of tilings, another in terms of color decomposition of sets and, finally, a graph interpretation. A tiling interpretation is also given for  $L^{i}_{r,s}(k,n)$.

We use the tiling counting technique, to obtain a closed formula for $F^{i}_{r,s}(k,n)$ and new identities for these sequences. The combinatorial aspects of counting via tilings has been explored and employed by many authors, see \cite{benjamin2008tiling, benjamin2003, briggs2010combinatorial, robson} for instance. Although we could have just used induction in the proofs, we preferred the tilling approach in order to make it clear where the identities stem from.



In Section \ref{Sec6} we are describe the matrix from where we can obtain the numbers $F_{r,s}^{i}(k,n)$. In other words, we develop to $F_{r,s}^{i}(k,n)$ the matrix approach that has been used by many authors for studying particular sequences, see for instance \cite{brualdi1991matrix, distance2}.

\section{The sequences $F_{r,s}^{i}(k,n)$ and $L_{r,s}^{i}(k,n)$}
\label{Sectn1}

In this section, we introduce the numbers $F_{r,s}^{i}(k,n)$ and $L_{r,s}^{i}(k,n)$ and provide examples of sequences that can be obtained from them by specializing the parameters $r,s,i$, and $k$. In particular, these new numbers are generalizations of the Fibonacci and Lucas numbers.

Let $i,k,r,s\geq 1$, and $n\geq-1$ be integers. We define the sequences $F^{i}_{r,s}(k,n)$ and $L^{i}_{r,s}(k,n)$ by the following recurrence relations:
\begin{equation}F^{i}_{r,s}(k,n) = \left\{ \begin{array}{l}
rF^{i}_{r,s}(k,n-i)+sF^{i}_{r,s}(k,n-k), \mbox{ for }  n \geq \max \lbrace k,i\rbrace-1 \\
r^{\lfloor\frac{n+1}{i}\rfloor}$, for $i \leq k$ and  $n=-1,0,1,\ldots,  k - 2 \\
s^{\lfloor\frac{n+1}{k}\rfloor}$, for $k < i$ and  $n=-1,0,1,\ldots ,i-2 \\
\end{array} \right.
\label{23.1}
\end{equation}
and
$$L^{i}_{r,s}(k,n)=r^{k-1}\lbrace(k-1)sF^{i}_{r,s}(k,n-k-(k-1)i)+F^{i}_{r,s}(k,n-(k-1)i)\rbrace,$$ for $k>i$ and $n \geq k+(k-1)i-1$, with initial conditions:
$$L^{i}_{r,s}(k,n)= \left\{ \begin{array}{l} 
F^{i}_{r,s}(k,n), \mbox{ for } n=-1,0, \ldots, k-2 \\
F^{i}_{r,s}(k,n)-r^{\lfloor \frac{n+1}{i}\rfloor}, \mbox{ for } n=k-1, \ldots, k+(k-1)i-2.
\end{array} \right.$$

The Fibonacci sequence can be obtained directly from $F^{i}_{r,s}(k,n)$ in two different ways: either by taking $r=s=i=1$ and $k=2$ by taking $r=s=k=1$ and $i=2$, i.e., $F^{1}_{1,1}(2,n)=F^{2}_{1,1}(1,n)=F_{n+2}$ for $n\geq-1$. We also note that $L^{1}_{1,1}(2,n)=L_n$, for $n\geq2$, the $n$th Lucas number. Table \ref{tab1} exhibits some other well-known sequences that can be derived from $F^{i}_{r,s}(k,n)$ and $L^{i}_{r,s}(k,n)$.

\begin{table}[h!]
	\scriptsize
	\centering
	\begin{tabular}{|c|c|c|c|l|l|l|}
		\hline
		$i$ & $k$ & $r$ & $s$ & \textit{Recurrences for} $F^{i}_{r,s}(k,n)$ \textit{or} $L^{i}_{r,s}(k,n)$ & \textit{Initial conditions} & \textit{References}\\ \hline \hline
		1 & 2 & 1 & 1 & \pbox{20cm}{$F^{1}_{1,1}(2,n) =$ \\ $F^{1}_{1,1}(2,n-1)+F^{1}_{1,1}(2,n-2), n \geq 1$} & \pbox{20cm}{$F^{1}_{1,1}(2,-1)=1$ \\ $ F^{1}_{1,1}(2,0)=1$}  & \pbox{20cm}{$F_n$, \textit{Fibonacci} \\ {oeis.org/A000045}} \\ \hline
		1 & 2 & 1 & 1 & \pbox{20cm}{$L^{1}_{1,1}(2,n) =$ \\ $L^{1}_{1,1}(2,n-1)+L^{1}_{1,1}(2,n-2), n \geq 3$} & \pbox{20cm}{$L^{1}_{1,1}(2,1)=1$ \\ $ L^{1}_{1,1}(2,2)=3$}  & \pbox{20cm}{$L_n$, \textit{Lucas} \\ {oeis.org/A000032}} \\ \hline
		1 & 2 & 2 & 1 & \pbox{20cm}{$F^{1}_{2,1}(2,n) =$ \\ $2F^{1}_{2,1}(2,n-1)+F^{1}_{2,1}(2,n-2), n \geq 1$} & \pbox{20cm}{$F^{1}_{2,1}(2,-1)=1$ \\ $ F^{1}_{2,1}(2,0)=2$}  & \pbox{20cm}{$P_n$, \textit{Pell} \\ {oeis.org/A000129}} \\ \hline
		1 & 2 & 1 & 2 & \pbox{20cm}{$F^{1}_{1,2}(2,n) =$ \\ $F^{1}_{1,2}(2,n-1)+ 2F^{1}_{1,2}(2,n-2), n \geq 1$} & \pbox{20cm}{$F^{1}_{1,2}(2,-1)=1$ \\ $ F^{1}_{1,2}(2,0)=1$} & \pbox{20cm}{$J_n$, \textit{Jacobsthal} \\ {oeis.org/A001045}} \\ \hline
		2 & 3 & 1 & 1 & \pbox{20cm}{$F^{2}_{1,1}(3,n) =$ \\ $F^{2}_{1,1}(3,n-2)+F^{2}_{1,1}(3,n-3), n \geq 2$} & \pbox{20cm}{$F^{2}_{1,1}(3,n)=1$ \\ $n=-1, 0, 1$}  & \pbox{20cm}{$Pv(n)$, \textit{Padovan} \\ {oeis.org/A000931}} \\ \hline
		1 & 3 & 1 & 1 & \pbox{20cm}{$F^{1}_{1,1}(3,n) =$ \\ $F^{1}_{1,1}(3,n-1)+F^{1}_{1,1}(3,n-3), n \geq 2$} & \pbox{20cm}{$F^{1}_{1,1}(3,n)=1$ \\ $n=-1, 0, 1$}  & \pbox{20cm}{$u_n$, \textit{Narayana} \\ {oeis.org/A000930}, \\ \cite{flaut2013generalized}} \\ \hline
		1 & 4 & 1 & 1 & \pbox{20cm}{$F^{1}_{1,1}(4,n) =$ \\ $F^{1}_{1,1}(4,n-1)+F^{1}_{1,1}(4,n-4), n \geq 3$} & \pbox{20cm}{$F^{1}_{1,1}(4,n)=1$ \\ $n=-1,0,1,2$}  & {oeis.org/A003269} \\ \hline
		1 & 2 & $a$ & $b$ & \pbox{20cm}{$F^{1}_{a,b}(2,n) =$ \\ $aF^{1}_{a,b}(2,n-1)+bF^{1}_{a,b}(2,n-2), n \geq 1$} & \pbox{20cm}{$F^{1}_{a,b}(2,-1)=1$ \\ $F^{1}_{a,b}(2,0)=a$} & \pbox{20cm}{$p_{n}^{a,b}$, \\ \cite{benjamin2008tiling}} \\       
		\hline
		1 & 2 & $s$ & $t$ & \pbox{20cm}{$F^{1}_{s,t}(2,n) =$ \\ $sF^{1}_{s,t}(2,n-1)+tF^{1}_{s,t}(2,n-2), n \geq 1$} & \pbox{20cm}{$F^{1}_{s,t}(2,-1)=1$ \\ $F^{1}_{s,t}(2,0)=s$} & $\{n\}_{s,t}$, \cite{amdeberhan2014generalized} \\ \hline
		1 & $k$ & 1 & $t$ & \pbox{20cm}{$F^{1}_{1,t}(k,n) =$ \\ $F^{1}_{1,t}(k,n-1)+tF^{1}_{1,t}(k,n-k), n \geq k-1$} & \pbox{20cm}{$F^{1}_{1,t}(k,n)=1$, \\ $n=-1, \ldots, k\!-\!2$} & $J(k,t,n)$, \cite{szynal2014generalized} \\ \hline
		1 & 2 & $k$ & 1 & \pbox{20cm}{$F^{1}_{k,1}(2,n) =$ \\ $kF^{1}_{k,1}(2,n-1)+F^{1}_{k,1}(2,n-2), n \geq 1$} & \pbox{20cm}{$F^{1}_{k,1}(2,-1)=1$ \\ $F^{1}_{k,1}(2,0)=k$} & $F_{k,n}$, \cite{falcon2007fibonacci} \\ \hline
		1 & $k$ & 1 & 1 & \pbox{20cm}{$F^{1}_{1,1}(k,n) =$ \\ $F^{1}_{1,1}(k,n-1)+F^{1}_{1,1}(k,n-k), n \geq k-1$} & \pbox{20cm}{$F^{1}_{1,1}(k,n)=1$, \\ $n=-1, \ldots, k-2$} & \pbox{20cm}{$F_{1}(k,n)$, \cite{distance1} \\ $F_{1}(k,n)$ = \\ $F^{i}_{r,s}(k,n\!\!+\!k\!-\!2)$} \\ \hline
		2 & $k$ & 1 & 1 & \pbox{20cm}{$F^{2}_{1,1}(k,n) =$ \\ $F^{2}_{1,1}(k,n-2)+F^{2}_{1,1}(k,n-k), n \geq k-1$} & \pbox{20cm}{$F^{2}_{1,1}(k,n)=1$, \\ $n=-1, \ldots, k-2$} & $F_{2}(k,n)$, \cite{distance2} \\ \hline
		$k\!-\!1$ & $k$ & 1 & 1 & \pbox{20cm}{$F^{k-1}_{1,1}(k,n) =$ \\ $F^{k\!-\!1}_{1,1}(k,n\!-\!k\!+\!1)\!+\!F^{k-1}_{1,1}(k,n\!-\!k), n \geq k-1$} & \pbox{20cm}{$F^{k-1}_{1,1}(k,n)=1$, \\ $n=-1, \ldots, k-2$} & $F_{k-1}(k,n)$, \cite{distance} \\ \hline
		$r$ & 2 & $k$ & 1 & \pbox{20cm}{$F^{r}_{k,1}(2,n) =$ \\ $kF^{r}_{k,1}(2,n-r)+F^{r}_{k,1}(2,n-2), n \geq r-1$} & \pbox{20cm}{$F^{r}_{k,1}(2,n)=1$, \\ $n=-1, \ldots, r-2$} & $F_{k,n}(r)$, \cite{falcon2014generalized} \\ \hline
		1 & 2 & $k$ & 2 & \pbox{20cm}{$F^{1}_{k,1}(2,n) =$ \\ $kF^{1}_{k,1}(2,n-1)+2F^{1}_{k,1}(2,n-2), n \geq 1$} & \pbox{20cm}{$F^{1}_{k,1}(2,-1)=1$ \\ $F^{1}_{k,1}(2,0)=k$} & $J_{k,n}$, \cite{generatingjaco} \\ \hline
		1 & 2 & 2 & $k$ & \pbox{20cm}{$F^{1}_{2,k}(2,n) =$ \\ $2F^{1}_{2,k}(2,n-1)+kF^{1}_{2,k}(2,n-2), n \geq 1$} & \pbox{20cm}{$F^{1}_{2,k}(2,-1)=1$ \\ $F^{1}_{2,k}(2,0)=2$} & $P_{k}(n)$, \cite{catarino2013some} \\ \hline
		1 & c & 1 & 1 & \pbox{20cm}{$F^{1}_{1,1}(c,n) =$ \\ $F^{1}_{1,1}(c,n-1)+F^{1}_{1,1}(c,n-c), n \geq c-1$} & \pbox{20cm}{$F^{1}_{1,1}(c,n)=1$, \\ $n=-1, \ldots, c-2$} & $G_{n}$, \cite{bicknell1996classes} \\ \hline
	\end{tabular}
	\label{tab1}
	\caption{Some sequences generalized by $F^{i}_{r,s}(k,n)$ and $L^{i}_{r,s}(k,n)$}
\end{table}

\begin{remark}
	As we are considering that $i,k, r$, and $s$ are nonnegative integers, the polynomial generalization of the Fibonacci numbers $\{n\}_{s,t}$ presented in \cite{amdeberhan2014generalized} and the two-parameters sequence $p_{n}^{a,b}$ from \cite{benjamin2008tiling} coincide as we set $a=s$ and $b=t$. In Section \ref{Sec3} new identities for $\{n\}_{s,t}$, and so for $p_{n}^{a,b}$, are obtained.
\end{remark}

\begin{remark}
	In order to avoid confusion, the generalizations of the Fibonacci numbers presented in \cite{distance1}, \cite{distance2}, and \cite{distance} are denoted here by $F_{1}(k,n)$, $F_{s}(k,n)$, and $F_{k-1}(k,n)$, respectively.
\end{remark}

We finish this section by presenting a recurrence relation for $L_{r,s}^{i}(k,n)$.

\begin{theorem} Let $k > i$ and $n\geq 2k+(k-1)i-1$ be integers. Then, 
	$$L_{r,s}^{i}(k,n)=rL_{r,s}^{i}(k,n-i))+sL_{r,s}^{i}(k,n-k).$$
	\label{recor}
\end{theorem}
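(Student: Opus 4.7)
The plan is to prove the identity by direct algebraic expansion of both sides using the definition of $L^{i}_{r,s}(k,n)$ in terms of $F^{i}_{r,s}(k,n)$, and then invoking the recurrence for $F$ twice. Since the initial conditions on $L$ for small $n$ are built from those of $F$, and the linear combination commutes with the linear combination defining $L$, the identity should drop out cleanly.

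First, I would abbreviate $a = n - (k-1)i$ and $b = n - k - (k-1)i$, so that by definition $L^{i}_{r,s}(k,n) = r^{k-1}\{(k-1)sF^{i}_{r,s}(k,b) + F^{i}_{r,s}(k,a)\}$. I would then apply the defining formula to $L^{i}_{r,s}(k,n-i)$ (replacing $n$ by $n-i$, so $a$ becomes $a-i$ and $b$ becomes $b-i$) and to $L^{i}_{r,s}(k,n-k)$ (so $a$ becomes $a-k$ and $b$ becomes $b-k$). Forming the combination $rL^{i}_{r,s}(k,n-i) + sL^{i}_{r,s}(k,n-k)$ and pulling the common factor $r^{k-1}$ outside yields
\begin{equation*}
r^{k-1}\Bigl\{(k-1)s\bigl[rF^{i}_{r,s}(k,b-i)+sF^{i}_{r,s}(k,b-k)\bigr]+\bigl[rF^{i}_{r,s}(k,a-i)+sF^{i}_{r,s}(k,a-k)\bigr]\Bigr\}.
\end{equation*}

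The key step is to recognize that each bracketed expression is precisely an instance of the recurrence (\ref{23.1}) for $F^{i}_{r,s}(k,\cdot)$: the first collapses to $F^{i}_{r,s}(k,b)$ and the second to $F^{i}_{r,s}(k,a)$. Substituting back gives $r^{k-1}\{(k-1)sF^{i}_{r,s}(k,b)+F^{i}_{r,s}(k,a)\} = L^{i}_{r,s}(k,n)$, as required.

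The only real thing to check, which I expect to be the main (albeit routine) obstacle, is that the recurrence for $F^{i}_{r,s}(k,\cdot)$ can legitimately be applied at the arguments $a$ and $b$, i.e. that $a-i,a-k,b-i,b-k$ all lie in the range where (\ref{23.1}) holds. Since $k>i$, we have $\max\{k,i\}=k$, so we need $b-k = n-2k-(k-1)i \geq k-1-k=-1$, equivalently $n\geq 2k+(k-1)i-1$, which is exactly the hypothesis; the other three indices are automatically larger. With that bound verified, the computation above is valid and the theorem follows.
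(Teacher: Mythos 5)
Your argument is correct and is essentially the paper's own proof run in the opposite direction: the paper expands $L^{i}_{r,s}(k,n)$ via its definition, applies the recurrence \eqref{23.1} to $F^{i}_{r,s}(k,n-(k-1)i)$ and $F^{i}_{r,s}(k,n-k-(k-1)i)$, and regroups the four terms into $rL^{i}_{r,s}(k,n-i)+sL^{i}_{r,s}(k,n-k)$, which is exactly your computation read backwards. Your range verification is also the right one in substance (the precise requirement is that the recurrence applies at the indices $a=n-(k-1)i$ and $b=n-k-(k-1)i$ themselves, i.e. $b\geq k-1$, which is numerically the same as your condition $b-k\geq -1$ and is exactly the hypothesis $n\geq 2k+(k-1)i-1$), so the proof stands.
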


\begin{proof} Using \eqref{23.1} and the definition of $L_{r,s}^{i}(k,n)$, we have 
	$$\begin{array}{rcl}
	L_{r,s}^{i}(k,n) & = & (k-1)r^{k-1}sF_{r,s}^{i}(k,n-k-(k-1)i)+r^{k-1}F_{r,s}^{i}(k,n-(k-1)i) \\ & = & (k-1)r^{k}sF_{r,s}^{i}(k,n-k-(k-1)i-i) \\ & & +(k-1)r^{k-1}s^{2}F_{r,s}^{i}(k,n-k-(k-1)i-k) \\ & & +r^{k}F_{r,s}^{i}(k,n-(k-1)i-i)  +r^{k-1}sF_{r,s}^{i}(k,n-(k-1)i-k), \end{array}$$
	from where
	$$\begin{array}{rcl}
	L_{r,s}^{i}(k,n) & = & (k-1)r^{k}sF_{r,s}^{i}(k,n-k-ki)  +(k-1)r^{k-1}s^{2}F_{r,s}^{i}(k,n-2k-ki+i) \\ & & +r^{k}F_{r,s}^{i}(k,n-ki)  +r^{k-1}sF_{r,s}^{i}(k,n-ki+i-k). \end{array}$$
	and, then,
	$$\begin{array}{rcl}
	L_{r,s}^{i}(k,n) & = & (k-1)r^{k}sF_{r,s}^{i}(k,n-i-(k(i+1)-i)) \\ & & +r^{k}F_{r,s}^{i}(k,n-i-i(k-1)) \\ & & +(k-1)r^{k-1}s^{2}F_{r,s}^{i}(k,n-k-(k(i+1)-i)) \\ & & +r^{k-1}sF_{r,s}^{i}(k,n-k-(k-1)i) \\ & = & rL_{r,s}^{i}(k,n-i)+sL_{r,s}^{i}(k,n-k).
	\end{array}$$
\end{proof}

When $i=r=s=1$ and $k=2$, we get the recurrence relation defining the Lucas number: $L_n=L_{n-1}+L_{n-2}$.

\section{A combinatorial approach to $F^{i}_{r,s}(k,n)$ and $L^{i}_{r,s}(k,n)$}
\label{sec-citacao}

In this section, we give three combinatorial interpretations to the numbers $F^{i}_{r,s}(k,n)$ and $L^{i}_{r,s}(k,n)$. The well-known Benjamin and Quinn's \cite{benjamin2003} interpretation of the Fibonacci numbers in terms of tilings as well as the one studied in \cite{robson} are particular cases of one of our combinatorial interpretations. The combinatorial interpretation we provide for $L^{i}_{r,s}(k,n)$ in terms of tilings results, by taking $r=s=i=1$ and $k=2$, in a new combinatorial interpretation to the Lucas numbers. We also present a closed formula to $F^{i}_{r,s}(k,n)$ and the generating functions for $F^{i}_{r,s}(k,n)$ and $L^{i}_{r,s}(k,n)$.

\subsection{Counting tilings}
\label{sec2.1}

In order to present our combinatorial interpretation in terms of tilings, we consider $1 \times (n+1)$ boards, called $(n+1)$-boards, where, as usual, the first position is the leftmost square while the rightmost square occupies the $(n+1)$th position (see Figure \ref{fig1}).

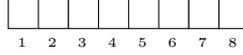
\begin{figure}[h!]
	\centering
	\begin{tikzpicture}[scale=0.8][rounded corners, ultra thick]
	\draw (-0.25,-0.75);
	\shade[top color=white,bottom color=white, draw=black] (0,-1.5) rectangle +(0.5,0.5) node[pos=-0.001,below right] {\tiny{$1$}};
	\shade[top color=white,bottom color=white, draw=black] (0.5,-1.5) rectangle +(0.5,0.5)node[pos=-0.001,below right] {\tiny{$2$}};
	\shade[top color=white,bottom color=white, draw=black] (1.0,-1.5) rectangle +(0.5,0.5)node[pos=-0.001,below right] {\tiny{$3$}};
	\shade[top color=white,bottom color=white, draw=black] (1.5,-1.5) rectangle +(0.5,0.5)node[pos=-0.001,below right] {\tiny{$4$}};
	\shade[top color=white,bottom color=white, draw=black] (2.0,-1.5) rectangle +(0.5,0.5)node[pos=-0.001,below right] {\tiny{$5$}};
	\shade[top color=white,bottom color=white, draw=black] (2.5,-1.5) rectangle +(0.5,0.5)node[pos=-0.001,below right] {\tiny{$6$}};
	\shade[top color=white,bottom color=white, draw=black] (3.0,-1.5) rectangle +(0.5,0.5)node[pos=-0.001,below right] {\tiny{$7$}};
	\shade[top color=white,bottom color=white, draw=black] (3.5,-1.5) rectangle +(0.5,0.5)node[pos=-0.001,below right] {\tiny{$8$}};
	\end{tikzpicture}
	\caption{An $8$-board}
	\label{fig1}	
\end{figure}


The pieces we are going to use in order to tile our boards are: $1\times 1$ black squares, $1\times i$ rectangles ($i$-rectangles), and $1\times k$ rectangles ($k$-rectangles). The $i$-rectangles and the $k$-rectangles may be colored with one of $r$ and $s$ colors, respectively.

Let $f^{i}_{r,s}(k,n)$ be the number of tilings of an $(n+1)$-board using the mentioned pieces such that black squares can only appear in the first $\min \lbrace i,k \rbrace -1$ positions. Let $n < \max \lbrace k,i \rbrace -1$. If $i \leq k$, then $-1\leq n \leq k-2$ and therefore, in this case, there do not exist tilings containing $k$-rectangles. As we can insert $\lfloor\frac{n+1}{i}\rfloor$ $i$-rectangles in this board and each  $i$-rectangle may be colored in $r$ different ways, we have $r^{\lfloor\frac{n+1}{i}\rfloor}$ tilings, i.e., $f^{i}_{r,s}(k,n)=r^{\lfloor\frac{n+1}{i}\rfloor}$. Analogously, if $k<i$ we have $f^{i}_{r,s}(k,n)= s^{\lfloor\frac{n+1}{k}\rfloor}$. If $n\geq \max \lbrace k,i \rbrace-1$, all tilings enumerated by $f^{i}_{r,s}(k,n)$ end either with an $i$-rectangle or a $k$-rectangle. Removing this last piece we are left either with $rf^{i}_{r,s}(k,n-i)$ or $sf^{i}_{r,s}(k,n-k)$ tilings, according to this last piece being an $i$-rectangle or a $k$-rectangle, respectively. Hence, $f^{i}_{r,s}(k,n)=rf^{i}_{r,s}(k,n-i)+sf^{i}_{r,s}(k,n-k)$.

As $f^{i}_{r,s}(k,n)$ and $F^{i}_{r,s}(k,n)$ have the same recurrence relation and share the same initial conditions, we have proved the following theorem which gives us a combinatorial interpretation for $F^{i}_{r,s}(k,n)$.

\begin{theorem}	
	The number of tilings of an $(n+1)$-board using $1 \times 1$ black squares, $i$-rectangles with one of $r$ colors and $k$-rectangles with one of $s$ colors, such that the black squares can only appear in the first $\min \lbrace i,k \rbrace -1$ positions, is equal to $F^{i}_{r,s}(k,n)$.
	\label{Interpret1}
\end{theorem}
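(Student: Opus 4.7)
My plan is to introduce an auxiliary quantity $f^{i}_{r,s}(k,n)$ counting the tilings described in the statement, and then show $f^{i}_{r,s}(k,n) = F^{i}_{r,s}(k,n)$ by verifying that $f$ satisfies both the initial conditions and the recurrence of \eqref{23.1}. The verification splits into a direct enumeration for $-1 \leq n \leq \max\{i,k\}-2$ and a last-tile decomposition for $n \geq \max\{i,k\}-1$, after which strong induction on $n$ closes the argument.

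For the small-$n$ range I would treat the two cases $i \leq k$ and $k < i$ symmetrically. Take $i \leq k$; then the board has at most $k-1$ cells, so no $k$-rectangle fits. Writing $n+1 = qi + \rho$ with $q = \lfloor (n+1)/i \rfloor$ and $0 \leq \rho < i$, I claim every valid tiling consists of $\rho$ black squares followed by $q$ consecutive $i$-rectangles. Indeed, the first $i$-rectangle, occupying some range $[a, a+i-1]$, can only be preceded by black squares, which must sit in positions $1,\dots,a-1$; any black square lying to its right would occupy a position $\geq a + i > i - 1 = \min\{i,k\}-1$, violating the placement restriction. Hence $a - 1 = \rho$ and the layout is forced. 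Only the coloring varies, giving $r^{q}$ tilings, matching the initial value. The case $k < i$ is handled identically and yields $s^{\lfloor (n+1)/k \rfloor}$.

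For the inductive range $n \geq \max\{i,k\}-1$, I would condition on the tile covering position $n+1$. Since $n+1 > \min\{i,k\}-1$, this tile cannot be a black square and must be either an $i$-rectangle (with one of $r$ colors) or a $k$-rectangle (with one of $s$ colors). Removing it leaves a valid tiling of an $(n-i+1)$-board or an $(n-k+1)$-board, respectively, still respecting the black-square restriction; conversely any such sub-tiling extends uniquely by appending a rectangle. This yields
\[
f^{i}_{r,s}(k,n) = r\, f^{i}_{r,s}(k,n-i) + s\, f^{i}_{r,s}(k,n-k),
\]
which is exactly the recurrence in \eqref{23.1}.

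The only real subtlety is the rigidity in the base case: one might initially expect the $\rho$ black squares to be distributable freely among the first $\min\{i,k\}-1$ cells, contributing a binomial factor, but the combination of the black-square placement rule and the need to tile the whole board with rectangles of length $i$ (or $k$) forces the black squares into the leftmost positions. Once that observation is in place, the remainder is a standard ``remove the rightmost tile'' decomposition, so I do not anticipate any technical obstacle beyond stating the base-case reasoning cleanly.
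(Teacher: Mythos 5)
Your proposal is correct and follows essentially the same route as the paper: define the tiling count $f^{i}_{r,s}(k,n)$, verify the initial values $r^{\lfloor(n+1)/i\rfloor}$ (resp. $s^{\lfloor(n+1)/k\rfloor}$) for small $n$, and obtain the recurrence by removing the last rectangle, concluding that $f$ and $F$ coincide. Your extra observation that the black squares are forced into the leftmost cells is a welcome clarification of a step the paper states only briefly, but it does not change the argument.
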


\begin{example}
	Figure \ref{fig2} exhibits all tilings of an $8$-board under the conditions of Theorem \ref{Interpret1}, where $i=2$, $k=3$, and $r=s=1$. We emphasize that due to the restrictions we can have at most one black in the first position. In this case $F^{2}_{1,1}(3,7)=7$.
\end{example}

\begin{figure}[h!]
	\centering
	\begin{tikzpicture}[scale=0.8][rounded corners, ultra thick]
	
	\draw (-0.25,-0.75) ;
	\shade[top color=white,bottom color=white, draw=black] (0,-1) rectangle +(1.5,0.5);
	\shade[top color=white,bottom color=white, draw=black] (1.0,-1) rectangle +(1.5,0.5);
	\shade[top color=white,bottom color=white, draw=black] (2.0,-1) rectangle +(1.5,0.5);
	\shade[top color=white,bottom color=white, draw=black] (3.0,-1) rectangle +(1.0,0.5);
	\draw (-0.25,-0.75) ;
	\shade[top color=black,bottom color=black, draw=black] (0,-2) rectangle +(0.5,0.5);	
	\shade[top color=white,bottom color=white, draw=black] (0.5,-2) rectangle +(1.5,0.5);
	\shade[top color=white,bottom color=white, draw=black] (1.5,-2) rectangle +(1.0,0.5);
	\shade[top color=gray,bottom color=gray, draw=black] (2.5,-2) rectangle +(1.5,0.5);	
	\shade[top color=black,bottom color=black, draw=black] (0,-3) rectangle +(0.5,0.5);	
	\shade[top color=white,bottom color=white, draw=black] (0.5,-3) rectangle +(1.5,0.5);
	\shade[top color=gray,bottom color=gray, draw=black] (1.5,-3) rectangle +(1.5,0.5);
	\shade[top color=white,bottom color=white, draw=black] (3.0,-3) rectangle +(1.0,0.5);
	\shade[top color=black,bottom color=black, draw=black] (0,-4) rectangle +(0.5,0.5);	
	\shade[top color=gray,bottom color=gray, draw=black] (0.5,-4) rectangle +(1.5,0.5);
	\shade[top color=white,bottom color=white, draw=black] (2.0,-4) rectangle +(1.5,0.5);
	\shade[top color=white,bottom color=white, draw=black] (3.0,-4) rectangle +(1.0,0.5);		
	\shade[top color=white,bottom color=white, draw=black] (0,-5) rectangle +(1.5,0.5);
	\shade[top color=gray,bottom color=gray, draw=black] (1.0,-5) rectangle +(1.5,0.5);
	\shade[top color=gray,bottom color=gray, draw=black] (2.5,-5) rectangle +(1.5,0.5);
	\shade[top color=gray,bottom color=gray, draw=black] (0,-6) rectangle +(1.5,0.5);
	\shade[top color=white,bottom color=white, draw=black] (1.5,-6) rectangle +(1.5,0.5);
	\shade[top color=gray,bottom color=gray, draw=black] (2.5,-6) rectangle +(1.5,0.5);
	\shade[top color=gray,bottom color=gray, draw=black] (0,-7) rectangle +(1.5,0.5);
	\shade[top color=gray,bottom color=gray, draw=black] (1.5,-7) rectangle +(1.5,0.5);
	\shade[top color=white,bottom color=white, draw=black] (3.0,-7) rectangle +(1.0,0.5);
	\end{tikzpicture}
	\caption{The seven possible tilings enumerated by $F^{2}_{1,1}(3,7)$}
	\label{fig2}	
\end{figure}
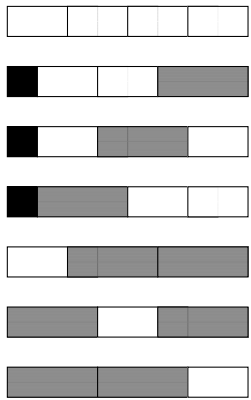 


\

The numbers $L^{i}_{r,s}(k,n)$ can now be interpreted as the number of those tilings described in Theorem \ref{Interpret1} having an additional conditions on the last pieces:
\begin{itemize}
	\item if $k-1\leq n\leq k+(k-1)i -2$, there exists at least one $k$-rectangle in the tiling,
	\item if $n\geq k+(k-1)i-1$, there are at least $k-1$ $i$-rectangles among the last $k$ pieces.
\end{itemize}
Indeed, when $n\geq k+(k-1)i-1$, the tilings ending with $k-1$ $i$-rectangles are enumerated by $r^{k-1}F^{i}_{r,s}(k,n-(k-1)i)$, while those whose last $k$ pieces contain a $k$-rectangle, between two or after all $k-1$ $i$-rectangles, are counted by $(k-1)r^{k-1}sF^{i}_{r,s}(k,n-k-(k-1)i)$, since there are $(k-1)$ possible positions to place the $k$-rectangle. If $k-1\leq n\leq k+(k-1)i-2$, it is easy to see that the number of tilings is given by $F^{i}_{r,s}(k,n)-r^{\lfloor \frac{n+1}{i}\rfloor}$, since exactly  $r^{\lfloor \frac{n+1}{i}\rfloor}$ tilings do not have any $k$-rectangle.



By taking $r=s=i=1$ and $k=2$ in $F^{i}_{r,s}(k,n)$ we obtains the well-known Benjamin and Quinn's \cite{benjamin2003} combinatorial interpretation of the Fibonacci numbers. In Chapter 2 of \cite{benjamin2003} the authors also presented an interpretation for the $n$th Lucas number, $L_n$, in terms of circular $n$-boards. The one we showed here for $L^{i}_{r,s}(k,n)$ results in a new combinatorial interpretation for $L_n$.

\subsection{A formula for $F^{i}_{r,s}(k,n)$}

As an application of the combinatorial interpretation discussed above, we present a closed formula for the number $F^{i}_{r,s}(k,n)$ involving its four parameters. As one can verify, the known formulas for $F_n$, $P_n$, $J_n$, $Pv(n)$, and $\{n \}_{s,t}$ are derived from our formula by specializing the four parameters. We also present a formula for the Narayana numbers $u_n$.



\begin{theorem} Let $k\geq i$ and $n\geq -1$ be integers. Then
	\begin{equation}
	F^{i}_{r,s}(k,n)=\sum\limits_{j=0}^{\lfloor \frac{n+1}{k} \rfloor}r^{\lfloor \frac{n+1-jk}{i}\rfloor}s^{j} \dbinom{j+\lfloor \frac{n+1-jk}{i} \rfloor}{j}.
	\label{r1}
	\end{equation} 
\end{theorem}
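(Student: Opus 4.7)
The plan is to apply the combinatorial interpretation given by Theorem \ref{Interpret1} and count tilings of the $(n+1)$-board by classifying them according to the number $j$ of $k$-rectangles used. Because $k\ge i$, we have $\min\{i,k\}-1=i-1$, so the only constraint on black squares is that they may lie only in positions $1,\ldots,i-1$.

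The key structural observation is that in any valid tiling, the black squares necessarily form an initial segment $1,2,\ldots,b$ of the board for some $b\in\{0,1,\ldots,i-1\}$. Indeed, the leftmost piece is either a black square or a rectangle; if it is an $i$- or a $k$-rectangle, it covers positions $1,\ldots,i$ (respectively $1,\ldots,k$), both of which contain every cell in $\{1,\ldots,i-1\}$, ruling out any subsequent black square. Iterating this observation, the black squares form a prefix of length $b<i$, after which only $i$- and $k$-rectangles appear.

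Now fix $j$ and let $m$ denote the number of $i$-rectangles in the tiling. The length equation $b+im+jk=n+1$ together with $0\le b\le i-1$ uniquely determines
$$m=\left\lfloor\frac{n+1-jk}{i}\right\rfloor\quad\text{and}\quad b=(n+1-jk)-im,$$
while the nonnegativity condition $n+1-jk\ge 0$ forces the summation range $0\le j\le\lfloor(n+1)/k\rfloor$. After fixing the prefix of $b$ black squares, the remainder of the tiling is an arbitrary ordering of $m$ (colorless) slots for $i$-rectangles and $j$ slots for $k$-rectangles, giving $\binom{m+j}{j}$ arrangements; each $i$-rectangle slot independently carries one of $r$ colors and each $k$-rectangle slot one of $s$ colors, contributing a factor $r^{m}s^{j}$. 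Summing the products $r^{\lfloor (n+1-jk)/i\rfloor}s^{j}\binom{j+\lfloor(n+1-jk)/i\rfloor}{j}$ over $j$ produces exactly the right-hand side of \eqref{r1}.

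The argument is largely bookkeeping once the prefix structure in the second paragraph is in place; I do not anticipate any substantive obstacle, though the boundary cases $n=-1$ (the empty board, where only $j=0$ contributes, with value $1$) and $j=\lfloor(n+1)/k\rfloor$ (where $m$ or $b$ may vanish) should be checked to confirm the floor expressions behave as expected.
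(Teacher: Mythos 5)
Your proof is correct and follows essentially the same route as the paper's: invoke the tiling interpretation of Theorem \ref{Interpret1}, classify tilings by the number $j$ of $k$-rectangles, note that $j$ then forces $\lfloor\frac{n+1-jk}{i}\rfloor$ $i$-rectangles, and count arrangements and colorings as $r^{\lfloor\frac{n+1-jk}{i}\rfloor}s^{j}\binom{j+\lfloor\frac{n+1-jk}{i}\rfloor}{j}$. Your explicit justification that the black squares must form a prefix of length less than $i$ (and hence are uniquely determined) is a detail the paper leaves implicit, but it does not change the argument.
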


\begin{proof} We initially consider the case $n\geq k-1$. Let $j$ and $t$ be the number of $k$-rectangles and $i$-rectangles, respectively, in a tiling enumerated by $F^{i}_{r,s}(k,n)$. We have $0\leq j\leq \lfloor \frac{n+1}{k}\rfloor$, $0\leq t\leq \lfloor \frac{n+1}{i}\rfloor$, and, for each $j$, we also have $t=\lfloor\frac{n+1-jk}{i}\rfloor$. Considering the possible $s$ colors to the $k$-rectangles and $r$ colors to the $i$-rectangles, we conclude that the number of tilings having $j$ $k$-rectangles counted by $F^{i}_{r,s}(k,n)$ is equal to 
	$$r^{t}s^{j} \binom{j+t}{j}=r^{\lfloor\frac{n+1-jk}{i}\rfloor}s^{j}\dbinom{j+\lfloor \frac{n+1-jk}{i} \rfloor}{j}.$$ 
	Adding up relatively to the values of $j$, it follows that the number of tilings is given by the r.h.s. of \eqref{r1}. 
	
	When $n<k-1$, we have $\lfloor\frac{n+1}{k}\rfloor=0$ and, hence, 
	$$\sum\limits_{j=0}^{\lfloor \frac{n+1}{k} \rfloor}r^{\lfloor \frac{n+1-jk}{i}\rfloor}s^{j}\binom{j+\lfloor \frac{n+1-jk}{i} \rfloor}{j}=r^{\lfloor \frac{n+1}{i}\rfloor}.$$  
\end{proof} 

Besides the known formulas to the numbers $F_n$, $P_n$, $J_n$, $Pv(n)$, and $\{n \}_{s,t}$, we obtain an expression to calculate the $n$th Narayana number $u_n$ in the next corollary.



\begin{corollary}
	$$u_n= \sum \limits _{j=0}^{\lfloor \frac{n}{3}\rfloor} \dbinom{n-2j}{j}, \forall n\geq1.$$ 
\end{corollary}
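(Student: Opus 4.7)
The plan is to specialize the main formula of the preceding theorem at the parameter values corresponding to the Narayana numbers, after accounting for an index shift between the sequence $u_n$ and the family $F^{i}_{r,s}(k,n)$.

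First I would record, from the row of Table \ref{tab1} corresponding to the Narayana numbers, that $u_n$ arises from $i=1$, $k=3$, $r=s=1$, with $F^{1}_{1,1}(3,n)=1$ for $n=-1,0,1$ and the recurrence $F^{1}_{1,1}(3,n)=F^{1}_{1,1}(3,n-1)+F^{1}_{1,1}(3,n-3)$ for $n\geq 2$. Comparing initial values against the standard Narayana sequence $u_0=u_1=u_2=1$, $u_3=2$, $u_4=3$, $u_5=4$, $u_6=6,\dots$ (which satisfies $u_n=u_{n-1}+u_{n-3}$), one checks that the two recurrences coincide and the initial segments match under the shift $u_n=F^{1}_{1,1}(3,n-1)$ for all $n\geq 1$. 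This identification is what converts the closed form into a statement about $u_n$.

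Next I would apply the formula \eqref{r1} of the theorem with $(i,k,r,s)=(1,3,1,1)$ and with $n$ replaced by $n-1$. Since $r=s=1$, every power $r^{\lfloor(n-3j)/1\rfloor}s^{j}$ collapses to $1$, and the floor $\lfloor (n-1+1-jk)/i\rfloor=\lfloor (n-3j)/1\rfloor=n-3j$ since $i=1$. The binomial coefficient therefore simplifies to
\[
\binom{j+\lfloor (n-3j)/1\rfloor}{j}=\binom{j+(n-3j)}{j}=\binom{n-2j}{j},
\]
and the summation range $0\leq j\leq \lfloor(n-1+1)/k\rfloor=\lfloor n/3\rfloor$ is exactly the one appearing in the corollary. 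Substituting these simplifications into \eqref{r1} yields
\[
u_n=F^{1}_{1,1}(3,n-1)=\sum_{j=0}^{\lfloor n/3\rfloor}\binom{n-2j}{j},
\]
which is the desired identity.

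There is no real obstacle here: the corollary is a direct specialization of the preceding theorem, and the only point that merits a careful sentence is the index shift $u_n=F^{1}_{1,1}(3,n-1)$, which follows by matching the three initial values and observing that both sequences obey the same order-three recurrence. The case $n=1,2$ can be treated as base cases (or just verified against the convention $\lfloor n/3\rfloor=0$, giving $\binom{n}{0}=1=u_n$), so no separate argument is needed for small $n$.
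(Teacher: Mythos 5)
Your proof is correct and is essentially the argument the paper intends: the corollary is presented as an immediate specialization of the closed formula \eqref{r1} at $(i,k,r,s)=(1,3,1,1)$, and your identification $u_n=F^{1}_{1,1}(3,n-1)$ correctly supplies the index shift that the paper leaves implicit. Nothing further is needed.
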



\subsection{Two other combinatorial interpretations}

Let $k > i \geq 1$ and $n \geq i-1$ be integers. Let $X=\lbrace 1,2,\ldots,n+1 \rbrace$ and $\Im = \lbrace Y_t;t \in T\rbrace$ be a family of subsets of $X$ such that each $Y_t$, $t \in T$ contain consecutive integers and satisfies the following conditions:
\begin{enumerate}
	\item $|Y_{t}|\in \lbrace i,k \rbrace$,
	\item if $|Y_t|=i$, then the elements of $Y_t$ may be colored with one of $r$ different colors,
	\item if $|Y_t|=k$, then the elements of $Y_t$ may be colored with one of $s$ different colors,
	\item $|X \setminus \bigcup_{t\in T} Y_{t}|\leq i-1$,
	\item if $x \in X \setminus \bigcup_{t\in T} Y_{t}$, then $x\in \lbrace n+1,n,\ldots, n+3-i \rbrace$.
\end{enumerate}

The family $\Im$ is called \textit{color decomposition and remainder at most $i-1$ of $X$}. The set decomposition related to the Fibonacci sequence, see \cite{distance}, follows from this by taking $r=s=i=1$ and $k=2$. 

\begin{theorem}
	Let $k>i \geq 1$ and $n\geq i-1$ be integers. Then, the number of color decompositions with remainder at most $i-1$ of $X$ is equal to $F^{i}_{r,s}(k,n)$.
\end{theorem}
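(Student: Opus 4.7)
I would prove the theorem by exhibiting an explicit bijection between color decompositions with remainder at most $i-1$ of $X=\{1,2,\ldots,n+1\}$ and the tilings of the $(n+1)$-board counted by Theorem \ref{Interpret1}. Since $k>i$, that theorem restricts $1\times 1$ black squares to the first $\min\{i,k\}-1 = i-1$ positions, whereas the decomposition keeps its uncovered (remainder) elements in the \emph{last} $i-1$ positions $\{n+3-i,\ldots,n+1\}$. To reconcile the two orientations I would use the order-reversing involution $\sigma\colon X\to X$, $\sigma(j)=n+2-j$, which carries blocks of consecutive integers to blocks of consecutive integers of the same size and sends $\{n+3-i,\ldots,n+1\}$ onto $\{1,2,\ldots,i-1\}$.

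Given a color decomposition $\Im=\{Y_t\}_{t\in T}$, I would form $\sigma(\Im)=\{\sigma(Y_t)\}_{t\in T}$, which by the previous observation is again a family of disjoint blocks of consecutive integers of sizes in $\{i,k\}$, with uncovered elements lying only in $\{1,\ldots,i-1\}$. I would then translate each block of size $i$ (colored with one of $r$ colors by (2)) to an $i$-rectangle in those positions with the same color, each block of size $k$ (colored with one of $s$ colors by (3)) to a $k$-rectangle with the same color, and each uncovered position in $\{1,\ldots,i-1\}$ to a $1\times 1$ black square. Conditions (1)--(3) guarantee that the sizes and color palettes are exactly those appearing in Theorem \ref{Interpret1}, while conditions (4)--(5), after applying $\sigma$, become precisely the constraint that black squares occur only in the first $i-1$ positions.

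The construction is clearly reversible: given a tiling of the $(n+1)$-board as in Theorem \ref{Interpret1}, reading each $i$-rectangle (resp. $k$-rectangle) as a subset $Y_t$ of $i$ (resp. $k$) consecutive integers carrying the same color, reading each black square as an uncovered point, and then applying $\sigma$ once more, yields a color decomposition satisfying (1)--(5). Therefore the two sets are in bijection, and Theorem \ref{Interpret1} gives the count $F^{i}_{r,s}(k,n)$.

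The one point that needs genuine verification, rather than routine checking, is that the $\sigma$-image of conditions (4)--(5) is exactly the first-$(i-1)$-positions constraint on black squares; everything else is a straightforward translation of vocabulary. As a cross-check, one can alternatively verify the recurrence $D(n)=rD(n-i)+sD(n-k)$ for the number of color decompositions by splitting on whether $1\in X$ belongs to an $i$-block or a $k$-block (it cannot lie in the remainder because $1\notin\{n+3-i,\ldots,n+1\}$ for $n\geq i-1$), and matching the initial values $D(n)=r^{\lfloor (n+1)/i\rfloor}$ for $-1\leq n\leq k-2$ by noting that in that range no $k$-block fits, forcing the unique packing $\{1,\ldots,i\},\{i+1,\ldots,2i\},\ldots$ with $r$ independent color choices per block.
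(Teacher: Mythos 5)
Your bijective argument is correct, but it is a genuinely different route from the paper's. The paper proves the statement by induction on $n$: it checks the initial range $i-1\leq n<k-1$ directly (only $i$-blocks fit, giving $r^{\lfloor (n+1)/i\rfloor}$), and for the inductive step it splits the decompositions of $\{1,\ldots,n+2\}$ according to whether the block containing the element $1$ is $\{1,\ldots,i\}$ or $\{1,\ldots,k\}$ (the element $1$ cannot lie in the remainder since $n\geq i-1$), obtaining $l(n+1)=rl(n+1-i)+sl(n+1-k)$ and matching recurrence \eqref{23.1}; this is essentially your ``cross-check'' paragraph, so you have in effect reproduced the paper's proof as a secondary verification. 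Your primary argument instead maps decompositions directly onto the tilings of Theorem \ref{Interpret1} via the reversal $\sigma(j)=n+2-j$, and the key point you flag is indeed the only one needing care: blocks go to rectangles of the same size and color, and conditions (4)--(5) translate exactly to the constraint that black squares occupy only the first $i-1$ cells, since $\sigma$ carries $\{n+3-i,\ldots,n+1\}$ onto $\{1,\ldots,i-1\}$ (and the remainder, like the black squares, automatically forms a terminal, respectively initial, run because every block has length at least $i$). What the bijection buys is a structural identification rather than a re-derivation of the recurrence: it makes every tiling-based identity of Section \ref{Sec3} immediately transferable to color decompositions, whereas the paper's inductive argument is shorter and does not presuppose Theorem \ref{Interpret1}. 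One small inaccuracy in your cross-check: the initial-condition range for the decomposition count should be $i-1\leq n\leq k-2$ (the theorem assumes $n\geq i-1$), not $-1\leq n\leq k-2$, though the values agree there anyway.
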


\begin{proof} Let $l(n)$ be the number of color decompositions with remainder at most $i-1$ of the set $X \lbrace 1,2,\ldots,n+1 \rbrace$. If $n<k-1$, then $l(n)=r^{\lfloor\frac{n+1}{i}\rfloor}$, since $|Y_t|=i, \forall t \in T$ and $Y_t$ may have one of the $r$ colors. Thus $l(n)=F^{i}_{r,s}(k,n)$ for $i-1\leq n < k-1$.
	
	Let us now suppose $n\geq k-1$ and $l(n)=F^{i}_{r,s}(k,n)$. We shaw show that  $l(n+1)=F^{i}_{r,s}(k,n+1)$. Let $l_i(n+1)$ be the number of all color decompositions with remainder at most $i-1$ of $X$ such that $\lbrace 1,2,\ldots,i \rbrace \in \Im$ and let $l_k(n+1)$ be the number of those decompositions $\Im$ such that $\lbrace1,2,\ldots, k \rbrace \in \Im$. If $\lbrace 1,2,\ldots,i \rbrace$ belongs to a decomposition of $X$, then $\lbrace1,2,\ldots, k \rbrace$ does not belong to this same decomposition and vice versa. It follows that $l(n+1)=l_i(n+1)+l_k(n+1)$. Besides that, we have $l_i(n+1)=rl(n+1-i)$ and $l_k(n+1)=sl(n+1-k)$. By induction and the recurrence relation \eqref{23.1} defining $F^{i}_{r,s}(k,n)$, we have
	$$\begin{array}{rcl}
	l(n+1) &=& rl(n+1-i)+sl(n+1-k) \\
	&=& rF^{i}_{r,s}(k,n+1-i)+sF^{i}_{r,s}(k,n+1-k)\\
	&=& F^{i}_{r,s}(k,n+1),
	\end{array}$$
	which concludes the proof.
\end{proof} 

\

We now present our third combinatorial interpretation for $F^{i}_{r,s}(k,n)$. Before doing so, we remember a few definitions. The total number of matchings in a graph (called Hosoya index or Z index) was introduced in 1971 \cite{hosoya1971topological} to study properties of organic compounds. Hosoya \cite{hosoya1973topological} also noticed the relation between Hosoya index and Fibonacci and Lucas sequences. Finally, in 1982, Prodinger and Tichy \cite{prodinger1982fibonacci} gave a complete graph interpretation to the Fibonacci sequence by exhibiting connections between the Fibonacci and Lucas numbers and the number of independent sets in some especial graphs. Some recent progress on interpreting  generalizations of the Fibonacci sequence in terms of graphs can be found in \cite{bednarz2015total,distance1,wloch2013some}. 

Let $X=\lbrace 1,2,\ldots, n+1 \rbrace$. If $X$ corresponds to the vertex set of the graph $P_{n+1}$, $n\geq i-1$, then each $Y_t$, $t \in T$ corresponds to a monochromatic subgraph $P_l$, where $l\in \lbrace i,k\rbrace$, i.e., there exist $r$ and $s$ colors available for $P_i$ and $P_k$, respectively. Thus, the color decomposition with rest ao most $i-1$ of $X$ corresponds to a $\lbrace P_k,P_i\rbrace$-matching of $P_n$. Obviously, at most the last $i-1$ vertex does not belong to the $\lbrace P_k,P_i\rbrace$-matching of $P_n$.

\subsection{The generating function for $F^{i}_{r,s}(k,n)$}

In this section we determine the generating function of the sequence ($F^{i}_{r,s}(k,n)$). We point out that our result generalizes those recently obtained to the sequences $k$-Jacobsthal \cite{generatingjaco}, $(k,r)$-Fibonacci numbers  \cite{falcon2014generalized}, the polynomial generalization of the Fibonacci numbers \cite{amdeberhan2014generalized}, and the $k$-Narayana numbers \cite{knarayana}.

\begin{proposition} The generating function of $F^{i}_{r,s}(k,n)$ is given by
	\begin{equation*}
	f^{i}_{r,s}(k,n,x)=
	\left \{
	\begin{array}{ll}
	\frac{1+r^{\lfloor\frac{1}{i}\rfloor}x+\cdots+r^{\lfloor\frac{k-1}{i}\rfloor}x^{k-1}-r(x^i+r^{\lfloor\frac{1}{i}\rfloor}x^{i+1}+\cdots+r^{\lfloor\frac{k-1-i}{i}\rfloor}x^{k-1})}{1-rx^i-sx^k}, & \textnormal{if} \ k>i \\
	\\
	\frac{1+r^{\lfloor\frac{1}{i}\rfloor}x+\cdots+r^{\lfloor\frac{k-1}{i}\rfloor}x^{k-1}}{1-x^k(r+s)}, & \textnormal{if} \ k=i. \\
	\end{array}
	\right.
	\end{equation*}
\end{proposition}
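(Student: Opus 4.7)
The plan is to extract the generating function algebraically from the defining recurrence \eqref{23.1}. Set
$$f(x) := \sum_{n\geq -1} F^{i}_{r,s}(k,n)\, x^{n+1},$$
so that the coefficient of $x^{m}$ is $F^{i}_{r,s}(k,m-1)$. The strategy is standard: multiply $f(x)$ by $1-rx^{i}-sx^{k}$, shift the indices so that each term is indexed by the same power of $x$, and use the recurrence to collapse the infinite tail into zero. What survives is a short polynomial whose coefficients are dictated entirely by the initial conditions $F^{i}_{r,s}(k,n)=r^{\lfloor (n+1)/i\rfloor}$ for $n=-1,0,\dots,k-2$ (valid since we assume $k\geq i$).

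Concretely, after the index shifts $m=n+i$ and $m=n+k$ I would write
$$(1-rx^{i}-sx^{k})f(x)=\sum_{n\geq -1}F^{i}_{r,s}(k,n)x^{n+1}-r\sum_{n\geq i-1}F^{i}_{r,s}(k,n-i)x^{n+1}-s\sum_{n\geq k-1}F^{i}_{r,s}(k,n-k)x^{n+1}.$$
For $n\geq \max\{k,i\}-1=k-1$ the recurrence cancels all three contributions. Splitting into the two cases in the statement, I handle what is left over:

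In the case $k>i$, the residual terms are $\sum_{n=-1}^{k-2}F^{i}_{r,s}(k,n)x^{n+1}$ from the first sum together with $-r\sum_{n=i-1}^{k-2}F^{i}_{r,s}(k,n-i)x^{n+1}$ from the second sum (the third sum has no leftover since it already starts at $n=k-1$). Substituting the initial conditions and reindexing $m=n+1$ (resp.\ $m=n-i+1$) converts these into $\sum_{m=0}^{k-1}r^{\lfloor m/i\rfloor}x^{m}$ and $-r\sum_{m=0}^{k-1-i}r^{\lfloor m/i\rfloor}x^{m+i}$, which is precisely the numerator displayed for $k>i$.

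In the case $k=i$, the two truncation ranges $n\geq i-1$ and $n\geq k-1$ coincide, so the recurrence $F^{i}_{r,s}(k,n)=(r+s)F^{i}_{r,s}(k,n-k)$ eliminates everything from $n=k-1$ onward in a single stroke, and only $\sum_{n=-1}^{k-2}F^{i}_{r,s}(k,n)x^{n+1}$ remains, giving the second formula. The main obstacle, such as it is, is purely bookkeeping: making sure the index shifts match the ranges on which the initial conditions apply (which is why the condition $k\geq i$ is used to identify $\max\{k,i\}-1$ with $k-1$) and that the polynomial obtained from the second sum really terminates at $x^{k-1}$, as claimed in the proposition.
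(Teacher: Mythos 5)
Your proposal is correct and follows essentially the same route as the paper: multiply the series $\sum_{n\geq -1}F^{i}_{r,s}(k,n)x^{n+1}$ by $1-rx^{i}-sx^{k}$, cancel the tail with the recurrence, and read off the numerator from the initial conditions $r^{\lfloor (n+1)/i\rfloor}$, with the $k=i$ case handled by the same cancellation. The only difference is cosmetic bookkeeping (the paper subtracts the shifted series $rx^ig$ and $sx^kg$ explicitly and dismisses $k=i$ as analogous, which you instead spell out).
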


\begin{proof} Let $k>i$. Define $g^{i}_{r,s}(k,n,x)$ as the series $g^{i}_{r,s}(k,n,x)=F^{i}_{r,s}(k,-1)+F^{i}_{r,s}(k,0)x+\cdots+F^{i}_{r,s}(k,n-1)x^n + \cdots$. Then,
	$$\begin{array}{rcl}
	rx^ig^{i}_{r,s}(k,n,x) &=& rF^{i}_{r,s}(k,-1)x^i+\cdots+rF^{i}_{r,s}(k,n-1)x^{n+i} + \cdots  \\
	sx^kg^{i}_{r,s}(k,n,x) &=& sF^{i}_{r,s}(k,-1)x^k+\cdots+sF^{i}_{r,s}(k,n-1)x^{n+k} + \cdots 
	\end{array}$$
	Since $F^{i}_{r,s}(k,n)=rF^{i}_{r,s}(k,n-i)+sF^{i}_{r,s}(k,n-k)$, if $n\geq k-1$, and $F^{i}_{r,s}(k,n)=r^{\lfloor\frac{n+1}{i}\rfloor}$, if $n<k-1$, it follows that $(1-rx^i-sx^k)g^{i}_{r,s}(k,n,x)=1+r^{\lfloor\frac{1}{i}\rfloor}x+\cdots+r^{\lfloor\frac{k-1}{i}\rfloor}x^{k-1}-r(x^i+r^{\lfloor\frac{1}{i}\rfloor}x^{i+1}+\cdots+r^{\lfloor\frac{k-1-i}{i}\rfloor}x^{k-1})$. Thus  $f^{i}_{r,s}(k,n,x)=g^{i}_{r,s}(k,n,x)$. The case $k=i$ is analogous.
\end{proof}

\

\section{Identities involving $F^{i}_{r,s}(k,n)$ and $L^{i}_{r,s}(k,n)$}
\label{Sec3}

This section is devoted to presenting a great number of identities involving $F^{i}_{r,s}(k,n)$ and $L^{i}_{r,s}(k,n)$. Although all these identities could be proved by induction or algebraically, we employ the combinatorial interpretation presented in Section \ref{sec-citacao}. In what follows, we call type $\mathcal{F}$ and type $\mathcal{L}$ the tilings enumerated by $F^{i}_{r,s}(k,n)$ and $L^{i}_{r,s}(k,n)$, respectively.

When we specialize the four parameters $i,k,r$, and $s$ several identities involving those numbers generalized by $F^{i}_{r,s}(k,n)$ and $L^{i}_{r,s}(k,n)$ appear. The new ones will be stated as corollaries while the known ones will just be mentioned after the theorems.


\begin{theorem} Let $i\geq 2$, $k=ti$, and $n\geq 0$ be integers, where $t\geq 1$. Then
	$$F^{i}_{r,s}(k,ni-1)=F^{i}_{r,s}(k,ni).$$
\end{theorem}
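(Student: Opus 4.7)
I would argue by exhibiting an explicit bijection between the two sets of tilings that Theorem~\ref{Interpret1} puts in correspondence with the two sides. Recall that $F^{i}_{r,s}(k,m)$ counts tilings of an $(m+1)$-board using black $1\times 1$ squares (allowed only in the first $\min\{i,k\}-1=i-1$ positions, since $k=ti\geq i$), $i$-rectangles in $r$ colors, and $k$-rectangles in $s$ colors. So the left side counts tilings of an $ni$-board and the right side counts tilings of an $(ni+1)$-board.

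The key divisibility observation is that $k=ti$ forces every $i$-rectangle and every $k$-rectangle to cover a number of cells divisible by $i$. I would then note that in any valid tiling, the black squares (if any) must occupy a prefix of the form $1,2,\ldots,b$ for some $0\leq b\leq i-1$: indeed, the black squares all lie in the first $i-1$ cells, and any rectangle covering a cell in that range has length $\geq i$ and therefore also covers cell $b+1,\ldots$, which rules out any gap between black squares.

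Applying this to the two boards:
\begin{itemize}
\item For the $ni$-board, the non-black portion has length $ni-b$. Since it must be tiled by pieces of length divisible by $i$, we need $i\mid b$, and with $0\leq b\leq i-1$ this forces $b=0$.
\item For the $(ni+1)$-board, the non-black portion has length $ni+1-b$, forcing $b\equiv 1\pmod i$. Because $i\geq 2$ and $0\leq b\leq i-1$, this forces $b=1$.
\end{itemize}
Thus every tiling counted by $F^{i}_{r,s}(k,ni-1)$ uses only $i$- and $k$-rectangles, while every tiling counted by $F^{i}_{r,s}(k,ni)$ begins with a single black square at position $1$ followed by $i$- and $k$-rectangles covering the remaining $ni$ cells.

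The bijection is then immediate: send a tiling of the $ni$-board (of the first type) to the tiling of the $(ni+1)$-board obtained by prepending one black square and shifting every other piece one cell to the right. This is clearly a color-preserving bijection, so the two counts are equal. I do not anticipate any real obstacle — the only subtle point is checking that the black squares must form an initial block and that the edge case $n=0$ behaves correctly (both quantities equal $1$, corresponding to the empty tiling and the single black square respectively, using $i\geq 2$).
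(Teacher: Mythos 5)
Your proposal is correct and follows essentially the same route as the paper: both arguments observe that when $k=ti$ a tiling of the $ni$-board can contain no black square while a tiling of the $(ni+1)$-board contains exactly one (necessarily at position $1$), and both conclude via the bijection that inserts or removes this initial black square. You merely make explicit the divisibility-by-$i$ reasoning that the paper leaves implicit.
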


\begin{proof} We have $F^{i}_{r,s}(k,ni-1)$ type $\mathcal{F}$ tilings of an $ni$-board. Since $k=ti$, there do not exist tilings with a black square. By inserting a black square in the beginning of such tilings, we are left with all those tilings enumerated by $F^{i}_{r,s}(k,ni)$. Reciprocally, each tiling enumerated by $F^{i}_{r,s}(k,ni)$ has exactly one black square, which, after being removed, is turned into a tiling counted by $F^{i}_{r,s}(k,ni-1)$.
\end{proof}

\begin{theorem} Let $n\geq 0$ and $i,k\geq 1$ be integers. Then, 
	\begin{equation}
	F^{i}_{r,s}(k,ni+k)=
	\left \{
	\begin{array}{cc}
	r^{ n+k+1}+\sum\limits_{j=-1}^{n} r^{n-j}sF^{i}_{r,s}(k,j), & \textnormal{if} \ i=1 \\
	u(k,i)+\sum\limits_{j=0}^{n} r^{n-j}sF^{i}_{r,s}(k,ij), & \textnormal{if} \ i\neq 1, \\
	\end{array}
	\right.
	\label{r2}
	\end{equation} where
	\begin{equation*}
	u(k,i)=
	\left \{
	\begin{array}{lll}
	r^{\lfloor \frac{ni+k+1}{i}\rfloor}, & \textnormal{if} \ k\geq i \\
	r^{n+1}, & \textnormal{if} \  i=k+1 \\
	0, & \textnormal{if} \  i\geq k+2. \\
	\end{array}
	\right.
	\end{equation*}
	\label{9.1}
\end{theorem}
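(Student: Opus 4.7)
My plan is to prove the identity combinatorially, using the interpretation from Theorem \ref{Interpret1}: $F^{i}_{r,s}(k, ni+k)$ counts tilings of an $(ni+k+1)$-board with $1\times 1$ black squares (allowed only in the first $\min\{i,k\}-1$ positions), $r$-colored $i$-rectangles, and $s$-colored $k$-rectangles. I partition these tilings according to whether they contain a $k$-rectangle, and if so, the position of the rightmost such piece.

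For tilings with no $k$-rectangle, only $i$-rectangles and initial black squares appear. Since the total length determines the number of black squares uniquely as $(ni+k+1) \bmod i$, with the remaining length filled by $r$-colored $i$-rectangles, I would split into three sub-cases that match the definition of $u(k,i)$: when $k \geq i$ the remainder is at most $i-1=\min\{i,k\}-1$ and the count is $r^{\lfloor (ni+k+1)/i \rfloor}$; when $i = k+1$ the total length equals $(n+1)i$, the remainder is $0$, and there are $r^{n+1}$ tilings with no black squares at all; when $i \geq k+2$ the forced remainder $k+1$ exceeds the allowed $k-1$ and there are no such tilings. For $i=1$ this falls into the first sub-case and yields the constant $r^{n+k+1}$.

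For tilings containing at least one $k$-rectangle, I would condition on the position $[a, a+k-1]$ of the rightmost such rectangle. The post-part, of length $ni - a + 2$, must be tiled exclusively by $r$-colored $i$-rectangles, which forces $a \equiv 2 \pmod{i}$ and contributes $r^{(ni-a+2)/i}$ tilings. The pre-part, of length $a-1$, is tiled freely under the original rules, contributing $F^{i}_{r,s}(k, a-2)$ tilings. Together with the factor $s$ for the color of the rightmost $k$-rectangle, the number of tilings with rightmost $k$-rectangle at $a$ is $s \cdot r^{(ni-a+2)/i} \cdot F^{i}_{r,s}(k, a-2)$.

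Finally, I would parameterize the admissible positions. For $i \geq 2$ the congruence $a \equiv 2 \pmod{i}$ together with $2 \leq a \leq ni+2$ (the value $a=1$ being excluded since $i \nmid 1$) gives $a = ji+2$ for $j=0,1,\ldots,n$, producing the sum $\sum_{j=0}^{n} r^{n-j} s F^{i}_{r,s}(k, ij)$. For $i=1$ the congruence is vacuous, so $a = j+2$ ranges over $j=-1,0,\ldots,n$, which explains the extra $j=-1$ term in the first branch of \eqref{r2}. Adding the count from the no-$k$-rectangle case completes the identity. The main technical point is the three-way case analysis in the no-$k$-rectangle step, especially the delicate boundary $i = k+1$ where the board length happens to be a multiple of $i$; the "last $k$-rectangle" decomposition itself is routine.
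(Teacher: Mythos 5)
Your proposal is correct and follows essentially the same route as the paper: interpret $F^{i}_{r,s}(k,ni+k)$ as tilings of an $(ni+k+1)$-board and split according to the rightmost $k$-rectangle (if any), the $k$-rectangle-free tilings giving $u(k,i)$ and the position $a=ji+2$ of the rightmost $k$-rectangle giving the terms $r^{n-j}sF^{i}_{r,s}(k,ij)$, with the extra $j=-1$ term when $i=1$. Your congruence analysis of the no-$k$-rectangle case (and of the admissible positions $a$) is in fact somewhat more explicit than the paper's, which simply asserts those counts.
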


\begin{proof}	We shall show that the r.h.s. of \eqref{r2} also counts the number of type $\mathcal{F}$ tilings of an $(ni+k+1)$-board. In order to achieve this, we consider the right most appearance of a $k$-rectangle, if any. 
	
	The number of tilings without any $k$-rectangle is equal to either $r^{\lfloor \frac{ni+k+1}{i}\rfloor}$, if $k\geq i$, or $r^{n+1}$, if $i=k+1$, or $0$, if $i\geq k+2$. The remaining tilings end with either a $k$-rectangle or a $k$-rectangle followed by $i$-rectangles. There exist $sF^{i}_{r,s}(k,ni)$ type $\mathcal{F}$ tilings whose last piece is a $k$-rectangle, $rsF^{i}_{r,s}(k,(n-1)i)$ ending with a $k$-rectangle followed by one $i$-rectangle, $r^{2}sF^{i}_{r,s}(k,(n-2)i)$ ending with a $k$-rectangle followed by two $i$-rectangles, $\ldots$ , $r^{n-1}sF^{i}_{r,s}(k,i)$ ending with a $k$-rectangle followed by $(n-1)$ $i$-rectangles and, finally, $r^{n}sF^{i}_{r,s}(k,0)$ ending with a $k$-rectangle followed by $n$ $i$-rectangles. In the case $i=1$, we have one tiling ending with a $k$-rectangle followed by $n+1$  $i$-rectangles. By adding up all these possibilities, we finish the proof. 
\end{proof}

It follows from this theorem some well-known identities (see Table \ref{tab1} for the choice of the parameters) as well as new ones:
$$\centering \begin{array}{lll}
\sum \limits_{j=1}^{n}F_j=F_{n+2}-1 & \sum \limits_{j=0}^{n}F_{2j}=F_{2n+1}-1 & \sum \limits_{j=0}^{n}P_{2j}=\frac{1}{2}(P_{2n+1}\!-\!1) \\
\sum \limits_{j=2}^{n}J_j=\frac{1}{2}(J_{n+2}\!-\!3) & \sum \limits_{j=0}^{n}Pv(2j\!+\!1)=Pv(2n\!+\!4)\!-\!1 & \sum \limits_{j=1}^{n}u_j=u_{n+3}-1, \mbox{\cite{flaut2013generalized}}.
\end{array}$$


\begin{corollary}  \textit{}
	\begin{enumerate}
		\setlength\itemsep{-0.1cm}
		\item $\sum \limits_{j=1}^{n}2^{-j}J_{2j}=\frac{1}{2^n}(J_{2n+1}-2^n), \forall n \geq 1$,
		\item $\sum \limits_{j=2}^{n}2^{-j}P_{j}=\frac{1}{2^n}(P_{n+2}-2^{n+1}-2^{n-1}), \forall n \geq 2$.
		\item $\{n+2\}_{s,t} = s^{n+1} + \sum \limits_{j=-1}^{n-2}s^{n-2-j}t\{2+j\}_{s,t}, \forall n \geq 0$.  
	\end{enumerate}
\end{corollary}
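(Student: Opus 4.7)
The plan is to obtain each of the three identities as a direct instantiation of Theorem \ref{9.1}, after matching the correct parameters $(i,k,r,s)$ from Table \ref{tab1} and doing a small amount of bookkeeping (index shifts, separating off one boundary term, and dividing through by a power of $2$).

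First I would handle part (3), which is the cleanest. The polynomial generalization satisfies $\{n\}_{s,t}=F^{1}_{s,t}(2,n-2)$ by the entry of Table \ref{tab1}. Applying the $i=1$ branch of Theorem \ref{9.1} with the symbols $r=s$ and $s=t$ and with $n$ replaced by $n-2$ gives
$$F^{1}_{s,t}(2,n)=s^{n+1}+\sum_{j=-1}^{n-2}s^{n-2-j}\,t\,F^{1}_{s,t}(2,j),$$
which, after translating $F^{1}_{s,t}(2,m)=\{m+2\}_{s,t}$ on both sides, is exactly the stated identity.

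Next I would treat part (1). The key observation is that the Jacobsthal recurrence $J_n=J_{n-1}+2J_{n-2}$ admits \emph{two} representations in our family; the useful one here is the $(i,k,r,s)=(2,1,2,1)$ representation, i.e.\ $J_{n+2}=F^{2}_{2,1}(1,n)$ (one verifies this from the initial conditions of \eqref{23.1} in the $k<i$ case). Since $i\neq 1$ and $i=k+1$, Theorem \ref{9.1} applies with $u(k,i)=r^{n+1}=2^{n+1}$, giving
$$F^{2}_{2,1}(1,2n+1)=2^{n+1}+\sum_{j=0}^{n}2^{n-j}F^{2}_{2,1}(1,2j).$$
Translating to Jacobsthal notation yields $J_{2n+3}=2^{n+1}+\sum_{j=0}^{n}2^{n-j}J_{2j+2}$, and then re-indexing $j\mapsto j-1$, dividing by $2^{n+1}$, and setting $m=n+1$ produces the claim.

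Finally, for part (2), I would use the standard Pell representation $P_{n+2}=F^{1}_{2,1}(2,n)$ (so $i=1$, $k=2$, $r=2$, $s=1$) and invoke the $i=1$ branch of Theorem \ref{9.1}:
$$P_{n+4}=2^{n+3}+\sum_{j=-1}^{n}2^{n-j}P_{j+2}.$$
After shifting the summation index to $j'=j+2$ and dividing by $2^{n+2}$ (renaming $n+2$ as $n$) I obtain $\sum_{j=1}^{n}2^{-j}P_j=\tfrac{1}{2^n}(P_{n+2}-2^{n+1})$. The only remaining subtlety is the fact that the desired sum starts at $j=2$: removing the $j=1$ term amounts to subtracting $\tfrac12 P_1=\tfrac12=\tfrac{2^{n-1}}{2^n}$, and combining this with the $-2^{n+1}/2^n$ already present yields exactly $\tfrac{1}{2^n}(P_{n+2}-2^{n+1}-2^{n-1})$. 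The main obstacle, if any, is just keeping the two possible $(i,k,r,s)$-representations straight (for Jacobsthal and Pell, the $(2,1,\cdot,\cdot)$ choice produces an identity summed over even-indexed terms, while $(1,2,\cdot,\cdot)$ produces one over all indices); choosing the right representation for each part is what makes the identities fall out immediately from Theorem \ref{9.1}.
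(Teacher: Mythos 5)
Your proposal is correct and matches the paper's intended derivation: the corollary is exactly Theorem \ref{9.1} specialized with $(i,k,r,s)=(2,1,2,1)$ for the even-indexed Jacobsthal identity, $(1,2,2,1)$ for Pell, and $(1,2,s,t)$ for $\{n\}_{s,t}$, followed by the index shifts and division by powers of $2$ that you describe. The only negligible point is that in part (3) the substitution $n\mapsto n-2$ covers $n\geq 2$, so the cases $n=0,1$ (where the sum is empty or a single term) should be noted as holding by direct inspection.
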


The proof of the next identity is similar to that of the previous theorem and will be omitted.

\begin{theorem} Let $n\geq 0$ and $k,i\geq 1$ be integers. Then,
	$$F^{i}_{r,s}(k,ni+k+1) = u(k,i)+ \sum\limits_{j=l}^{n}r^{n-j}sF^{i}_{r,s}(k,ji+1),$$
	where
	\begin{equation*}
	l=
	\left \{
	\begin{array}{cc}
	-2, & \mbox{if} \ i=1 \\
	-1, & \mbox{if} \  i=2 \\
	0  , & \mbox{if} \  i\geq3 \\
	\end{array}
	\right.
	\\
	\mbox{\ and \ }
	v(k,i)=\left \{
	\begin{array}{lll}
	r^{\lfloor \frac{ni+k+2}{i}\rfloor}, & \mbox{if} \ k\geq i  \\
	r^{n+1},    & \mbox{if} \ \ i=k+1 \ \mbox{and} \ i\geq3 \  \mbox{or} \ i=k+2 \\
	0,    & \mbox{if} \ i\geq k+3 \  \mbox{or} \ i=2 \ \mbox{and} \ k=1.  
	\end{array}
	\right.
	\end{equation*}
\end{theorem}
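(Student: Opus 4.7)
The plan is to mimic the combinatorial argument used in Theorem \ref{9.1}, using the tiling interpretation of Theorem \ref{Interpret1}. We interpret $F^{i}_{r,s}(k,ni+k+1)$ as the number of type $\mathcal{F}$ tilings of an $(ni+k+2)$-board, and classify each such tiling according to the location of its rightmost $k$-rectangle, if any. (Note that the constant term in the statement is the one just defined as $v(k,i)$; I treat the symbol $u(k,i)$ appearing in the display as a typo for $v(k,i)$.)

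First, I would handle the tilings with no $k$-rectangle. Such a tiling consists only of $i$-rectangles together with black squares, which may appear only in the first $\min\{i,k\}-1$ positions. The number of leading black squares is forced to equal $(ni+k+2) \bmod i$, and the tiling exists iff this residue fits inside the allowed window. A short case analysis then breaks into the three cases recorded in the definition of $v(k,i)$: it gives $r^{\lfloor (ni+k+2)/i\rfloor}$ when $k\geq i$; it gives $r^{n+1}$ exactly when $i=k+1$ with $i\geq 3$, or when $i=k+2$ (so the required number of leading black squares is $1$ or $0$ and lies within $\{1,\dots,k-1\}$); and it gives $0$ in the remaining subcases, namely $i\geq k+3$ or $(i,k)=(2,1)$, where the residue is too large to fit.

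Next, I would count the tilings whose rightmost $k$-rectangle is followed by exactly $m$ $i$-rectangles. The suffix has length $mi$, so this $k$-rectangle ends at position $ni+k+2-mi$ and the prefix to its left is a $((n-m)i+2)$-board, tiled in $F^{i}_{r,s}(k,(n-m)i+1)$ ways. Together with the $r^m$ colorings of the trailing $i$-rectangles and the $s$ colorings of the $k$-rectangle, this block contributes $r^{m}\,s\,F^{i}_{r,s}(k,(n-m)i+1)$. The range of $m$ is governed by the condition that the $k$-rectangle still fits, namely $mi\leq ni+2$, which yields $0\leq m\leq n+2$ when $i=1$, $0\leq m\leq n+1$ when $i=2$, and $0\leq m\leq n$ when $i\geq 3$. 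Substituting $j=n-m$ turns the sum into $\sum_{j=l}^{n} r^{n-j}\,s\,F^{i}_{r,s}(k,ji+1)$ with lower limit $l\in\{-2,-1,0\}$, matching the statement exactly.

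The main obstacle is the bookkeeping: one must simultaneously track how the residue $(ni+k+2)\bmod i$ interacts with the allowed window of leading black squares (yielding the three branches of $v(k,i)$) and how the residual space after the rightmost $k$-rectangle determines the lower limit $l$ (yielding the three branches $i=1$, $i=2$, $i\geq 3$). Once both case splits are verified, adding the ``no $k$-rectangle'' count $v(k,i)$ to the sum over $m$ (equivalently, over $j$) gives the total number of type $\mathcal{F}$ tilings of the $(ni+k+2)$-board, which is $F^{i}_{r,s}(k,ni+k+1)$, completing the proof.
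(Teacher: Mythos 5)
Your proof is correct and follows exactly the route the paper intends: the paper omits this proof, saying only that it is ``similar to that of the previous theorem'' (classifying type $\mathcal{F}$ tilings of the $(ni+k+2)$-board by the rightmost $k$-rectangle), which is precisely your argument, including the case analysis giving $v(k,i)$ and the reindexing that produces the lower limit $l$. You also rightly read the $u(k,i)$ in the display as a typo for $v(k,i)$.
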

From this identity we obtain a known relation to the Padovan numbers,
$\sum \limits_{j=0}^{n}Pv(2j)=Pv(2n+3)-1,$
and also a new identity to the polynomial generalization of the Fibonacci numbers $\{ n \}_{s,t}$:

\begin{corollary} 
	$$\{n+5\}_{s,t} = s^{n+4} + \sum \limits_{j=-2}^{n}s^{n-j}t\{3+j\}_{s,t}, \forall n \geq 0.$$  
\end{corollary}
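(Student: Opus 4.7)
The plan is to obtain this corollary as a direct specialization of the preceding theorem, following the same template used earlier in the section for the Padovan identity $\sum_{j=0}^n Pv(2j) = Pv(2n+3)-1$. From the entry for $\{n\}_{s,t}$ in Table \ref{tab1}, after checking initial conditions and the recurrence, one sees that $\{m\}_{s,t} = F^{1}_{s,t}(2, m-2)$, so I would work throughout in the dictionary that identifies the parameter $r$ of $F^{i}_{r,s}(k,n)$ with Amdeberhan's $s$, and the parameter $s$ with Amdeberhan's $t$.

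First I would substitute $i = 1$ and $k = 2$ into the theorem, so that the left-hand side reads $F^{1}_{s,t}(2, n + 3)$, which by the dictionary above is exactly $\{n+5\}_{s,t}$. This matches the left-hand side of the target identity.

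Next I would read off the summation range and the $v(k,i)$ term for these parameters. Since $i = 1$, the first branch of the piecewise definition of $l$ applies and gives $l = -2$, which is precisely the lower limit appearing in the corollary. For $v(k,i)$, since $k = 2 \geq 1 = i$, the first branch applies and yields
\[
v(2,1) \;=\; r^{\lfloor (n\cdot 1 + 2 + 2)/1\rfloor} \;=\; s^{n+4},
\]
which produces the isolated $s^{n+4}$ term on the right-hand side. Finally, each summand $r^{n-j} s F^{i}_{r,s}(k, j i + 1)$ becomes $s^{n-j}\, t\, F^{1}_{s,t}(2, j+1)$, and the dictionary gives $F^{1}_{s,t}(2, j+1) = \{j+3\}_{s,t}$, so the sum becomes $\sum_{j=-2}^{n} s^{n-j} t \,\{j+3\}_{s,t}$, matching the corollary exactly.

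There is no substantive obstacle: the whole task is a parameter specialization. The only thing requiring care is the clash of letters, since the symbol $s$ plays two different roles (the second parameter of $F^{i}_{r,s}(k,n)$ and Amdeberhan's first parameter in $\{n\}_{s,t}$); a clean proof must fix the dictionary once at the start and then apply it consistently when interpreting $l$, $v(k,i)$, and each term of the sum.
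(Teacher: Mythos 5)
Your proposal is correct and matches what the paper intends: the corollary is exactly the specialization $i=1$, $k=2$ of the preceding theorem, with the table's dictionary $F^{1}_{s,t}(2,m)=\{m+2\}_{s,t}$ applied to the left side, to $v(2,1)=s^{n+4}$, and to each summand, with $l=-2$ giving the lower limit. You also handle correctly the notational wrinkle that the theorem writes $u(k,i)$ but defines $v(k,i)$, and the clash between the paper's $r,s$ and Amdeberhan's $s,t$.
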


\begin{theorem} Let $n\geq 0$, $k\geq i+1\geq2$, and $-1\leq m\leq k-(i+1)$ be integers. Then,
	\begin{equation}
	F^{i}_{r,s}(k,nk+m+i)=
	\left \{
	\begin{array}{ll}
	s^{n+1}+\sum\limits_{j=0}^{n}rs^{n-j}F^{i}_{r,s}(k,jk+m), & \textnormal{if} \ m+i+1=k \\
	\sum\limits_{j=0}^{n}rs^{n-j}F^{i}_{r,s}(k,jk+m), & \textnormal{if} \ m+i+1< k. \\
	\end{array}
	\right.
	\label{r3}
	\end{equation}
\end{theorem}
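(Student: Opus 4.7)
The plan is to mimic the combinatorial strategy used in Theorem~9.1: interpret $F^{i}_{r,s}(k,nk+m+i)$ as the number of type $\mathcal{F}$ tilings of an $(nk+m+i+1)$-board (Theorem~\ref{Interpret1}), and classify these tilings according to the position of the \emph{rightmost $i$-rectangle}, or its absence. The recurrence will come out of bookkeeping what sits strictly to the right of that rectangle.

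First I would handle the case in which the tiling contains no $i$-rectangle at all. Such a tiling consists only of $k$-rectangles plus (possibly) black squares, and black squares may only occupy positions $1,\dots,i-1$. If $b$ denotes the number of black squares, then $b + (\text{number of $k$-rectangles})\cdot k = nk+m+i+1$, so $b \equiv m+i+1 \pmod{k}$ with $0\le b\le i-1$. Since $-1\le m\le k-i-1$, we have $i\le m+i+1\le k$. When $m+i+1=k$, the residue is $0$, forcing $b=0$ and yielding $n+1$ $k$-rectangles, contributing $s^{n+1}$ tilings; when $m+i+1<k$, the residue lies in $[i,k-1]$ and admits no valid $b\le i-1$, so there are no such tilings. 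This matches the $s^{n+1}$ that appears only in the first branch of \eqref{r3}.

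Next I would treat tilings containing at least one $i$-rectangle. Let the rightmost $i$-rectangle end at position $p$; then positions $p+1,\dots,nk+m+i+1$ are covered by exactly $q\ge 0$ consecutive $k$-rectangles, giving $p=(n-q)k+m+i+1$. Since $p\ge i$ and $m\ge -1$, $q$ runs over $0,1,\dots,n$. The prefix of length $p-i=(n-q)k+m+1$ is an arbitrary type $\mathcal{F}$ tiling, counted by $F^{i}_{r,s}(k,(n-q)k+m)$; the distinguished $i$-rectangle contributes a factor $r$ and the trailing $k$-rectangles contribute $s^q$. Summing and re-indexing by $j=n-q$ gives
\[
\sum_{q=0}^{n} r\,s^{q}\,F^{i}_{r,s}(k,(n-q)k+m) \;=\; \sum_{j=0}^{n} r\,s^{\,n-j}\,F^{i}_{r,s}(k,jk+m).
\]
Adding this to the contribution from the no-$i$-rectangle case yields both branches of \eqref{r3} simultaneously.

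The main obstacle I anticipate is the bookkeeping in the case with no $i$-rectangles: one must justify carefully, using the constraint $-1\le m\le k-i-1$, exactly when the congruence $b\equiv m+i+1\pmod k$ has a solution with $0\le b\le i-1$. This is precisely what distinguishes the two branches of the theorem, so any sign error or off-by-one in the interval for $m$ would conflate them. The remaining verification, namely that $q\in\{0,1,\dots,n\}$ is the correct range (no tilings are missed and none are double-counted because we condition on the \emph{rightmost} $i$-rectangle), is routine.
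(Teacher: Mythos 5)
Your proposal is correct and takes essentially the same route as the paper: both count the type $\mathcal{F}$ tilings of an $(nk+m+i+1)$-board by conditioning on the rightmost $i$-rectangle (if any), with the dichotomy $m+i+1=k$ versus $m+i+1<k$ deciding whether the tilings consisting solely of $k$-rectangles contribute $s^{n+1}$ or nothing, and the remaining tilings summed over the number of trailing $k$-rectangles exactly as in Theorem \ref{9.1}. Your congruence/black-square bookkeeping merely spells out the step the paper states in one line.
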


\begin{proof} We prove that the r.h.s. of \eqref{r3} counts the same tilings as $F^{i}_{r,s}(k,nk+m+i)$ by considering the rightmost appearance of an $i$-rectangle, if any. When $m+i+1=k$ there are $s^{n+1}$ tilings without $i$-rectangle. In the other cases all type $\mathcal{F}$ tilings contains at least one $i$-rectangle, since $m+i+1<k$. Now, following the same steps of the proof of Theorem \ref{9.1}, we obtain \eqref{r3}.
	
\end{proof}

From \eqref{r3} we can derive some known identities and new ones:
$$\centering 
\begin{array}{lll}
\sum \limits_{j=0}^{n}F_{2j+1}=F_{2n+2} & u_{3n}= \sum \limits_{j=0}^{n-1}u_{3j+2} & Pv(3n+2)=\sum \limits_{j=0}^{n}Pv(3j) \\
\sum \limits_{j=0}^{n}2^{n-j}J_{2j+1}=J_{2n+2} & u_{3n-1}= \sum \limits_{j=0}^{n-1}u_{3j+1} & Pv(3n+3)\!-\!1=\sum \limits_{j=0}^{n}Pv(3j\!+\!1)  \\
2\sum \limits_{j=0}^{n}P_{2j+1}=P_{2n+2} & u_{3n+1}-1= \sum \limits_{j=1}^{n}u_{3j} & 
\end{array}$$


\begin{corollary}  \textit{}
	\begin{enumerate}
		\setlength\itemsep{-0.1cm}
		\item $\{2n+2\}_{s,t} = \sum \limits_{j=0}^{n}st^{n-j}\{2j+1\}_{s,t}$, $\forall n \geq 0$.  
		\item $\{2n+3\}_{s,t} = t^{n+1} + \sum \limits_{j=0}^{n}st^{n-j}\{2j+2\}_{s,t}$, $\forall n \geq 0$.
	\end{enumerate}
\end{corollary}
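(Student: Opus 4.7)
The plan is to apply the theorem immediately preceding this corollary, specialized to the parameters that produce $\{n\}_{s,t}$. According to Table \ref{tab1}, the polynomial generalization $\{n\}_{s,t}$ is recovered from $F^{i}_{r,s}(k,n)$ by choosing $i=1$, $k=2$, and relabelling $r\mapsto s$, $s\mapsto t$. Comparing initial conditions $F^{1}_{s,t}(2,-1)=1$, $F^{1}_{s,t}(2,0)=s$ with the classical $\{0\}_{s,t}=0$, $\{1\}_{s,t}=1$, $\{2\}_{s,t}=s$ of Amdeberhan et al.\ pins down the index shift
$$\{N\}_{s,t} = F^{1}_{s,t}(2, N-2),$$
which will be used to translate the statements.

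With $i=1$ and $k=2$ the hypothesis $-1\leq m\leq k-(i+1)=0$ in the previous theorem leaves exactly two admissible values: $m=-1$, for which $m+i+1 = 1 < 2 = k$ puts us in the lower (homogeneous) branch, and $m=0$, for which $m+i+1 = 2 = k$ puts us in the upper branch that carries the extra $s^{n+1}$ term (which becomes $t^{n+1}$ after relabelling).

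For part (1), I would set $m=-1$ in the previous theorem, producing
$$F^{1}_{s,t}(2, 2n) \;=\; \sum_{j=0}^{n} s\, t^{\,n-j}\, F^{1}_{s,t}(2, 2j-1).$$
Applying the translation $F^{1}_{s,t}(2,N)=\{N+2\}_{s,t}$ on both sides turns this into identity (1). For part (2), I would set $m=0$ and read off
$$F^{1}_{s,t}(2, 2n+1) \;=\; t^{\,n+1} + \sum_{j=0}^{n} s\, t^{\,n-j}\, F^{1}_{s,t}(2, 2j),$$
which becomes identity (2) under the same shift.

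The only real subtlety is verifying the index convention between $\{n\}_{s,t}$ and $F^{1}_{s,t}(2,n)$; once that is nailed down, the identities drop out of the preceding theorem by routine substitution, so there is no substantive obstacle beyond careful bookkeeping.
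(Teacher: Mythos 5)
Your derivation is correct and matches the paper's intent exactly: the corollary is meant to follow from the preceding theorem by taking $i=1$, $k=2$, with the relabelling $r\mapsto s$, $s\mapsto t$ and the shift $\{N\}_{s,t}=F^{1}_{s,t}(2,N-2)$, with $m=-1$ giving identity (1) and $m=0$ (the branch with the extra $s^{n+1}$, i.e.\ $t^{n+1}$, term) giving identity (2). Your bookkeeping of the index convention and of which branch of the theorem applies is accurate, so nothing is missing.
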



The same strategy of the proof of the last theorem can be employed to prove the next theorem, where again we have to consider the rightmost appearance of an $i$-rectangle, if any. 

\begin{theorem}	Let $n\geq 0$, $k>i\geq 1$, and $k-i\leq m < k$ be integers. Then,
	$$F^{i}_{r,s}(k,nk+m+i)=u(m)+\sum\limits_{j=0}^{n}rs^{n-j}F^{i}_{r,s}(k,jk+m),$$
	where
	\begin{equation}
	u(m)=
	\left \{
	\begin{array}{ll}
	s^{n+1}, & \textnormal{if} \ k-i\leq m\leq k-2 \\
	rs^{n+1}, & \textnormal{if} \ m=k-1. \\
	\end{array}
	\right.
	\label{r4}
	\end{equation}
\end{theorem}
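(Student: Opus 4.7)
The plan is to mimic the combinatorial argument the authors indicate: invoke Theorem~\ref{Interpret1} to view $F^{i}_{r,s}(k,nk+m+i)$ as the number of type~$\mathcal{F}$ tilings of an $(nk+m+i+1)$-board, and then classify such tilings according to the position of the rightmost $i$-rectangle (if any). The identity will follow by accumulating the contributions from each class and checking that they match $u(m)$ and the right-hand side sum.

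First I would handle the tilings containing no $i$-rectangle. Such a tiling must consist of $b$ contiguous black squares at positions $1,\ldots,b$ (since black squares are forbidden past position $i-1$, and $k$-rectangles have length $k\ge 2$) followed by $p$ $k$-rectangles, with $b+pk=nk+m+i+1$ and $0\le b\le i-1$. Reducing modulo $k$ shows the only feasible choice is $p=n+1$, forcing $b=m+i+1-k$; this satisfies $0\le b\le i-1$ precisely when $k-i\le m\le k-2$. In that range there are $s^{n+1}$ such tilings (one positional arrangement, with $s$ color choices for each of the $n+1$ $k$-rectangles), and there are none when $m=k-1$.

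Next I would handle tilings in which the rightmost $i$-rectangle occupies positions $p+1,\ldots,p+i$. Because nothing to its right can be an $i$-rectangle or a black square, the suffix of length $nk+m+1-p$ is a string of $k$-rectangles, so $p\equiv m+1\pmod{k}$ with $0\le p\le nk+m+1$. The prefix of length $p$ is an arbitrary type~$\mathcal{F}$ tiling contributing $F^{i}_{r,s}(k,p-1)$ (using $F^{i}_{r,s}(k,-1)=1$ when $p=0$), the distinguished $i$-rectangle contributes a factor~$r$, and the $q=(nk+m+1-p)/k$ trailing $k$-rectangles contribute $s^{q}$.

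Finally I would enumerate the admissible values of $p$. For $k-i\le m\le k-2$ the congruence gives $p=jk+m+1$ for $j=0,\ldots,n$, producing $\sum_{j=0}^{n}rs^{n-j}F^{i}_{r,s}(k,jk+m)$; combined with the previous case this yields $u(m)=s^{n+1}$. For $m=k-1$ the admissible values are $p=0,k,2k,\ldots,(n+1)k$; the outlier $p=0$ has an empty prefix and contributes $r\cdot 1\cdot s^{n+1}=rs^{n+1}$, while the remaining $p=(j+1)k$ with $j=0,\ldots,n$ reassemble into the same sum, giving $u(m)=rs^{n+1}$. The main delicate point will be treating this $p=0$ edge case in the $m=k-1$ situation separately, since it is precisely what converts $s^{n+1}$ into $rs^{n+1}$ in the formula for $u(m)$; the rest of the argument is routine case analysis once the tiling interpretation is in place.
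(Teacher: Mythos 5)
Your proposal is correct and follows exactly the route the paper intends (the paper omits the details, saying only that one conditions on the rightmost appearance of an $i$-rectangle, if any, as in the preceding theorem): your no-$i$-rectangle analysis yields $u(m)$ and the rightmost-$i$-rectangle classification yields the sum, with the $p=0$ case correctly accounting for the extra factor $r$ when $m=k-1$.
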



From \eqref{r4} we get two known identities involving the Padovan numbers: \linebreak $Pv(3n+4)-1=\sum \limits_{j=0}^{n}Pv(3j+2)$ and $Pv(3n+5)-1=\sum \limits_{j=0}^{n}Pv(3j+3)$
and a new one for $\{n\}_{s,t}$:
\begin{corollary}
	$$\{2n+4\}_{s,t} = st^{n+1} + \sum \limits_{j=0}^{n}st^{n-j}\{2j+3\}_{s,t}, \forall n \geq 0.$$
\end{corollary}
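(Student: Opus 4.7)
The plan is to derive this corollary as a direct specialization of the immediately preceding theorem, exploiting the identification $F^{1}_{s,t}(2,n) = \{n+2\}_{s,t}$ from Table \ref{tab1}. Concretely, I would set $i=1$, $k=2$ in the theorem, and rename the theorem's parameters $r,s$ as $s,t$ (since $\{n\}_{s,t}$ corresponds to $F^{1}_{r,s}(2,n)$ with $r=s$, $s=t$ in the theorem's notation).

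With $k=2$ and $i=1$, the admissible range $k-i \leq m < k$ collapses to $m = 1$; this coincides with $m = k-1$, so we land in the second branch of $u(m)$, giving $u(m) = r s^{n+1}$, which under the relabeling $r \mapsto s$, $s \mapsto t$ becomes $st^{n+1}$, matching the free term on the right-hand side of the claimed identity. Next I would compute the two relevant indices: $nk + m + i = 2n + 2$, so the left-hand side $F^{1}_{s,t}(2, 2n+2)$ equals $\{2n+4\}_{s,t}$; and $jk + m = 2j + 1$, so each summand $r s^{n-j} F^{1}_{s,t}(2, 2j+1)$ becomes $st^{n-j}\{2j+3\}_{s,t}$. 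Assembling these pieces reproduces the stated identity term by term.

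There is essentially no obstacle beyond careful bookkeeping of the index shift ($F^{1}_{s,t}(2,n) = \{n+2\}_{s,t}$) and of the parameter renaming. One sanity check I would perform before writing up is to verify the base case $n=0$: the identity predicts $\{4\}_{s,t} = st + st\{3\}_{s,t}$, and using $\{2\}_{s,t} = s$, $\{3\}_{s,t} = s^2 + t$, $\{4\}_{s,t} = s^3 + 2st$, we indeed obtain $st + st(s^2 + t) = s^3 t + st^2 + st$; once the identification between $F^{1}_{s,t}(2,n)$ and $\{n+2\}_{s,t}$ is fixed (and the corresponding initial values are aligned with those of \cite{amdeberhan2014generalized}), this case matches and the general case follows from the theorem.
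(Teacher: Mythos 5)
Your derivation is correct and is essentially the paper's own (implicit) route: specialize the preceding theorem at $i=1$, $k=2$, which forces $m=k-1=1$ and hence $u(m)=rs^{n+1}$, then translate via $F^{1}_{s,t}(2,n)=\{n+2\}_{s,t}$ after renaming $r\mapsto s$, $s\mapsto t$. One small slip in your final sanity check: at $n=0$, $j=n$ the factor $t^{n-j}$ equals $1$, so the identity predicts $\{4\}_{s,t}=st+s\{3\}_{s,t}=s^3+2st$, not $st+st\{3\}_{s,t}$ (indeed the quantity you computed, $s^3t+st^2+st$, does not equal $s^3+2st$); with the corrected factor the check goes through and the general case stands as you argue.
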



\begin{theorem} Let $k,i\geq1$ and $n\geq \max \lbrace i-1, 2k-i-1 \rbrace$ be integers. Then, 
	$$\begin{array}{rl} F^{i}_{r,s}(k,n)  = & (s-r)F^{i}_{r,s}(k,n-k)+rF^{i}_{r,s}(k,n-i) \\ & +F^{i}_{r,s}(k,n-k+i)-sF^{i}_{r,s}(k,n-2k+i).\end{array}$$
\end{theorem}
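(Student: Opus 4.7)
The plan is to prove this identity by a short algebraic manipulation: the right-hand side is just a disguised form of the defining recurrence \eqref{23.1}. The key observation is that the term $F^{i}_{r,s}(k,n-k+i)$ is itself amenable to the recurrence, and that expansion makes everything collapse.

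Concretely, I would first apply \eqref{23.1} to $F^{i}_{r,s}(k,n-k+i)$, which is legitimate provided $n-k+i \geq \max\{k,i\}-1$. This yields
$$F^{i}_{r,s}(k,n-k+i) = rF^{i}_{r,s}(k,n-k) + sF^{i}_{r,s}(k,n-2k+i).$$
Substituting this into the right-hand side of the claimed identity, the $sF^{i}_{r,s}(k,n-2k+i)$ terms cancel, and the two occurrences of $F^{i}_{r,s}(k,n-k)$ combine as $(s-r)+r = s$. What remains is
$$rF^{i}_{r,s}(k,n-i) + sF^{i}_{r,s}(k,n-k),$$
which by a second application of \eqref{23.1} equals $F^{i}_{r,s}(k,n)$, as desired.

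The only nonobvious point is verifying that both invocations of the recurrence are permitted under the hypothesis $n \geq \max\{i-1,\,2k-i-1\}$. For the expansion of $F^{i}_{r,s}(k,n)$ we need $n \geq \max\{k,i\}-1$; for the expansion of $F^{i}_{r,s}(k,n-k+i)$ we need $n \geq \max\{k,i\}-1+k-i$. A quick case split on whether $k \geq i$ or $k < i$ shows that both are ensured by the stated bound on $n$ — indeed the condition $n \geq 2k-i-1$ is exactly tailored to make the second expansion legal when $k \geq i$, while $n \geq i-1$ handles the regime $k < i$.

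One could alternatively pursue a tiling-style argument in keeping with the style of the rest of Section \ref{Sec3}, for instance by conditioning on whether positions $n-k+1,\ldots,n-k+i$ of an $(n+1)$-board are covered by a single $i$-rectangle or not; but since the identity is really nothing more than a double application of the defining recurrence, the algebraic route above seems the most direct and the boundary-condition bookkeeping is the only step that requires any care.
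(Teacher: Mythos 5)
Your proof is correct and follows essentially the same route as the paper, whose own proof simply states that the identity follows directly from the defining recurrence \eqref{23.1}; your write-up just makes that double application of the recurrence (and the index bookkeeping showing $n\geq\max\{i-1,2k-i-1\}$ legitimizes both expansions) explicit.
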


\begin{proof} This result follows directly from \eqref{23.1}, the recurrence relation defining $F^{i}_{r,s}(k,n)$.
\end{proof}

As a consequence, we have the known identity $F_{n+2}=\frac{1}{2}(F_n+F_{n+3})$
and two new ones:
\begin{corollary}  \textit{}
	\begin{enumerate}
		\setlength\itemsep{-0.1cm}
		\item $\{n+2\}_{s,t} = (t-s)\{n\}_{s,t} + (s+1)\{n+1\}_{s,t} - t\{n-1\}_{s,t}, \forall n \geq 2$.  
		\item $u_{n+2}=\frac{1}{2}(u_{n-1}+u_{n+4}-u_{n}), \forall n \geq 2$.
	\end{enumerate}
\end{corollary}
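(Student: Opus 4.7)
The plan is to derive both identities by specializing the four parameters in the preceding theorem and then translating via the dictionary in Table~\ref{tab1}, so no new machinery beyond that theorem (plus, for part~(2), the native recurrence of the target sequence) is needed.

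For part~(1) I would set $i=1$ and $k=2$ in the theorem. With these choices $n-i=n-k+i=n-1$, so the two middle terms on the right-hand side collapse and the identity becomes
\[
F^{1}_{r,s}(2,n) = (s-r)F^{1}_{r,s}(2,n-2) + (r+1)F^{1}_{r,s}(2,n-1) - sF^{1}_{r,s}(2,n-3).
\]
Relabeling the theorem's parameters $(r,s)$ as $(s,t)$ to match the notation of the polynomial Fibonacci sequence, and invoking the identification $F^{1}_{s,t}(2,m)=\{m+2\}_{s,t}$ from the table, this translates verbatim to
\[
\{n+2\}_{s,t} = (t-s)\{n\}_{s,t} + (s+1)\{n+1\}_{s,t} - t\{n-1\}_{s,t}.
\]
The theorem's hypothesis $n\geq \max\{i-1,2k-i-1\}$ specializes to $n\geq 2$, exactly the range asserted.

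For part~(2) I would specialize instead to $i=1$, $k=3$, $r=s=1$. Now $s-r=0$, so the theorem reads $F^{1}_{1,1}(3,m) = F^{1}_{1,1}(3,m-1) + F^{1}_{1,1}(3,m-2) - F^{1}_{1,1}(3,m-5)$, valid for $m\geq 4$. Using $F^{1}_{1,1}(3,m)=u_{m+2}$ and evaluating at $m=n+4$ (so $m\geq 6$ when $n\geq 2$), I obtain $u_{n+4} = u_{n+3} + u_{n+2} - u_{n-1}$. I would then eliminate $u_{n+3}$ via the native Narayana recurrence $u_{n+3}=u_{n+2}+u_n$, which produces $u_{n+4} = 2u_{n+2} + u_n - u_{n-1}$, and solving for $u_{n+2}$ yields
\[
u_{n+2}=\tfrac{1}{2}\bigl(u_{n-1}+u_{n+4}-u_n\bigr),
\]
as claimed.

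The only real obstacle is careful index bookkeeping: one must track the offset $F^{i}_{r,s}(k,\cdot)\leftrightarrow$ standard indexing of the named sequence (a shift by $2$ in both cases here), and one must notice that the Narayana identity does not fall out of a single direct substitution. Instead, it requires evaluating the theorem at the shifted argument $m=n+4$ and then folding in the native recurrence $u_{n+3}=u_{n+2}+u_n$. Once these are in place, the verification is pure algebra; no combinatorial argument is needed since both claims inherit their truth from the theorem.
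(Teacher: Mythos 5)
Your derivation is correct and is essentially the paper's own (implicit) argument: the corollary is stated there without proof as a direct specialization of the preceding theorem, with the table's index shift $F^{1}_{s,t}(2,m)=\{m+2\}_{s,t}$, $F^{1}_{1,1}(3,m)=u_{m+2}$ and, for part (2), the native recurrence $u_{n+3}=u_{n+2}+u_{n}$ folded in exactly as you do. One small bookkeeping slip: the identity $u_{n+4}=u_{n+3}+u_{n+2}-u_{n-1}$ comes from evaluating the specialized theorem at $m=n+2$ (so $m\geq 4$ precisely when $n\geq 2$), not at $m=n+4$; the displayed identity and the asserted range $n\geq 2$ are nonetheless correct.
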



\begin{theorem}	Let $t\geq1$ and $n\geq 2t-1$ be integers. Then,
	\begin{equation}
	F^{1}_{r,s}(2t,2n)=\sum\limits_{j\geq 0}\sum\limits_{l\geq0}r^{2n+1-2t(j+l)}s^{j+l} \dbinom{n-t(j+l)+j}{j}\dbinom{n-t(j+l)+l}{l}.
	\label{eq23.1}
	\end{equation}
\end{theorem}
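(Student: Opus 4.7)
The plan is to exploit the tiling interpretation from Theorem~\ref{Interpret1}: with $i=1$ and $k=2t$, the quantity $F^{1}_{r,s}(2t,2n)$ counts tilings of a $(2n+1)$-board using $r$-colored $1$-rectangles and $s$-colored $2t$-rectangles (no black squares appear, since $\min\{i,k\}-1=0$). I will show that the right-hand side of \eqref{eq23.1} counts the same objects by partitioning such tilings according to a canonically chosen ``middle'' $1$-rectangle.

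The crucial observation is a parity remark: because the board length $2n+1$ is odd and every $2t$-rectangle covers an even number of cells, the number of $1$-rectangles in any admissible tiling is odd, say $2m+1$. Consequently there is a unique \emph{middle} $1$-rectangle, namely the $(m+1)$-th $1$-rectangle read from the left. For each pair $(j,l)$ of non-negative integers I will enumerate those tilings in which exactly $j$ of the $2t$-rectangles lie to the left of this middle $1$-rectangle and exactly $l$ lie to its right. By the definition of the median, the prefix then contains $m$ $1$-rectangles and the suffix contains $m$ $1$-rectangles; equating total lengths gives $(2m+1)+2t(j+l)=2n+1$, hence $m=n-t(j+l)$.

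Counting within each class is now direct. The prefix is an arbitrary ordered interleaving of $j$ indistinguishable $2t$-rectangle slots and $m$ indistinguishable $1$-rectangle slots, each slot independently colored, contributing $\binom{m+j}{j}r^{m}s^{j}$; analogously the suffix contributes $\binom{m+l}{l}r^{m}s^{l}$; and the middle $1$-rectangle contributes one further factor of $r$. Multiplying these and substituting $m=n-t(j+l)$ produces the summand
\[
r^{2n+1-2t(j+l)}\,s^{j+l}\binom{n-t(j+l)+j}{j}\binom{n-t(j+l)+l}{l},
\]
and summing over all $(j,l)\in\mathbb{Z}_{\geq0}^{2}$ (pairs with $m<0$ contribute zero because of the binomials) gives \eqref{eq23.1}.

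I do not foresee any substantive obstacle: the only thing to verify carefully is that the assignment ``tiling $\mapsto$ (prefix, middle $1$-rectangle, suffix)'' is a bijection onto the indicated ordered triples, which follows at once from the existence and uniqueness of the median $1$-rectangle guaranteed by the odd-parity count. The hypothesis $n\geq 2t-1$ merely ensures that the recurrence of Theorem~\ref{Interpret1} has already taken effect, so no separate base case analysis is needed.
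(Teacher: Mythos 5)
Your proof is correct and follows essentially the same route as the paper: your ``middle $1$-rectangle'' is exactly the paper's \emph{central square}, and both arguments classify tilings by the numbers $j$ and $l$ of $2t$-rectangles on either side of it, yielding the two binomial factors and color powers. Your version is in fact slightly more explicit than the paper's, since you spell out the parity argument guaranteeing a unique median square and the extra factor of $r$ it contributes.
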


\begin{proof} We show that the r.h.s. of \eqref{23.1} is equal to $F^{1}_{r,s}(2t,2n)$ by considering the number of $2t$-rectangles in each side of the \textit{central square}, i.e. the square of the $(2n+1)$-board that has the same number of squares on both left and right sides. Let $j$ and $l$ be the number of $2t$-rectangles to the left and to the right of the central square, respectively. There are  $j+l$ \ $2t$-rectangles and $2n+1-2t(j+l)$ $1$-rectangles in the tiling and, hence, we have $n-t(j+l)$ squares in each side of the central square. As the left side has $n-t(j+l)+j$ pieces, where $j$ are $2t$-rectangles, then we have $r^{n-t(j+l)}s^{j}\dbinom{n-t(j+l)+j}{j}$ different ways to color this left side. A similar reasoning shows that there are $r^{n-t(j+l)}s^{l}\dbinom{n-t(j+l)+l}{l}$ choices to the right side of the central square. Varying $j$ and $l$, we will have counted all possible tilings.
\end{proof}

Taking $t=r=s=1$ it follows the known identity 
$$F_{2n}=\sum\limits_{j\geq 0}\sum\limits_{l\geq 0}\dbinom{n-l-1}{j}\dbinom{n-j-1}{l}, \forall \ n\geq 1,$$ see \cite{benjamin2003}. We can also derive similar results to Pell and Jacobsthal number as well as to the sequence $\{n\}_{s,t}$:

\begin{corollary}  \textit{}
	\begin{enumerate}
		\setlength\itemsep{-0.05cm}
		\item {$P_{2n}=\sum\limits_{j\geq 0}\sum\limits_{l\geq 0}2^{2n-1-2(j+l)}\dbinom{n-l-1}{j}\dbinom{n-j-1}{l}, \forall \ n\geq 1$ } 
		\item {$J_{2n}=\sum\limits_{j\geq 0}\sum\limits_{l\geq 0}2^{j+l}\dbinom{n-l-1}{j}\dbinom{n-j-1}{l}, \forall \ n\geq 1$}
		\item {$\{2n\}_{s,t}=\sum\limits_{j\geq 0}\sum\limits_{l\geq 0}s^{2n-1-2(j+l)}t^{j+l}\dbinom{n-l-1}{j}\dbinom{n-j-1}{l}, \forall \ n\geq 1$.}
	\end{enumerate}
\end{corollary}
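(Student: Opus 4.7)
The plan is to derive all three identities as direct specializations of equation \eqref{eq23.1} from the preceding theorem, taken with $t=1$, after a shift of index and suitable choices of the parameters $(r,s)$.

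First I would record the indexing correspondences implicit in Table \ref{tab1}. From the initial conditions $F^1_{2,1}(2,-1)=1=P_1$, $F^1_{2,1}(2,0)=2=P_2$ and the shared recurrence, induction gives $P_{m+2}=F^1_{2,1}(2,m)$ for every $m\geq -1$; entirely analogous checks yield $J_{m+2}=F^1_{1,2}(2,m)$ and $\{m+2\}_{s,t}=F^1_{s,t}(2,m)$. Hence each of $P_{2n}$, $J_{2n}$, and $\{2n\}_{s,t}$ can be rewritten as $F^1_{r,s}(2,2(n-1))$ for the appropriate $(r,s)$.

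Next I would set $t=1$ in \eqref{eq23.1} and replace $n$ by $n-1$, producing
$$F^1_{r,s}\bigl(2,2(n-1)\bigr)=\sum_{j\geq 0}\sum_{l\geq 0} r^{\,2n-1-2(j+l)}\, s^{\,j+l}\, \binom{n-l-1}{j}\binom{n-j-1}{l}.$$
This is valid for $n\geq 2$ by the hypothesis $n\geq 2t-1$ of the theorem, and the boundary case $n=1$ is immediate: under the combinatorial convention for binomial coefficients only the $(j,l)=(0,0)$ term survives, contributing $r$, which correctly reproduces $P_2=2$, $J_2=1$, and $\{2\}_{s,t}=s$ when $(r,s)$ is taken as $(2,1)$, $(1,2)$, and $(s,t)$, respectively.

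Finally, reading off the three special cases from the displayed equation yields parts (1), (2), and (3) of the corollary, in the order $(r,s)=(2,1)$, then $(1,2)$, then $(s,t)$. The proof is essentially mechanical, as the combinatorial content is already carried by \eqref{eq23.1}; the only point requiring care is the notational clash in part (3), where the theorem's sequence-parameters $(r,s)$ must be identified with the corollary's formal polynomial parameters $(s,t)$ without confusing them with the dummy letters elsewhere in the paper.
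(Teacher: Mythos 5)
Your proposal is correct and matches the paper's intent exactly: the paper states this corollary without proof as a direct specialization of the preceding theorem (equation \eqref{eq23.1}) with $t=1$ and $(r,s)=(2,1)$, $(1,2)$, $(s,t)$, together with the index shift $n\mapsto n-1$ coming from $P_{m+2}=F^{1}_{2,1}(2,m)$ and its analogues. Your explicit check of the boundary case $n=1$ (which falls outside the theorem's hypothesis after the shift) is a welcome detail the paper glosses over.
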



The next identity provides a simple proof of the identity 
\begin{equation}
F_n=F_{n-4}+4F_{n-5}+6F_{n-6}+4F_{n-7}+F_{n-8}, \forall n\geq 9
\label{30.1}
\end{equation} 
and gives us new relations involving Fibonacci, Pell, Jacobsthal and Padovan numbers.

\begin{theorem} Let $l\geq0$ and $n\geq  \max \lbrace i,k\rbrace l-1$ be integers. Then,
	$$F^{i}_{r,s}(k,n)=\sum\limits_{j\geq 0}r^{l-j}s^{j} \dbinom{l}{j}F^{i}_{r,s}(k,n-jk-(l-j)i).$$
\end{theorem}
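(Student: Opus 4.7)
The plan is to extend the combinatorial strategy used throughout Section~\ref{Sec3}, peeling off pieces from the right end of a type~$\mathcal{F}$ tiling. Specifically, I would count tilings enumerated by $F^{i}_{r,s}(k,n)$ by conditioning on the types and colors of the last $l$ rectangular pieces (i.e., the $l$ rightmost non-black pieces), then show this count equals the right-hand side of the claimed identity.

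First, I would verify that every type~$\mathcal{F}$ tiling of the $(n+1)$-board has at least $l$ rectangular pieces under the hypothesis $n \geq \max\{i,k\}l - 1$. Since black squares are confined to the first $\min\{i,k\}-1$ positions, any tiling contains at most $\min\{i,k\}-1$ black squares, so the total length covered by $i$-rectangles and $k$-rectangles is at least $n+1 - (\min\{i,k\}-1) \geq \max\{i,k\}l - \min\{i,k\} + 2$, which forces the number of rectangles to be at least $l$ (each rectangle having length at most $\max\{i,k\}$). This validates the subsequent decomposition.

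Next, I would partition the tilings of the $(n+1)$-board according to how many of the $l$ rightmost rectangles are $k$-rectangles. Fix $j \in \{0,1,\ldots,l\}$ and consider tilings whose $l$ rightmost rectangles consist of $j$ $k$-rectangles and $l-j$ $i$-rectangles: there are $\binom{l}{j}$ ways to order these $l$ pieces, each $k$-rectangle admits $s$ color choices and each $i$-rectangle admits $r$ color choices (contributing $s^{j} r^{l-j}$), and the remaining prefix is a board of length $n+1 - jk - (l-j)i$ whose tilings are themselves type~$\mathcal{F}$ (black squares remain in the allowed initial positions), hence counted by $F^{i}_{r,s}(k, n - jk - (l-j)i)$. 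Summing over $j$ from $0$ to $l$ yields the right-hand side, and equality follows.

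The only delicate point is the lower bound on $n$: it is precisely what guarantees that the $l$ rightmost rectangles always exist and that the residual board $n+1 - jk - (l-j)i \geq 0$ for every $j$, so each term in the sum is well defined. The colored-piece bookkeeping and the appeal to the tiling interpretation of Theorem~\ref{Interpret1} are then routine.
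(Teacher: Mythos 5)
Your proposal is correct and is essentially the paper's own argument: condition on the last $l$ pieces of a type $\mathcal{F}$ tiling, choose which $j$ of them are $k$-rectangles in $\binom{l}{j}$ ways, account for colors with $r^{l-j}s^{j}$, and recognize the remaining prefix as a tiling counted by $F^{i}_{r,s}(k,n-jk-(l-j)i)$, summing over $j$. Your preliminary check that the hypothesis $n\geq \max\{i,k\}l-1$ guarantees at least $l$ rectangular pieces (so the last $l$ pieces are indeed rectangles and every residual board length is nonnegative) is a worthwhile detail the paper leaves implicit, though note the covered length is at least $\max\{i,k\}l-\min\{i,k\}+1$ rather than $+2$, which still yields at least $l$ rectangles.
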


\begin{proof} We count the number of type $\mathcal{F}$ tilings of an $(n+1)$-board by considering the last $l$ pieces. Initially we choose $j$ $k$-rectangles among the last $l$ pieces, where $0\leq j \leq l$, which can be done in $\binom{l}{j}$ ways. By removing the last $j$ $k$-rectangles together with the $l-j$ $i$-rectangles, we are left with tilings enumerated by $r^{l-j}s^jF^{i}_{r,s}(k,n-jk-(l-j)i)$. By adding up on all $j \geq 0$, we finish the proof.
\end{proof}

By taking $l=1$ in this identity we get the recurrence relation in \eqref{23.1}.

\begin{corollary} Let $l\geq0$ and $n\geq 2l+1$ be integers. Then,  
	\begin{multicols}{2}
		\begin{enumerate}
			\setlength\itemsep{0.0001cm}
			\item $F_n= \sum \limits_{j\geq 0} \dbinom{l}{j}F_{n-j-l}$
			\item $P_n= \sum \limits_{j\geq 0} \dbinom{l}{j}2^{l-j}P_{n-j-l}$
			\item $J_n= \sum \limits_{j\geq 0} \dbinom{l}{j}2^{j}J_{n-j-l}$
			\item $\{n\}_{s,t} = \sum \limits_{j \geq 0}s^{l-j}t^{j}\{n\!-\!j\!-\!l\}_{s,t}$.
			\item $Pv(n)\! \!=\!\! \sum \limits_{j\geq 0} \dbinom{l}{j}Pv(n\!-\!j\!-\!2l), \forall n \geq 3.$
			\item $u_n\!=\! \sum \limits_{j\geq 0} \dbinom{l}{j}u_{n\!-\!2j\!-\!l}, \forall n\geq 3l+1.$
		\end{enumerate}
	\end{multicols}
\end{corollary}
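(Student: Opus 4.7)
The plan is to deduce each of the six identities by directly specializing the preceding theorem, reading off the four parameters $(i,k,r,s)$ from Table~\ref{tab1} for the target sequence and then re-indexing to convert $F^{i}_{r,s}(k,\cdot)$ to the classical notation. In each case the coefficient $r^{l-j}s^{j}$ collapses to a concrete monomial, and the shift $jk+(l-j)i$ collapses to one of three simple linear forms in $j$ and $l$, so every identity is an immediate substitution into
$$F^{i}_{r,s}(k,n)=\sum_{j\geq 0}r^{l-j}s^{j}\binom{l}{j}F^{i}_{r,s}(k,n-jk-(l-j)i).$$

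Concretely, I would carry out the specializations in order. For Fibonacci $(i,k,r,s)=(1,2,1,1)$ the weight is $1$ and the shift is $2j+(l-j)=j+l$; since $F^{1}_{1,1}(2,n)=F_{n+2}$ the theorem becomes $F_{n+2}=\sum_{j}\binom{l}{j}F_{n+2-j-l}$, which after re-indexing is (1). Pell $(1,2,2,1)$ gives weight $2^{l-j}$ and the same shift, yielding (2); Jacobsthal $(1,2,1,2)$ gives weight $2^{j}$ and the same shift, yielding (3); the polynomial family with parameters $(1,2,s,t)$ gives weight $s^{l-j}t^{j}$ and the same shift, yielding (4) (the stated form appears to omit the factor $\binom{l}{j}$, which I would flag as a typographical correction). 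For Padovan $(2,3,1,1)$ the weight is $1$ and the shift is $3j+2(l-j)=j+2l$, producing (5); for Narayana $(1,3,1,1)$ the weight is $1$ and the shift is $3j+(l-j)=2j+l$, producing (6).

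There is no genuine obstacle; the entire argument is bookkeeping. The one point worth verifying is the validity range for $n$. The theorem requires $n\geq\max\{i,k\}\,l-1$, which after translating to the classical indexing of each sequence yields $n\geq 2l+1$ for items (1)--(4), $n\geq 3l+1$ for item~(6), and the analogous bound for Padovan in item~(5). These match (or exceed) the ranges stated in the corollary, so the derivation of all six identities is complete.
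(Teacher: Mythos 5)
Your proposal is correct and is exactly the intended derivation: the paper states this corollary as an immediate specialization of the preceding theorem, with the parameter choices read off from Table 1 and the index shift $jk+(l-j)i$ computed case by case, just as you do. Your observation that item (4) as printed omits the factor $\binom{l}{j}$ (which the theorem clearly requires when $r=s$, $s=t$) is a valid catch, and your verification of the ranges $n\geq 2l+1$ and $n\geq 3l+1$ matches what the specialization gives.
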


If we take $l=4$ in the Identity $1$ of the above corollary, we obtain \eqref{30.1}. We also have, by taking $l=m$ and $n=2m+1$, the well-known identity: 
$F_{2m+1}= \sum \limits_{j\geq 0} \dbinom{m}{j}F_{j+1}.$  

\begin{theorem}	Let $i,k\geq 1$ and $n\geq k-1$ be integers, such that $k=2i$. Then,
	\begin{equation} 
	(2+r^{2})F_{r,1}^{i}(k,n)=F_{r,1}^{i}(k,n+k)+F_{r,1}^{i}(k,n-k).
	\label{31.1}
	\end{equation}
\end{theorem}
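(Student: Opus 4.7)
The plan is to derive this identity directly from the recurrence, following the paper's style when an identity is an immediate algebraic consequence of \eqref{23.1}. Since $k=2i$ and $s=1$, the defining recurrence simplifies to
$$F^{i}_{r,1}(k,m)=rF^{i}_{r,1}(k,m-i)+F^{i}_{r,1}(k,m-2i)$$
for every $m\geq k-1$, and I would keep this form in mind throughout.

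First, I would expand $F^{i}_{r,1}(k,n+k)$ by applying the recurrence twice. One step gives $F^{i}_{r,1}(k,n+k)=rF^{i}_{r,1}(k,n+i)+F^{i}_{r,1}(k,n)$, and then applying the recurrence to $F^{i}_{r,1}(k,n+i)$ (which is valid since $n\geq k-1\geq i-1$, so $n+i\geq k-1$) yields $F^{i}_{r,1}(k,n+i)=rF^{i}_{r,1}(k,n)+F^{i}_{r,1}(k,n-i)$. Substituting back,
$$F^{i}_{r,1}(k,n+k)=(r^{2}+1)F^{i}_{r,1}(k,n)+rF^{i}_{r,1}(k,n-i).$$

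Second, I would use the recurrence in reverse on $F^{i}_{r,1}(k,n)$ itself: from $F^{i}_{r,1}(k,n)=rF^{i}_{r,1}(k,n-i)+F^{i}_{r,1}(k,n-k)$ (applicable because $n\geq k-1$), I get $F^{i}_{r,1}(k,n-k)=F^{i}_{r,1}(k,n)-rF^{i}_{r,1}(k,n-i)$. Adding this to the expression for $F^{i}_{r,1}(k,n+k)$, the two copies of $rF^{i}_{r,1}(k,n-i)$ cancel, and the right-hand side collapses to $(r^{2}+2)F^{i}_{r,1}(k,n)$, which is \eqref{31.1}.

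The only real care point, and therefore the main (minor) obstacle, is verifying the range of validity: every invocation of the recurrence must happen at an index $\geq k-1$. The hypothesis $n\geq k-1=2i-1$ is exactly what is needed to legitimize both the backward step (reading the recurrence as a formula for $F^{i}_{r,1}(k,n-k)$) and the forward expansions, since then $n\pm i$ and $n+k$ all lie in the recursive regime. A purely combinatorial bijection in terms of tilings is conceivable but unnatural here because of the coefficient $r^{2}+2$, so I would stick with the recurrence-based derivation, which mirrors the proof given for the analogous identity $F^{i}_{r,s}(k,n)=(s-r)F^{i}_{r,s}(k,n-k)+rF^{i}_{r,s}(k,n-i)+F^{i}_{r,s}(k,n-k+i)-sF^{i}_{r,s}(k,n-2k+i)$ stated just above.
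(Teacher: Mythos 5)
Your derivation is correct: with $k=2i$ and $s=1$ the recurrence reads $F^{i}_{r,1}(k,m)=rF^{i}_{r,1}(k,m-i)+F^{i}_{r,1}(k,m-2i)$, and your three invocations of it occur at the indices $n+k$, $n+i$, and $n$, all of which are $\geq k-1$ under the hypothesis $n\geq k-1$ (and the lower arguments $n-i$, $n-k$ stay $\geq -1$, so every term is defined). The cancellation of $rF^{i}_{r,1}(k,n-i)$ then yields $(2+r^{2})F^{i}_{r,1}(k,n)$ exactly as claimed. However, your route is genuinely different from the paper's: despite your remark that a tiling argument would be unnatural because of the coefficient $2+r^{2}$, the paper does prove this identity combinatorially, by setting up a $1$-to-$(2+r^{2})$ correspondence between type $\mathcal{F}$ tilings of an $(n+1)$-board and the tilings counted by $F^{i}_{r,1}(k,n+k)$ together with those counted by $F^{i}_{r,1}(k,n-k)$; concretely, a tiling of the $(n+k+1)$-board is classified by its ending (a final $k$-rectangle, two or more final $i$-rectangles, or an $i$-rectangle preceded by a $k$-rectangle), contributing $1$, $r^{2}$, and the $i$-rectangle-ending tilings respectively, while the tilings of the $(n+1)$-board ending in a $k$-rectangle are matched with $F^{i}_{r,1}(k,n-k)$ by deleting that piece. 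Your algebraic argument is shorter, mechanical, and makes the range of validity transparent; the paper's argument is more delicate (the case analysis on endings must be exhaustive and disjoint) but fits its stated program of explaining where each identity comes from at the level of tilings, which is the added insight the combinatorial proof buys.
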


\begin{proof} In order to prove this identity we establish a $1$-to-$(2+r^{2})$ correspondence between type $\mathcal{F}$ tilings of an $(n+1)$-board and either type $\mathcal{F}$ tilings of an $(n+k+1)$-board or type $\mathcal{F}$ tilings of an $(n-k+1)$-board, i.e., from each tiling enumerated by $F_{r,1}^{i}(k,n)$ we create $2+r^{2}$ tilings counted by either by $F_{r,1}^{i}(k,n+k)$ or by $F_{r,1}^{i}(k,n-k)$, in a unique way.
	
	Given a tiling enumerated by $F_{r,1}^{i}(k,n+k)$, we can determine a type $\mathcal{F}$ tilings of an $(n+1)$-board examining the last pieces and removing:
	\begin{itemize}
		\setlength\itemsep{-0.2cm}
		\item[(i)] the last $k$-rectangle, if it ends with one $k$-rectangle,
		\item[(ii)] the last two $i$-rectangles, if it ends with two or more $i$-rectangles or 
		\item[(iii)] the last $k$-rectangle, if it ends with one $i$-rectangle preceded by one $k$-rectangle.
	\end{itemize}
	
	In Case (i) we obtain a unique tiling enumerated by $F_{r,1}^{i}(k,n)$. Case (ii) results in $r^{2}F_{r,1}^{i}(k,n)$ tilings while Case (iii) gives us tilings counted by $F_{r,1}^{i}(k,n)$ ending with one $i$-rectangle. To finish the proof of \eqref{31.1}, we notice that the remaining tilings enumerated by $F_{r,1}^{i}(k,n)$ end with one $k$-rectangle and come from those tilings counted by $F_{r,1}^{i}(k,n-k)$ by just adding a $k$-rectangle at the end. 
	
	Reciprocally, for each tiling enumerated by $F_{r,1}^{i}(k,n)$ we can produce a type $\mathcal{F}$ tiling of an $(n+k+1)$-board either by adding a $k$-rectangle or adding two $i$-rectangles at the end. In the last case, we get $r^{2}$ type $\mathcal{F}$ tilings of an $(n+k+1)$-board, since all $i$-rectangle can be colored using one of $r$ colors and, in the former case, one tiling, since $s=1$. If a type $\mathcal{F}$ tiling of an $(n+1)$-board ends with an $i$-rectangle, we insert a $k$-rectangle before the last piece in order to obtain a type $\mathcal{F}$ tiling of an $(n+k+1)$-board. Finally, if a type $\mathcal{F}$ tilingof an $(n+1)$-board ends with a $k$-rectangle, then we remove this piece to create a tiling enumerated by $F_{r,1}^{i}(k,n-k)$.
\end{proof}

Two known identities to the Fibonacci and Pell numbers can be obtained from \eqref{31.1}: $3F_{n}\!=\!F_{n+2}\!+\!F_{n-2}$ (see \cite{benjamin2008tiling}) and $6P_{n}\!=\!P_{n+2}\!+\!P_{n-2}$,    (see \cite{koshpell}), $\forall n\! \geq \!2$.

\begin{theorem}	Let $i,k \geq 1$ and $n\geq max\lbrace i,k \rbrace-1$ be integers. Let $l$ be the remainder of the division of $n+1$ by $k$. Then,
	\begin{equation}
	F_{r,s}^{i}(k,n)=t(i,k,n)+\sum\limits_{j=0}^{\lfloor \frac{n+1-i}{k} \rfloor} 
	rs^{j}F_{r,s}^{i}(k,n-i-jk),
	\label{31.2}
	\end{equation}
	where
	\begin{equation*}
	t(i,k,n)=
	\left \{ 
	\begin{array}{ll}
	s^{\lfloor\frac{n+1}{k}\rfloor}, & \textnormal{se} \ i\geq k \ \textnormal{ou} \ \ k>i \ \textnormal{e} \ l \leq i-1 \\
	0, & \textnormal{se} \ k>i \ \textnormal{e} \ l > i-1. \\
	\end{array}
	\right.
	\end{equation*}
\end{theorem}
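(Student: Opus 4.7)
The plan is to use the tiling interpretation of $F^i_{r,s}(k,n)$ from Section 3.1 (Theorem 3.1) and decompose the set of type $\mathcal{F}$ tilings of an $(n+1)$-board according to whether an $i$-rectangle appears. This mirrors the strategy used in the preceding theorems, where the rightmost appearance of an $i$-rectangle is the pivotal feature of the tiling.

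First, I would handle the tilings that contain at least one $i$-rectangle. Consider such a tiling and locate its rightmost $i$-rectangle. To the right of this $i$-rectangle there can be only $k$-rectangles; let $j$ denote how many. Then the rightmost $i$-rectangle covers positions $n{+}1{-}jk{-}i{+}1$ through $n{+}1{-}jk$, and everything to its left forms an independent type $\mathcal{F}$ tiling of a board of length $(n+1)-i-jk$, which is counted by $F^i_{r,s}(k,n-i-jk)$. The rightmost $i$-rectangle contributes a factor of $r$ for its color, and the $j$ trailing $k$-rectangles contribute $s^j$. The range $0\le j\le\lfloor(n+1-i)/k\rfloor$ is exactly what is required for the preceding board to have non-negative length (the case $n-i-jk=-1$ being absorbed by the initial value $F^i_{r,s}(k,-1)=1$). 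Summing over $j$ yields the sum on the right-hand side of \eqref{31.2}.

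Next I would handle the tilings with no $i$-rectangle, which are precisely those enumerated by $t(i,k,n)$. Such a tiling consists of some number $b$ of black squares, followed by $k$-rectangles. Since black squares can appear only in the first $\min\{i,k\}-1$ positions, we need $0\le b\le \min\{i,k\}-1$, and since the remaining $n+1-b$ positions must be divisible into $k$-rectangles, we need $b\equiv n+1\pmod{k}$, forcing $b=l$. Hence such tilings exist iff $l\le\min\{i,k\}-1$: when $i\ge k$ this is automatic because $l\le k-1=\min\{i,k\}-1$; when $k>i$ this requires $l\le i-1$. In every valid case there is a unique choice of $b$ and $\lfloor(n+1)/k\rfloor$ trailing $k$-rectangles each with $s$ color choices, giving $s^{\lfloor(n+1)/k\rfloor}$ tilings; otherwise there are none. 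This is exactly the piecewise definition of $t(i,k,n)$.

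Adding the two contributions produces \eqref{31.2}. The main obstacle is not the core bijection (which closely parallels the arguments for the earlier $i$-rectangle-based theorems in this section) but rather the correct accounting of the residue condition $l\le\min\{i,k\}-1$ when counting the $i$-rectangle-free tilings, since this is where the dichotomy between $i\ge k$ and $k>i$ in the definition of $t(i,k,n)$ arises.
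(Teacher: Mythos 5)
Your proof is correct and follows essentially the same route as the paper: conditioning on the rightmost $i$-rectangle (followed by $j$ trailing $k$-rectangles, giving the terms $rs^{j}F_{r,s}^{i}(k,n-i-jk)$) and letting $t(i,k,n)$ count the $i$-rectangle-free tilings. Your residue argument $b=l\le\min\{i,k\}-1$ for the black squares just spells out the case analysis that the paper asserts directly.
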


\begin{proof} We count the number of type $\mathcal{F}$ tilings of an $n$-board by considering the rightmost appearance of an $i$-rectangle, if any. We note that $t(i,k,n)$ is the number of tilings without $i$-rectangles. Indeed, if $i\geq k$ or $k>i$ and $l \leq i-1$ we immediately have  $t(i,k,n)=s^{\lfloor\frac{n+1}{k}\rfloor}$, while the case $k>i$ and $l > i-1$ does not gives us any tiling without $i$-rectangles. The remaining tilings end with an $i$-rectangle followed by $k$-rectangles. There are $rF_{r,s}^{i}(k,n-i)$ tilings whose last piece is an $i$-rectangle, $rsF_{r,s}^{i}(k,n-i-k)$ ending with an $i$-rectangle followed by one $k$-rectangle, $rs^{2}F_{r,s}^{i}(k,n-i-2k)$ ending with an $i$-rectangle followed by two $k$-rectangles, $\ldots$ , $rs^{\lfloor \frac{n+1-i}{k} \rfloor}F_{r,s}^{i}(k,n-i-\lfloor \frac{n+1-i}{k} \rfloor k )$ ending with an $i$-rectangle followed by $\lfloor \frac{n+1-i}{k} \rfloor$ \ $k$-rectangles. By adding up all these possibilities, we complete the proof. 
\end{proof}

A new identity to the Jacobsthal numbers is obtained from \eqref{31.2}:

\begin{corollary}
	$$J_{n}=1+2\sum\limits_{j=1}^{n-2}J_{j}, \forall n\geq 3.$$
\end{corollary}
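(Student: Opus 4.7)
The plan is to mimic the combinatorial strategy of the preceding theorem in the Jacobsthal specialization of $F^{i}_{r,s}(k,n)$. According to Table~\ref{tab1}, the choice $i=1$, $k=2$, $r=1$, $s=2$ gives $F^{1}_{1,2}(2,n-2)=J_{n}$ for $n\ge 1$, and by Theorem~\ref{Interpret1} this number enumerates tilings of an $(n-1)$-board that use $1$-rectangles (with a single colour) and $2$-rectangles (each with one of two colours); no black square appears, because $\min\{i,k\}-1=0$.

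To extract the identity, I would classify each such tiling by the rightmost $2$-rectangle. This is the same rightmost-piece trick employed in the theorem immediately above, with the twist that for Jacobsthal the clean factor of $2$ on the right-hand side arises from tracking the rightmost $k$-rectangle rather than the rightmost $i$-rectangle. Exactly one tiling contains no $2$-rectangle, namely the all-$1$-rectangle tiling. If the rightmost $2$-rectangle occupies positions $p$ and $p+1$ with $1\le p\le n-2$, then positions $p+2,\dots,n-1$ are forced to be $1$-rectangles, the $2$-rectangle itself admits $2$ colour choices, and positions $1,\dots,p-1$ form an arbitrary tiling of a $(p-1)$-board, counted by $F^{1}_{1,2}(2,p-2)=J_{p}$.

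Summing over the admissible positions of the rightmost $2$-rectangle gives
\[
J_n \;=\; 1 \;+\; 2\sum_{p=1}^{n-2} J_p,
\]
which is the claimed identity. The boundary case $p=1$ is handled by the initial value $J_{1}=F^{1}_{1,2}(2,-1)=1$ from Table~\ref{tab1}; in particular the formula reduces at $n=3$ to $J_3=1+2J_1=3$, which is correct. I do not anticipate any serious obstacle: the argument is a direct specialization of the rightmost-piece count used throughout the section, and should a purely algebraic route be preferred, telescoping the Jacobsthal recurrence $J_n-J_{n-1}=2J_{n-2}$ from $n'=3$ to $n$ and using $J_2=1$ yields the same identity in a single line.
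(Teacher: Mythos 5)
Your proof is correct and is essentially the paper's argument: the corollary is the specialization of identity \eqref{31.2} (proved there by locating the rightmost distinguished rectangle), and your classification by the rightmost two-coloured domino is exactly that decomposition, only with the domino treated as the $k$-piece ($i=1,k=2,r=1,s=2$) rather than as the $i$-piece ($i=2,k=1,r=2,s=1$), which is the substitution under which \eqref{31.2} literally yields the stated identity. The count, the boundary case $p=1$, and the telescoping alternative all check out.
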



\begin{theorem} Let $i,k\geq 1$ and $n\geq max\lbrace k-i-1,-1 \rbrace$ be integers. Then, 
	$$F_{r,s}^{i}(k,n+ik)=r^{k}F_{r,s}^{i}(k,n)+\sum\limits_{j=0}^{k-1}r^{j}sF_{r,s}^{i}(k,n+ik-k-ji).$$ 
\end{theorem}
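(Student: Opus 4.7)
The plan is to prove the identity combinatorially via the tiling interpretation of Theorem \ref{Interpret1}, which tells us $F^{i}_{r,s}(k,N)$ counts the type $\mathcal{F}$ tilings of an $(N+1)$-board. Accordingly, $F^{i}_{r,s}(k,n+ik)$ enumerates tilings of an $(n+ik+1)$-board, and I would partition these tilings by inspecting their last $k$ pieces.

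First, I would handle the class of tilings whose last $k$ pieces are all $i$-rectangles. Stripping off these $k$ rectangles leaves a valid type $\mathcal{F}$ tiling of an $(n+1)$-board, and each removed $i$-rectangle contributes $r$ color choices, so this class yields exactly $r^{k}F^{i}_{r,s}(k,n)$ tilings, matching the first summand on the right-hand side.

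Second, I would handle the complementary class, where the last $k$ pieces are not all $i$-rectangles. Let $j$ be the exact number of trailing $i$-rectangles, so $0\le j\le k-1$. The key claim is that the $(j+1)$-th piece from the right is necessarily a $k$-rectangle: the only alternative allowed by the tiling rules is a black square, which can only occupy one of the first $\min\{i,k\}-1$ positions. This piece ends at position $n+ik+1-ji$, and one checks (splitting into the subcases $k\le i$ and $k>i$, where $\min\{i,k\}-1$ equals $k-1$ or $i-1$ respectively) that the hypothesis $n\ge\max\{k-i-1,-1\}$ forces $n+ik+1-ji>\min\{i,k\}-1$ for all $j\le k-1$, ruling out the black-square possibility. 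Removing the trailing $j$ $i$-rectangles together with the $k$-rectangle thus produces a valid tiling of an $(n+ik-k-ji+1)$-board, accounting for $r^{j}sF^{i}_{r,s}(k,n+ik-k-ji)$ tilings for each fixed $j$. Summing over $j=0,\dots,k-1$ gives the second term.

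The main obstacle is the single verification that the $(j+1)$-th piece from the right cannot be a black square; this is the bookkeeping exercise that justifies the somewhat asymmetric lower bound $n\ge\max\{k-i-1,-1\}$. Once that is established, the partition is manifestly complete and disjoint, and summing the two contributions completes the proof.
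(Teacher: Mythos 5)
Your proof is correct and follows essentially the same route as the paper's: classify the type $\mathcal{F}$ tilings of the $(n+ik+1)$-board by whether they end with at least $k$ trailing $i$-rectangles (giving $r^{k}F^{i}_{r,s}(k,n)$) or with exactly $j\le k-1$ of them preceded by a $k$-rectangle (giving $r^{j}sF^{i}_{r,s}(k,n+ik-k-ji)$), then sum over $j$. Your explicit check that the $(j+1)$-th piece from the right cannot be a black square, which is where the hypothesis $n\ge\max\{k-i-1,-1\}$ enters, is a detail the paper leaves implicit, but it does not change the argument.
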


\begin{proof} The number of type $\mathcal{F}$ tilings enumerated by $F_{r,s}^{i}(k,n+ik)$ ending with $k$ or more $i$-rectangles equals $r^{k}F_{r,s}^{i}(k,n)$. The remaining tilings end with exactly  $\textit{j}$ $i$-rectangles, where $0 \leq j \leq k-1$. By removing these $\textit{j}$ $i$-rectangles and the last $k$-rectangle we are left with tilings enumerated by $r^{j}sF_{r,s}^{i}(k,n+ik-k-ji)$. In order to finish the proof, we just have to add up on $j$.
\end{proof}


\begin{remark} From this theorem we have the following identity for the Padovan numbers:
	$$Pv(n+7)=Pv(n+1)+Pv(n)+Pv(n+2)+Pv(n+4),$$
	from where, by using the recurrence relation definig $Pv(n)$, it is easy to see that the period of the Padovan numbers modulus 2 is 7.
\end{remark}


\begin{theorem} Let $i,k,m\geq 1$ and $n\geq max\lbrace i-k-1, -1 \rbrace$ be integers. Then,
	$$F_{r,s}^{i}(k,n+mk)=s^{m}F_{r,s}^{i}(k,n)+\sum\limits_{j=0}^{m-1}rs^{j}F_{r,s}^{i}(k,n+mk-jk-i).$$
\end{theorem}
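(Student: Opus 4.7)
The plan is to give a combinatorial tiling proof in the same spirit as the preceding theorems of Section \ref{Sec3}, by classifying type $\mathcal{F}$ tilings of an $(n+mk+1)$-board according to what the last pieces look like. The right-hand side splits into a ``leading'' term $s^{m}F_{r,s}^{i}(k,n)$ and a sum indexed by $j\in\{0,1,\ldots,m-1\}$, which strongly suggests conditioning on whether an $i$-rectangle appears among the last $m$ pieces of the tiling, and, if so, how many $k$-rectangles trail it.

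Concretely, I would argue as follows. First I dispose of the case in which the last $m$ pieces are all $k$-rectangles: removing these $m$ $k$-rectangles leaves an arbitrary type $\mathcal{F}$ tiling of an $(n+1)$-board, and since each removed $k$-rectangle carried one of $s$ colors, this class contributes $s^{m}F_{r,s}^{i}(k,n)$. Every other type $\mathcal{F}$ tiling of the $(n+mk+1)$-board has at least one $i$-rectangle among its last $m$ pieces, so we can let $j$ be the number of $k$-rectangles strictly to the right of the rightmost such $i$-rectangle; by construction $0\le j\le m-1$. Peeling off those $j$ trailing $k$-rectangles together with the $i$-rectangle that precedes them removes $jk+i$ positions and contributes a factor of $rs^{j}$ from the color choices, leaving behind an arbitrary type $\mathcal{F}$ tiling of an $(n+mk-jk-i+1)$-board, enumerated by $F_{r,s}^{i}(k,n+mk-jk-i)$. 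Summing over $j$ yields the stated sum.

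The bookkeeping step that needs a bit of care is checking that the quantity $F_{r,s}^{i}(k,n+mk-jk-i)$ is meaningful for every admissible $j$, i.e., that $n+mk-jk-i\ge -1$. The worst case is $j=m-1$, which gives $n+k-i\ge -1$, exactly the hypothesis $n\ge\max\{i-k-1,-1\}$; this is the only role the lower bound on $n$ plays. I would also briefly note that the two classes (trailing block of $m$ $k$-rectangles versus presence of an $i$-rectangle in the last $m$ pieces) are clearly disjoint and exhaustive, so no tiling is over- or under-counted.

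I do not expect a substantive obstacle: the identity is structurally parallel to the previous ``rightmost appearance'' theorems in this section (notably equations \eqref{r2}, \eqref{r3}, and \eqref{31.2}), and once the bijection is set up the rest is just reading off the color factors. The only mild subtlety is verifying the boundary condition on $n$, which I would handle explicitly at the end of the proof.
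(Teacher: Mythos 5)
Your proposal is correct and follows essentially the same argument as the paper: tilings ending with at least $m$ $k$-rectangles give the term $s^{m}F_{r,s}^{i}(k,n)$, and the remaining tilings are classified by the number $j\le m-1$ of trailing $k$-rectangles, removing them together with the preceding $i$-rectangle to obtain $rs^{j}F_{r,s}^{i}(k,n+mk-jk-i)$. The paper states this (referring back to the proof of the preceding theorem) without spelling out the boundary check on $n$, which you handle explicitly.
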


\begin{proof} The number of type $\mathcal{F}$ tilings enumerated by $F_{r,s}^{i}(k,n+mk)$ ending with at least $m$ $k$-rectangles equals $s^{m}F_{r,s}^{i}(k,n)$. The proof follows as in the previous theorem.
\end{proof}


\begin{corollary}
	\textit{}
	\begin{enumerate}
		\setlength\itemsep{-0.05cm}
		\item $F_{n+m}=F_{n+2}+\sum\limits_{j=2}^{m-1}F_{n+m-j}$,  $\forall n\geq 0, m\geq 3$,
		\item $P_{n+m}=2^{m-2}P_{n+2}+\sum\limits_{j=2}^{m-1}2^{j-2}P_{n+m-j}$, $\forall n\geq 0, m\geq 3$,
		\item $Pv(n+3m)=Pv(n)+\sum\limits_{j=0}^{m-1}Pv(n+3m-3j-2)$, $\forall n, m\geq 1$,
		\item $J_{n+m}=J_{n+2}+2\sum\limits_{j=2}^{m-1}J_{n+m-j}$, $\forall n\geq 0, m\geq 3$,
		\item $u_{n+3m}=u_{n}+\sum\limits_{j=0}^{m-1}u_{n-1+3m-3j}$, $\forall n\geq 0, m\geq 1$,
		\item $\{n+2m\}_{s,t}=t^{m}\{n\}_{s,t}+\sum\limits_{j=0}^{m-1}st^{j}\{n+2m-2j-1\}_{s,t}$, $\forall n\geq 0, m\geq 1$.
	\end{enumerate}
\end{corollary}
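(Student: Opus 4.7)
The plan is to derive each of the six identities by specializing the four parameters $(i,k,r,s)$ in the preceding theorem to match the relevant named sequence (using the dictionary in Table~\ref{tab1}, together with the alternative parameterizations noted in Section~\ref{Sectn1}, such as $F^{2}_{1,1}(1,n)=F^{1}_{1,1}(2,n)=F_{n+2}$), and then performing a single index shift to express the output in the conventional indexing of that sequence. No further combinatorial argument is needed; the corollary is a translation exercise.

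For identities (1) Fibonacci, (2) Pell, and (4) Jacobsthal, the shift on the left is by $m$ rather than $2m$, so I use the ``swapped'' parameterization $(i,k)=(2,1)$ (permitted by the symmetry noted in Section~\ref{Sectn1}). Concretely, the theorem with $(i,k,r,s)=(2,1,1,1)$ together with $F^{2}_{1,1}(1,n)=F_{n+2}$ gives
$$F_{n+m+2}=F_{n+2}+\sum_{j=0}^{m-1}F_{n+m-j},$$
and a substitution $m\mapsto m-2$ with a companion shift $j\mapsto j-2$ in the summation recovers identity (1). For (2) I pick $(i,k,r,s)=(2,1,1,2)$ so that $F^{2}_{1,2}(1,n)=P_{n+2}$ and the weights $s^{m}$, $rs^{j}$ of the theorem become $2^{m}$, $2^{j}$, producing the $2^{m-2}$ and $2^{j-2}$ appearing in the Pell identity after reindexing. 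For (4) I pick $(i,k,r,s)=(2,1,2,1)$, so that $F^{2}_{2,1}(1,n)=J_{n+2}$, the front coefficient $s^{m}$ collapses to $1$, and the leading constant $r=2$ produces the factor $2$ in front of the Jacobsthal sum.

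For identities (3) Padovan and (5) Narayana, the left-hand shift is by $3m$, matching $k=3$, so I use the natural Table~\ref{tab1} parameterizations $(i,k,r,s)=(2,3,1,1)$ and $(1,3,1,1)$. Applying the theorem and invoking $F^{2}_{1,1}(3,n)=Pv(n+2)$ (respectively $F^{1}_{1,1}(3,n)=u_{n+2}$) yields the relations stated in the corollary after the translation $n\mapsto n-2$. For identity (6), the polynomial generalization $\{n\}_{s,t}$ matches $(i,k,r,s)=(1,2,s,t)$ via $F^{1}_{s,t}(2,n)=\{n+2\}_{s,t}$; the theorem delivers the desired identity directly, again modulo a $n\mapsto n-2$ shift.

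The only obstacle is routine bookkeeping: in each case one must check that the hypothesis $n\ge\max\{i-k-1,-1\}$ of the theorem, once combined with the index shift, is compatible with the range stated in each item of the corollary (it is, since the most restrictive choice here, $i-k-1$, never exceeds $0$ for the parameter tuples above), and that the summation bounds line up after the reindexing $j\mapsto j-2$ where applicable. Once these translations are written out, each of the six identities appears as an immediate instance of the theorem, so the corollary follows without any additional combinatorics.
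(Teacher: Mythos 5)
Your proposal is correct and matches the paper's (implicit) derivation: the corollary is stated without proof as a direct specialization of the preceding theorem, and your parameter choices --- including the swapped tuples with $k=1$ needed to turn the $mk$-shift into an $m$-shift for the Fibonacci, Pell, and Jacobsthal items --- together with the reindexing $j\mapsto j-2$ are exactly the intended translations. (A negligible slip: under the paper's convention $Pv(0)=Pv(1)=Pv(2)=1$ one has $F^{2}_{1,1}(3,n)=Pv(n+1)$ rather than $Pv(n+2)$, but since the identity is translation-invariant this changes nothing.)
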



The next theorem has, as a consequence, a great number of identities involving sequences generalized by $F_{r,s}^{i}(k,n)$.

\begin{theorem}	Let $i,k\geq 1$ and $n\geq 2max\lbrace i,k \rbrace-1$ be integers. Let $l$ be the remainder of the division of $n+1$ by $i$. Then,
	\begin{equation}
	F_{r,s}^{i}(k,n)=u(i,k,n)+\sum\limits_{j=0}^{\lfloor \frac{n+1-2k}{i}\rfloor}\sum\limits_{t=0}^{\lfloor \frac{n+1-2k-ij}{i}\rfloor}s^{2}r^{j+t}F_{\lbrace r,s \rbrace}^{(i)}(k,n-2k-ti-ji),
	\label{31.3}
	\end{equation}
	where
	\begin{equation*}
	u(i,k,n)=
	\left \{ 
	\begin{array}{ll}
	r^{\lfloor\frac{n+1}{i}\rfloor}\!+\!sr^{\lfloor \frac{n+1-k}{i}\rfloor}(\lfloor \frac{n+1-k}{i}\rfloor\!+\!1), & \textnormal{\textit{if}} \ k\geq i \ \textnormal{\textit{or}} \ i>k, \  l\leq 2k\!-\!i\!-\!1\\
	& \\
	r^{\lfloor\frac{n+1}{i}\rfloor} & \textnormal{\textit{if}} \ i>k \ \textnormal{\textit{and}} \ 2k\!-\!i\!-\!1 < l \leq k\!-\!1\\			
	&\\
	sr^{\lfloor \frac{n+1-k}{i}\rfloor}(\lfloor \frac{n+1-k}{i}\rfloor+1), & \textnormal{\textit{if}} \ i>k \ \textnormal{\textit{and}} \ k\!-\!1<l\leq 2k\!-\!1\\
	&\\
	0,& \textnormal{\textit{if}} \ i>k \  \textnormal{\textit{and}} \ l>2k\!-\!1.\\
	\end{array}
	\right.
	\end{equation*}
\end{theorem}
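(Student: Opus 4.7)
The plan is to extend the rightmost-piece strategy used in the preceding theorems of this section, but this time tracking the two rightmost $k$-rectangles simultaneously rather than just one. I would partition the type $\mathcal{F}$ tilings of an $(n+1)$-board into three disjoint families according to the number of $k$-rectangles they contain: at least two, exactly one, or zero. The main double sum on the right-hand side of \eqref{31.3} should count the first family, while $u(i,k,n)$ collects the other two.

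For the "at least two $k$-rectangles" family, I would let $t \geq 0$ denote the number of $i$-rectangles lying to the right of the last $k$-rectangle, and $j \geq 0$ the number of $i$-rectangles strictly between the last two $k$-rectangles. Excising these two $k$-rectangles together with the $t+j$ $i$-rectangles just described leaves a type $\mathcal{F}$ tiling of an $(n+1-2k-ti-ji)$-board, and the reverse reconstruction is unique. The two $k$-rectangles contribute $s^{2}$ colour choices and the $t+j$ removed $i$-rectangles contribute $r^{t+j}$, producing exactly the summand $s^{2}r^{j+t}F_{r,s}^{i}(k,n-2k-ti-ji)$. The summation ranges $0 \leq j \leq \lfloor(n+1-2k)/i\rfloor$ and $0 \leq t \leq \lfloor(n+1-2k-ij)/i\rfloor$ come from requiring the remaining board to have non-negative length, and the hypothesis $n \geq 2\max\{i,k\}-1$ ensures the family is actually populated.

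The term $u(i,k,n)$ then absorbs the tilings with zero or one $k$-rectangle. A tiling with no $k$-rectangle consists of some black squares (confined to the first $\min\{i,k\}-1$ positions) followed by $i$-rectangles, so it contributes $r^{\lfloor(n+1)/i\rfloor}$ precisely when $(n+1)\bmod i \leq \min\{i,k\}-1$. A tiling with exactly one $k$-rectangle, obtained by inserting that $k$-rectangle into any of $\lfloor(n+1-k)/i\rfloor+1$ gaps among the $i$-rectangles (after accounting for black squares), contributes $sr^{\lfloor(n+1-k)/i\rfloor}(\lfloor(n+1-k)/i\rfloor+1)$, subject to an analogous feasibility constraint $(n+1-k)\bmod i \leq \min\{i,k\}-1$.

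The principal obstacle will be verifying the four sub-cases of $u(i,k,n)$ when $i > k$, since the black-square restriction can eliminate either family. Concretely, I would translate the two feasibility constraints above into inequalities on the residue $l=(n+1)\bmod i$, showing that $l \leq 2k-i-1$ corresponds to both sub-families being allowed (and the two contributions add), $2k-i-1 < l \leq k-1$ to only the zero-$k$-rectangle sub-family being allowed, $k-1 < l \leq 2k-1$ to only the one-$k$-rectangle sub-family being allowed, and $l > 2k-1$ to neither being allowed. Once those four residue conditions are matched to the stated expression for $u(i,k,n)$, summing the three families gives \eqref{31.3}.
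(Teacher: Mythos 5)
Your proposal is correct and follows essentially the same route as the paper's proof: both classify type $\mathcal{F}$ tilings by whether they contain zero, one, or at least two $k$-rectangles, count the last family via the numbers of $i$-rectangles after and between the two rightmost $k$-rectangles to get the double sum, and account for the zero/one cases in $u(i,k,n)$ through the residue analysis of $n+1$ modulo $i$. Your feasibility conditions $(n+1)\bmod i\leq \min\{i,k\}-1$ and $(n+1-k)\bmod i\leq \min\{i,k\}-1$ translate exactly into the paper's four sub-cases when $i>k$, so the outlined verification goes through.
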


\begin{proof} We shaw show that the r.h.s. of \eqref{31.3} enumerates the same tilings as $F_{r,s}^{i}(k,n)$, considering the number of $i$-rectangles between the last two $k$-rectangles.
	
	When $k\geq i$, it is clear that there are $r^{\lfloor\frac{n+1}{i}\rfloor}$ type $\mathcal{F}$ tilings of an $(n+1)$-board not having any $k$-rectangle. The number of tilings having exactly one $k$-rectangle is $(\lfloor \frac{n+1-k}{i}\rfloor+1)sr^{\lfloor \frac{n+1-k}{i}\rfloor}$. Indeed, a tiling having one  $k$-rectangle has $\lfloor \frac{n+1-k}{i}\rfloor$ $i$-rectangles, thus the $k$-rectangle may be inserted in one of $\lfloor \frac{n+1-k}{i} \rfloor +1$ positions. Now, we consider the case $i>k$ and $l\leq k-1$. Clearly, there are $r^{\lfloor \frac{n+1}{i}\rfloor}$ tilings without $k$-rectangles. We have $n+1=l+qi=l-k+k+i+(q-1)i$, for some integer $q\geq 2$. If $l\leq 2k-i-1$, as in the previous case, there are $sr^{\lfloor \frac{n+1-k}{i}\rfloor}(\lfloor \frac{n+1-k}{i}\rfloor+1)$ tilings having exactly one $k$-rectangle. On the other hand, when $2k-i-1<l\leq k-1$ there do not exist tilings with exactly one $k$-rectangles. When $i>k$ and $l>k-1$, all tilings have a $k$-rectangle. Then, if $l\leq 2k-1$, we have $n+1=l-k+k+qi$, thus there are exactly $sr^{\lfloor \frac{n+1-k}{i}\rfloor}(\lfloor \frac{n+1-k}{i}\rfloor+1)$ tilings having only one $k$-rectangle, but when $l>2k-1$ there do not exist such tilings. Hence $u(i,k,n)$ enumerates the tilings having at most one $k$-rectangle. 
	
	Now, we count the number of the remaining tilings enumerated by $F_{r,s}^{i}(k,n)$ considering the number of $i$-rectangles between the rightmost two $k$-rectangles. There are   $s^{2}r^{j+t}F_{r,s}^{i}(k,n-2k-ti-ji)$ tilings ending with $j$ $i$-rectangles and $t$ $i$-rectangles between the last two $k$-rectangles, $t \in \{0, \ldots, \lfloor \frac{n+1-2k-ji}{i}\rfloor \}$. Adding up all these numbers we get the total of tilings counted by $F_{r,s}^{i}(k,n)$.	
\end{proof}

The identity below, see \textnormal{\cite{robson}}, follows from \eqref{31.3}:
$$F_{n}=(n-1)+\sum\limits_{j=1}^{n-4}(n-3-j)F_{j}, \forall n\geq 5.$$

\begin{corollary}
	\textit{}
	\begin{enumerate}
		\setlength\itemsep{-0.05cm}
		\item $F_{2n}=n+\sum\limits_{j=1}^{n-1}(n-j)F_{2j}, \forall n\geq 2$,
		\item $F_{2n+1}=1+\sum\limits_{j=1}^{n}(n+1-j)F_{2j-1}, \forall n\geq 1$,
		\item $P_{n}=2^{n-1}+2^{n-3}(n-2)+\sum\limits_{j=1}^{n-4}2^{n-4-j}(n-3-j)P_{j}, \forall n\geq 5$,
		\item $P_{2n}=2n+4\sum\limits_{j=1}^{n-1}(n-j)P_{2j}, \forall n\geq 2$,
		\item $P_{2n+1}=1+4\sum\limits_{j=1}^{n}(n+1-j)P_{2j-1}, \forall n\geq 1$,
		\item $Pv(2n)=n+\sum\limits_{j=0}^{n-3}(n-2-j)Pv(2j), \forall n\geq 3$,
		\item $Pv(2n+1)=(n+1)+\sum\limits_{j=0}^{n-3}(n-2-j)Pv(2j+1), \forall n\geq 3$,
		\item $Pv(3n)=1+n+\sum\limits_{j=1}^{n-1}(n-j)Pv(3j-1), \forall n\geq 2$,
		\item $Pv(3n+1)=1+\sum\limits_{j=0}^{n-1}(n-j)Pv(3j), \forall n\geq 1$,
		\item $Pv(3n+2)=1+n+\sum\limits_{j=1}^{n}(n+1-j)Pv(3j-2), \forall n\geq 1$,
		\item $J_{n}=2n-3+4\sum\limits_{j=1}^{n-4}(n-3-j)J_{j}, \forall n\geq 5$,
		\item $J_{2n}=n2^{n-1}+\sum\limits_{j=1}^{n-1}2^{n-1-j}(n-j)J_{2j}, \forall n\geq 2$,
		\item $J_{2n+1}=2^{n}+\sum\limits_{j=1}^{n}2^{n-j}(n+1-j)J_{2j-1}, \forall n\geq 1$,
		\item $u_{n}=n-2+\sum\limits_{j=1}^{n-6}(n-5-j)u_{j}, \forall n\geq 7$,
		\item $u_{3n}=\sum\limits_{j=1}^{n}(n+1-j)u_{3j-2}, \forall n\geq 1$,
		\item $u_{3n+1}=1+\sum\limits_{j=1}^{n}(n+1-j)u_{3j-1}, \forall n\geq 1$,
		\item $u_{3n+2}=n+1+\sum\limits_{j=1}^{n}(n+1-j)u_{3j}, \forall n\geq 1$,
		\item $\{n\}_{s,t}=s^{n-1}+nts^{n-3}+t^{2}\sum\limits_{j=1}^{n-4}s^{n-4-j}(n-3-j)\{j\}_{s,t}, \forall n\geq 5$,
		\item $\{2n\}_{s,t}=nts^{n-1}+t^{2}\sum\limits_{j=1}^{n-1}s^{n-j-1}(n-j)\{2j\}_{s,t}, \forall n\geq 2$, 
		\item $\{2n+1\}_{s,t}=s^{n}+t^{2}\sum\limits_{j=1}^{n}s^{n-j}(n+1-j)\{2j-1\}_{s,t}, \forall n\geq 1$.
	\end{enumerate}
	\label{corolid}
\end{corollary}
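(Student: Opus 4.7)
Each of the twenty identities is a direct consequence of \eqref{31.3}: one substitutes the appropriate quadruple $(i,k,r,s)$ from Table \ref{tab1} and chooses the argument $N$ so that $F^{i}_{r,s}(k,N)$ coincides with the left-hand side under the indexing dictionary of the table. This is exactly the procedure that produces the Fibonacci identity $F_{n}=(n-1)+\sum_{j=1}^{n-4}(n-3-j)F_{j}$ displayed just above the corollary, and it handles all twenty cases uniformly.

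To deal with the identities whose summand ranges only over a subsequence (even, odd, or a residue class mod $3$), I would exploit the fact that whenever $i\mid k$ the recurrence $F^{i}_{r,s}(k,n)=rF^{i}_{r,s}(k,n-i)+sF^{i}_{r,s}(k,n-k)$ decouples into $i$ independent copies indexed by $n\bmod i$, each of which is a shift of the target sequence; thus the specialization $(i,k)=(2,4)$ produces the even/odd identities for Fibonacci, Pell, Jacobsthal, and $\{n\}_{s,t}$ (Identities 1--2, 4--5, 12--13, 19--20), while $(i,k)=(2,3)$ handles Padovan (Identities 6--10) and $(i,k)=(1,3)$ handles Narayana (Identities 14--17). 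For the identities that sum over the entire sequence (Identities 3, 11, 14, 18), one takes the straight specialization $(i,k)=(1,2)$ or $(1,3)$ with the corresponding $r,s$ from the table.

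In every case $i\le k$, so only the first branch of $u(i,k,N)$ is active, namely $u(i,k,N)=r^{\lfloor(N+1)/i\rfloor}+sr^{\lfloor(N-k+1)/i\rfloor}\bigl(\lfloor(N-k+1)/i\rfloor+1\bigr)$; with $N$ chosen to match the left-hand side, both floors evaluate to clean linear expressions in $n$ and yield the constant term appearing on the right-hand side of each identity. The double sum in \eqref{31.3} is reduced to a single sum by the change of variables $m=j+t$: the triangular region $\{(j,t):0\le j,\ 0\le t\le\lfloor(N+1-2k-ij)/i\rfloor\}$ contains exactly $m+1$ lattice points on each antidiagonal $j+t=m$, so that $\sum_{j,t}s^{2}r^{j+t}(\cdots)=\sum_{m}(m+1)s^{2}r^{m}F^{i}_{r,s}(k,N-2k-mi)$, and a second substitution $q=N-2k-mi$ converts this into the linear-coefficient form $\sum_{q}(C-q)F^{i}_{r,s}(k,q)$ seen in each identity.

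The main obstacle is purely bookkeeping: for each of the twenty sub-identities one must pick $N$ so that every floor rounds without a correction term, then shift the summation index to match the stated bounds, then verify using the table that $F^{i}_{r,s}(k,\cdot)$ really equals the named sequence on the relevant residue class. Once the parameter dictionary is set up, no new algebraic or combinatorial input is required beyond the preceding theorem; the verification for each line of the corollary is a routine arithmetic check carried out case by case.
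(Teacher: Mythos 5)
Your overall strategy coincides with the paper's: the corollary is obtained by specializing \eqref{31.3}, with the double sum collapsed along the antidiagonals $j+t=m$ (legitimate, since $\lfloor(n+1-2k-ij)/i\rfloor=\lfloor(n+1-2k)/i\rfloor-j$), and this does handle the items that sum over consecutive indices (3, 11, 14, 18) with the specializations you name. The gap is in your parameter dictionary for the residue-class identities, and in the blanket claim that ``in every case $i\le k$, so only the first branch of $u(i,k,N)$ is active.'' In \eqref{31.3} the arguments in the sum decrease in steps of $i$; hence to obtain a sum running only over $F_{2j}$, $u_{3j}$, $Pv(3j)$, etc., you must realize the target sequence itself with the larger gap of its recurrence playing the role of $i$, which forces $i>k$. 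Your choice $(i,k,r,s)=(2,4,1,1)$ does not do this: as your own decoupling remark shows, $F^{2}_{1,1}(4,\cdot)$ restricted to a parity class is the ordinary Fibonacci sequence with consecutive indices, so \eqref{31.3} applied to it merely reproduces $F_n=(n-1)+\sum_{j=1}^{n-4}(n-3-j)F_j$, the identity displayed before the corollary; it never yields $F_{2n}=n+\sum_{j=1}^{n-1}(n-j)F_{2j}$, whose right-hand side involves only even-indexed terms. The specializations that do work are $(i,k,r,s)=(2,1,1,1)$ for items 1--2, $(2,1,1,2)$ for 4--5, $(2,1,2,1)$ for 12--13, $(2,1,t,s)$ for 19--20, $(3,2,1,1)$ for the mod-$3$ Padovan items 8--10, and $(3,1,1,1)$ for the Narayana items 15--17; among the subsequence identities only 6--7 come from an $i\le k$ choice, namely $(2,3,1,1)$.

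Consequently the branches of $u(i,k,n)$ you propose to discard are exactly the ones needed: for $i>k$ the constants in the even/odd (resp.\ mod $3$) cases come from the different branches selected by the residue $l$ of $n+1$ modulo $i$. For example, with $(i,k,r,s)=(2,1,1,1)$ and $N=2n-2$ one has $l=1$, the branch $k-1<l\le 2k-1$ gives $u=\lfloor N/2\rfloor+1=n$ (item 1), while $N=2n-1$ gives $l=0$, the branch $2k-i-1<l\le k-1$, and $u=1$ (item 2); item 15 even uses the branch $u=0$ (the case $l>2k-1$). Using only the first branch would give wrong constants in all these cases, so as written your plan fails for items 1, 2, 4, 5, 8--10, 12, 13, 15--17, 19, 20. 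A further caution that the bookkeeping is not entirely routine: some items also absorb a boundary term such as $n\cdot Pv(-1)$ into the stated constant, and items 18--20 as printed do not check numerically --- the $(2,1,t,s)$ specialization yields, e.g., $\{2n\}_{s,t}=nst^{n-1}+s^{2}\sum_{j=1}^{n-1}t^{n-1-j}(n-j)\{2j\}_{s,t}$, i.e.\ with the roles of $s$ and $t$ interchanged relative to item 19.
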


Some more interesting identities can be derived when we combine Corollary \ref{corolid} with the identities previously obtained:

\begin{corollary}
	\textit{}
	\begin{enumerate}
		\setlength\itemsep{-0.05cm}
		\item $\sum\limits_{j=0}^{\lfloor \frac{n-1}{2}\rfloor}{n-1-j \choose j}=1+\sum\limits_{j=1}^{n-2}F_{j}=(n-1)+\sum\limits_{j=1}^{n-4}(n-3-j)F_{j}, \forall n\geq 5$,
		\item $\sum\limits_{j=0}^{n-1}F_{2j+1}=n+\sum\limits_{j=1}^{n-1}(n-j)F_{2j}, \forall n\geq 2$,
		\item $\sum\limits_{j=0}^{n}F_{2j}=\sum\limits_{j=1}^{n}(n+1-j)F_{2j-1}, \forall n\geq 1$,
		\item $\sum\limits_{j=0}^{\lfloor \frac{n-1}{2}\rfloor}{n-1-j \choose j}2^{n-1-2j}=2^{n-3}(3-n)+2^{n-2}\sum\limits_{j=2}^{n-2}2^{-j}P_{j}=\sum\limits_{j=1}^{n-4}2^{n-4-j}(n-3-j)P_{j}, \forall n\geq 5$,
		\item $\sum\limits_{j=0}^{n-1}P_{2j+1}=n+2\sum\limits_{j=1}^{n-1}(n-j)P_{2j}, \forall n\geq 2$,
		\item $\sum\limits_{j=0}^{n}P_{2j}=2\sum\limits_{j=1}^{n}(n+1-j)P_{2j-1}, \forall n\geq 1$,
		\item $1+\sum\limits_{j=0}^{n-2}Pv(2j+1)=n+\sum\limits_{j=0}^{n-3}(n-2-j)Pv(2j), \forall n\geq 3$,
		\item $\sum\limits_{j=0}^{n-1}Pv(2j)=n+\sum\limits_{j=0}^{n-3}(n-2-j)Pv(2j+1), \forall n\geq 3$,
		\item $\sum\limits_{j=0}^{n-1}Pv(3j+1)=n+\sum\limits_{j=1}^{n-1}(n-j)Pv(3j-1), \forall n\geq 2$,
		\item $\sum\limits_{j=0}^{n-1}Pv(3j+2)=\sum\limits_{j=0}^{n-1}(n-j)Pv(3j), \forall n\geq 1$,
		\item $\sum\limits_{j=1}^{n}Pv(3j)=n+\sum\limits_{j=1}^{n}(n+1-j)Pv(3j-2), \forall n\geq 1$,
		\item $3+2\sum\limits_{j=2}^{n-2}J_{j}=\sum\limits_{j=0}^{\lfloor \frac{n-1}{2}\rfloor}{n-1-j\choose j}2^{j}=2n-3+4\sum\limits_{j=1}^{n-4}(n-3-j)J_{j}, \forall n\geq 5$,
		\item $\sum\limits_{j=0}^{n-1}2^{n-1-j}J_{2j+1}=n2^{n-1}+\sum\limits_{j=1}^{n-1}2^{n-1-j}(n-j)J_{2j}, \forall n\geq 2$,
		\item $\sum\limits_{j=1}^{n}2^{-j}J_{2j}=\sum\limits_{j=1}^{n}2^{-j}(n+1-j)J_{2j-1}, \forall n\geq 1$,
		\item $\sum \limits _{j=0}^{\lfloor \frac{n}{3}\rfloor} \dbinom{n-2j}{j}=n-2+\sum\limits_{j=1}^{n-6}(n-5-j)u_{j}, \forall n\geq 7$,
		\item $\sum \limits_{j=0}^{n-1}u_{3j+2}=\sum\limits_{j=1}^{n}(n+1-j)u_{3j-2}, \forall n\geq 1$,
		\item $\sum \limits_{j=1}^{n}u_{3j}=\sum\limits_{j=1}^{n}(n+1-j)u_{3j-1}, \forall n\geq 1$,
		\item $\sum\limits_{j=0}^{n}u_{3j+1}=n+1+\sum\limits_{j=1}^{n}(n+1-j)u_{3j}, \forall n\geq 1$,
		\item $\sum \limits _{k\geq 0} \dbinom{n\!-\!k\!-\!1}{k}s^{n-2k-1}t^k=s^{n-1}+nts^{n-3}+t^{2}\sum\limits_{j=1}^{n-4}s^{n-4-j}(n-3-j)\{j\}_{s,t}, \forall n\geq 5$,
		\item $st^{n-1}+\sum\limits_{j=0}^{n-2}st^{n-2-j}\{2j+3\}_{s,t}=nts^{n-1}+t^{2}\sum\limits_{j=1}^{n-1}s^{n-j-1}(n-j)\{2j\}_{s,t}, \forall n\geq 2$, 
		\item$t^{n}+\sum\limits_{j=0}^{n-1}st^{n-1-j}\{2j+2\}_{s,t}=s^{n}+t^{2}\sum\limits_{j=1}^{n}s^{n-j}(n+1-j)\{2j-1\}_{s,t}, \forall n\geq 1$.
	\end{enumerate}
\end{corollary}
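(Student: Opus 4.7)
The plan is to observe that this corollary contains no genuinely new content: each of the $21$ chained identities merely equates two (or three) distinct expressions that have independently been shown, in earlier results of this paper, to represent the same value of $F_n$, $P_n$, $J_n$, $Pv(n)$, $u_n$, or $\{n\}_{s,t}$. Thus the proof is transitive bookkeeping. For each item of the corollary I would cite two earlier identities: one supplying the left side $=X_n$ and one supplying the right side $=X_n$, and conclude by transitivity.

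More concretely, I would group the items by which prior identity provides each side. The middle or left expression in items involving plain sums (items $2,3,5,6,7,8,9,10,11,13,14,16,17,18$) is always one of the summation identities displayed right after Theorem \ref{9.1} and the theorems in its wake (the lists such as $\sum F_j = F_{n+2}-1$, $\sum F_{2j}=F_{2n+1}-1$, $\sum u_j = u_{n+3}-1$, $Pv(3n+2)=\sum Pv(3j)$, and their Pell/Jacobsthal analogues in the adjacent corollary). The right-hand (or middle) expression is then always the corresponding entry of Corollary~\ref{corolid}. The items starting with a binomial-coefficient sum (items $1,4,12,15,19$) use, for that leftmost piece, the closed formula \eqref{r1} specialized as in the corollary immediately after that theorem: taking $(i,r,s,k)=(1,1,1,2)$ gives $F_n=\sum_{j}\binom{n-1-j}{j}$, taking $(1,2,1,2)$ gives the Pell version with weights $2^{n-1-2j}$, $(1,1,2,2)$ gives the Jacobsthal version with weights $2^j$, $(2,1,1,3)$ gives $u_n=\sum_j\binom{n-2j}{j}$, and the two-parameter version $\{n\}_{s,t}$ follows likewise. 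These, combined with the summation identity and the matching line of Corollary \ref{corolid}, yield the required three-way chain.

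The same template handles items $20$ and $21$ for $\{n\}_{s,t}$: the left side comes from the $\{2n+3\}_{s,t}$ and $\{2n+4\}_{s,t}$ identities proved earlier (the corollaries right after the two theorems giving \eqref{r3} and \eqref{r4}), and the right side is the $\{2n\}_{s,t}$ and $\{2n+1\}_{s,t}$ identities of Corollary \ref{corolid}. So the only thing to check per item is that the indexing ranges and the constant terms line up after the shift; this is where the main (and only) obstacle lies, since several of the cited identities are stated with slightly different summation bounds (starting at $j=0$ versus $j=1$, $j=-1$ versus $j=-2$, etc.), and one must verify that the boundary terms absorb correctly into the additive constants $n-1$, $n-2$, $n$, $n+1$, $n2^{n-1}$, $2^n$, $s^{n-1}+nts^{n-3}$ appearing on the right-hand sides.

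My proof would therefore proceed as follows: fix one representative item, say item $1$, and carry out the verification in full, displaying the two chain links
\[
\sum_{j=0}^{\lfloor(n-1)/2\rfloor}\binom{n-1-j}{j}=F_n,\qquad 1+\sum_{j=1}^{n-2}F_j = F_n,\qquad (n-1)+\sum_{j=1}^{n-4}(n-3-j)F_j=F_n,
\]
the first from \eqref{r1} with $(i,r,s,k)=(1,1,1,2)$, the second by reindexing $\sum_{j=1}^{m}F_j=F_{m+2}-1$, and the third directly from item $1$ of Corollary \ref{corolid}. Then I would remark that each of the remaining $20$ items is proved by the same transitive procedure, pairing the relevant summation identity with the corresponding line of Corollary \ref{corolid} (or, for the $\{n\}_{s,t}$ entries, with the corollaries following the $\{2n+2\}_{s,t}$, $\{2n+3\}_{s,t}$ and $\{2n+4\}_{s,t}$ theorems), and would tabulate the pairings in a short table so the reader can check each case mechanically.
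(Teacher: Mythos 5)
Your proposal is correct and matches the paper's intent exactly: the paper offers no separate proof for this corollary, introducing it only with the remark that these identities follow by combining Corollary~\ref{corolid} with the identities previously obtained, which is precisely the transitive pairing you describe (each side independently equals the same $F_n$, $P_n$, $J_n$, $Pv(n)$, $u_n$, or $\{n\}_{s,t}$ value via the closed formula, the earlier summation identities, or Corollary~\ref{corolid}). Your worked example for item~1 and the proposed tabulation of pairings, with attention to boundary terms and index shifts, is exactly the bookkeeping needed.
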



\begin{theorem} Let $i,k\geq 1 $ and $n\geq 2\max \lbrace i,k \rbrace-1$ be integers. Let $l$ be the remainder of the division of $n+1$ by $i$. Then, 
	\begin{equation}
	\begin{array}{lcl}
	F_{r,s}^{i}(k,n) & = & v(i,k,n)+ \sum\limits_{j=0}^{\lfloor \frac{n+1-2k}{i}\rfloor}r^{j}s^{2}F_{r,s}^{i}(k,n-2k-ji) \\ & & +\sum\limits_{j=0}^{\lfloor \frac{n+1-k-i}{i}\rfloor}r^{j+1}sF_{r,s}^{i}(k,n-i-k-ji), \end{array}
	\label{31.4}
	\end{equation}
	where
	\begin{equation*}
	v(i,k,n)=
	\left \{ 
	\begin{array}{ll}
	r^{\lfloor \frac{n+1}{i}\rfloor}+sr^{\lfloor \frac{n+1-k}{i}\rfloor}, &  \textnormal{\textit{if}} \ k\geq i\ \textnormal{\textit{or}} \ i>k \ \textnormal{\textit{and}} \ l\leq 2k-i-1\\
	r^{\lfloor \frac{n+1}{i}\rfloor}, & \textnormal{\textit{if}} \ i>k \ \textnormal{\textit{and}} \ 2k-i-1<l\leq k-1 \\
	sr^{\lfloor \frac{n+1-k}{i}\rfloor}, & \textnormal{\textit{if}} \ i>k\ \textnormal{\textit{and}} \ k\leq l\leq 2k-1 \\
	{0}, & {\textnormal{\textit{if}} \ i>k \ \textnormal{\textit{and}} \ l>2k-1.}\\
	\end{array}
	\right.
	\end{equation*}
\end{theorem}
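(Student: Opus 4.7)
The plan is to adapt the combinatorial strategy of the previous theorem by partitioning the type $\mathcal{F}$ tilings of an $(n+1)$-board into four disjoint classes according to the rightmost $k$-rectangle: (A) tilings with no $k$-rectangle at all; (B) tilings whose unique $k$-rectangle is the final piece; (C) tilings containing at least two $k$-rectangles whose last piece is a $k$-rectangle; and (D) tilings in which the rightmost $k$-rectangle is followed by at least one $i$-rectangle. These classes are pairwise disjoint and exhaust all tilings enumerated by $F_{r,s}^i(k,n)$, so summing their cardinalities will recover the identity.

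For class (C), I would refine by $j \ge 0$, the number of $i$-rectangles lying strictly between the two rightmost $k$-rectangles. Removing both of these $k$-rectangles together with the $j$ intermediate $i$-rectangles leaves an arbitrary type $\mathcal{F}$ tiling of an $(n+1-2k-ji)$-board, so this sub-class contributes $s^{2} r^{j} F_{r,s}^{i}(k,n-2k-ji)$, which, summed over $0 \le j \le \lfloor (n+1-2k)/i\rfloor$, is precisely the first sum in the statement. For class (D), I would refine by $m \ge 1$, the number of $i$-rectangles following the rightmost $k$-rectangle; stripping them off together with that $k$-rectangle yields an arbitrary tiling of length $n+1-mi-k$, contributing $s r^{m} F_{r,s}^{i}(k,n-mi-k)$, and the substitution $j=m-1$ rewrites this as the second sum with $j$ ranging from $0$ to $\lfloor (n+1-k-i)/i\rfloor$.

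The main delicacy lies in the piecewise definition of $v(i,k,n)$, which by construction must equal the combined contribution of classes (A) and (B). Class (A) consists of a single pattern using $\lfloor (n+1)/i\rfloor$ coloured $i$-rectangles preceded by $l$ initial black squares, so by the combinatorial interpretation of Section~\ref{sec2.1} it exists iff $l \le \min\{i,k\}-1$ and then contributes $r^{\lfloor (n+1)/i\rfloor}$. Class (B) has an analogous prefix of length $n+1-k$, whose remainder modulo $i$ equals $l-k$ if $l \ge k$ and $l-k+i$ if $l<k$; it therefore exists iff that remainder is at most $\min\{i,k\}-1$, and it then contributes $s r^{\lfloor (n+1-k)/i\rfloor}$. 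When $k \ge i$ both conditions hold automatically, giving the first branch of $v$. When $i > k$, I would split the range of $l$ into the four subintervals $l \le 2k-i-1$, $2k-i-1 < l \le k-1$, $k \le l \le 2k-1$, and $l > 2k-1$, and in each case determine which of (A) and (B) is nonempty; the resulting four verdicts exactly match the four branches in the statement. Finally, the hypothesis $n \ge 2\max\{i,k\}-1$ is precisely what guarantees $n+1-2k \ge -1$ and $n+1-k-i \ge -1$, so that every argument of $F_{r,s}^{i}$ appearing in the two sums lies in the range where it is defined.
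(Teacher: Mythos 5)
Your proposal is correct and follows essentially the same route as the paper: your classes (A) and (B) together are precisely the tilings counted by $v(i,k,n)$ (those with no $k$-rectangle before the last piece), and conditioning on the number of $i$-rectangles after the rightmost (respectively, second rightmost) $k$-rectangle in classes (D) and (C) produces the two sums with the same index ranges. The only nitpick is that your formula for the residue of $n+1-k$ modulo $i$ (namely $l-k$ or $l-k+i$) is valid only when $i\geq k$; since for $k\geq i$ the existence conditions for (A) and (B) hold trivially, this does not affect the argument.
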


\begin{proof} 
	
	Firstly, we show that $v(i,k,n)$ is the number of type $\mathcal{F}$ tilings without $k$-rectangles before the last piece. When $k\geq i$, it is easy to see that there are $r^{\lfloor \frac{n+1}{i}\rfloor}+sr^{\lfloor \frac{n+1-k}{i,}\rfloor}$ tilings of an $(n+1)$-board that do not have any $k$-rectangle before the last piece, $r^{\lfloor \frac{n+1}{i}\rfloor}$ if the last piece is an $i$-rectangle, and $sr^{\lfloor \frac{n+1-k}{i}\rfloor}$ if the last piece is a $k$-rectangle. If $i>k$ and $l\leq 2k-i-1<k-1$, then we have $n+1=l+qi=l-k+k+i+(q-1)i$, for some integer $q\geq 2$. Thus, there exist tilings without $k$-rectangles before the last piece. On the other hand, when $2k-i-1<l\leq k-1$, such tilings end only with $i$-rectangles. Now, suppose $i>k$ and $k\leq l\leq i-1$. In this case, there are not tilings with $k$-rectangles before the last piece, where the last piece is an $i$-rectangle. We also have $n+1=l+qi=l-k+k+qi$. Hence, if $l\leq 2k-1$, there exist tilings, whose last piece is a $k$-rectangle, without $k$-rectangles before the last piece, and if $l > 2k-1$ we do not have any tiling without $k$-rectangles before the last piece. In the remaining cases there is a $k$-rectangle followed by $j$ $i$-rectangles before the last piece, where either $0\leq j \leq \lfloor\frac{n+1-2k}{i}\rfloor$, if the last piece is a $k$-rectangle or $0\leq j \leq \lfloor\frac{n+1-k-i}{i}\rfloor$ otherwise. The number of tilings in the former is $s^{2}r^{j}F_{r,s}^{i}(k,n-2k-ji)$ while in the last case we have $sr^{j+1}F_{r,s}^{i}(k,n-i-k-ji)$ tilings. By adding up all these numbers we conclude the proof.
\end{proof}


Before presenting our next identity, we need to define the idea of a \textit{breakable} tiling. Similar objects have been defined by other authors, see \cite[Section 1.2]{benjamin2003} or \cite[Section 2.1]{benjamin2008tiling} for instance.

Let $k>i\geq 1$. We say that a type $\mathcal{F}$ tiling of an $(n+1)$-board is \textit{breakable} at cell $m$ if we can decompose it into two tilings, one covering cells $1$ until $m$ and the other covering the cells from $m+1$ until $n+1$. We call  a tiling \textit{unbreakable} at cell $m$, if a $k$-rectangle ($i$-rectangle if $i>1$) occupies cells $m-j, \ldots, m+k-1-j$ ($m+i-1-j$), where $j\in \lbrace 0, \ldots, k-2\rbrace$.   

\begin{example} Figure \ref{fig4} below shows a tiling breakable at cells 1, 3, 5, and 8.
\end{example}
\begin{figure}[h!]
	\centering
	\begin{tikzpicture}[scale=0.8][rounded corners, ultra thick]
	\shade[top color=black,bottom color=black, draw=black] (0,-2) rectangle +(0.5,0.5);	
	\shade[top color=white,bottom color=white, draw=black] (0.5,-2) rectangle +(1.5,0.5);
	\shade[top color=white,bottom color=white, draw=black] (1.5,-2) rectangle +(1.0,0.5);
	\shade[top color=gray,bottom color=gray, draw=black] (2.5,-2) rectangle +(1.5,0.5);
	\end{tikzpicture}
	\caption{A breakable tiling}
	\label{fig4}	
\end{figure}
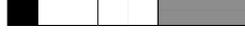


\begin{theorem} Let $m \geq k\geq 2$ and $n\geq 0$ be integers. Then,
	\begin{equation}
	F^{1}_{r,s}(k,m+n)=F^{1}_{r,s}(k,m-1)F^{1}_{r,s}(k,n)+s\sum_{j=0}^{k-2}F^{1}_{r,s}(k,m-j-2)F^{1}_{r,s}(k,n-k+j+1).
	\label{quebr}
	\end{equation}
	\label{quebravel}
\end{theorem}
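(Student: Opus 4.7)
The plan is to prove this via a combinatorial argument using the tiling interpretation from Theorem \ref{Interpret1}, exploiting the notion of breakability just introduced. Since $i=1$, every position is equally eligible for a $1$-rectangle (with one of $r$ colors), and there are no black squares (the restriction $\min\{i,k\}-1=0$ forbids them). Hence an unbreakability at cell $m$ can be caused only by a $k$-rectangle straddling cell $m$. This is the key simplification that makes the right-hand side clean.

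I would count the tilings of an $(m+n+1)$-board enumerated by $F^{1}_{r,s}(k,m+n)$ by conditioning on whether the tiling is breakable at cell $m$. In the breakable case, the tiling decomposes canonically as a tiling of cells $1,\ldots,m$ together with an independent tiling of cells $m+1,\ldots,m+n+1$; the former is counted by $F^{1}_{r,s}(k,m-1)$ and the latter by $F^{1}_{r,s}(k,n)$. Multiplying gives the first summand $F^{1}_{r,s}(k,m-1)F^{1}_{r,s}(k,n)$.

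In the unbreakable case there is a unique $k$-rectangle covering cells $m-j,\ldots,m+k-1-j$ for some $j\in\{0,1,\ldots,k-2\}$. For each such $j$, removing this $k$-rectangle leaves a type $\mathcal{F}$ tiling of cells $1,\ldots,m-j-1$ (an $(m-j-1)$-board, counted by $F^{1}_{r,s}(k,m-j-2)$) together with an independent type $\mathcal{F}$ tiling of cells $m+k-j,\ldots,m+n+1$, which has $n-k+j+2$ cells and is counted by $F^{1}_{r,s}(k,n-k+j+1)$. The straddling $k$-rectangle itself carries one of $s$ colors. Summing the product $s\,F^{1}_{r,s}(k,m-j-2)F^{1}_{r,s}(k,n-k+j+1)$ over $j=0,\ldots,k-2$ yields the second summand, and combining with the breakable case proves \eqref{quebr}.

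The only care needed is with the index ranges: the hypothesis $m\geq k\geq 2$ guarantees $m-j-2\geq -1$ for all $j\leq k-2$, so the left factor is always well-defined; similarly $n\geq 0$ gives $n-k+j+1\geq -1$ at the extreme $j=k-2$. I do not anticipate any real obstacle here, since the breakability dichotomy is exhaustive and mutually exclusive and the argument is essentially parallel to the classical Benjamin--Quinn convolution proof for $F_{m+n}=F_{m-1}F_n+F_mF_{n-1}$ in the two-tile setting; the mild subtlety is simply enumerating the $k-1$ possible positions of a $k$-rectangle straddling cell $m$.
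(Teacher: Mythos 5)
Your proof is correct and is essentially the paper's own argument: condition on breakability at cell $m$, with the breakable case contributing $F^{1}_{r,s}(k,m-1)F^{1}_{r,s}(k,n)$ and the unbreakable case contributing, for each of the $k-1$ positions $j$ of the straddling colored $k$-rectangle, the term $s\,F^{1}_{r,s}(k,m-j-2)F^{1}_{r,s}(k,n-k+j+1)$. One small quibble: the extreme index for the right-hand factor is $j=0$ (giving $n-k+1$), not $j=k-2$, so for $n<k-2$ some terms fall outside the range $n\geq -1$ and must be read as vacuous (no such straddling rectangle fits); the paper's statement and proof gloss over this point in exactly the same way.
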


\begin{proof} We show that the r.h.s. of \eqref{quebr} is also the number of tilings enumerated by $F^{1}_{r,s}(k,m+n)$ by considering whether or not a tiling is breakable at cell $m$.  It is easily seen that if a tiling of an $(m+n+1)$-board is breakable at cell $m$, then it splits into two tilings, one of length $m$ and another one of length $n+1$. Hence we have $F^{1}_{r,s}(k,m-1)F^{1}_{r,s}(k,n)$ breakable tilings at cell $m$.
	
	If a tiling of an $(m+n+1)$-board is unbreakable at cell $m$, then there exists a $k$-rectangle covering the cells $m-j, \ldots, m+k-1-j$, for some $j\in \lbrace 0, \ldots, k-2\rbrace$. In this case, the tiling can be split into a tiling of length $m-j-1$ followed by one $k$-rectangle, which in turn is followed by a tiling of length $n-k+j+2$. As we have $s$ colors to the $k$-rectangles, there are  $s\sum_{j=0}^{k-2}F^{1}_{r,s}(k,m-j-2)F^{1}_{r,s}(k,n-k+j+1)$ tilings that are unbreakable at cell $m$.
	Summing over all possible tilings we finish the proof.
\end{proof}

It follows from this theorem the known identities $F_{m+n}=F_{m+1}F_{n}+F_{m}F_{n-1}$ (\cite{benjamin2008tiling}), $P_{m+n}=P_{m+1}P_{n}+P_{m}P_{n-1}$ (\cite{koshpell}), $u_{m+n}=u_{m+1}u_{n}+u_{m}u_{n-2}+u_{m-1}u_{n-1}$ (\cite{flaut2013generalized}), $\{m+n\}_{s,t}=\{n+1\}_{s,t}\{m\}_{s,t}+t\{n\}_{s,t}\{m-1\}_{s,t}$ (\cite{amdeberhan2014generalized}), and also an analogous identity for the Jacobsthal numbers:

\begin{corollary} For $n, m \geq 1$, we have
	$$J_{m+n}=J_{m+1}J_{n}+2J_{m}J_{n-1}.$$
	\label{13.1}
\end{corollary}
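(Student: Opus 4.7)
The plan is to specialize Theorem~\ref{quebravel} to the parameters that produce the Jacobsthal sequence and then perform a routine re-indexing.

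From Table~\ref{tab1} we have $i=1$, $k=2$, $r=1$, $s=2$, and the recurrence $F^{1}_{1,2}(2,n)=F^{1}_{1,2}(2,n-1)+2F^{1}_{1,2}(2,n-2)$ with $F^{1}_{1,2}(2,-1)=F^{1}_{1,2}(2,0)=1$. Comparing with $J_0=0$, $J_1=1$, $J_2=1$, $J_3=3$, $J_4=5,\ldots$ and the Jacobsthal recurrence $J_n=J_{n-1}+2J_{n-2}$, a quick induction gives the identification
\[
F^{1}_{1,2}(2,n)=J_{n+2}\qquad \text{for all } n\geq -1.
\]

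Now I would substitute $i=1$, $k=2$, $r=1$, $s=2$ into \eqref{quebr}. Since $k-2=0$, the sum on the right collapses to the single term $j=0$, giving
\[
F^{1}_{1,2}(2,m+n)=F^{1}_{1,2}(2,m-1)F^{1}_{1,2}(2,n)+2\,F^{1}_{1,2}(2,m-2)F^{1}_{1,2}(2,n-1).
\]
Translating through the identification above turns this into $J_{m+n+2}=J_{m+1}J_{n+2}+2J_{m}J_{n+1}$. Replacing $n$ by $n-2$ yields exactly the claimed identity
\[
J_{m+n}=J_{m+1}J_{n}+2J_{m}J_{n-1}.
\]

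The only mild obstacle is that Theorem~\ref{quebravel} is stated for $m\geq k=2$ and $n\geq 0$, so after the shift $n\mapsto n-2$ the argument delivers the identity for $m\geq 2$ and $n\geq 2$. I would therefore finish by verifying the two boundary ranges directly from the Jacobsthal recurrence: for $n=1$ the identity reduces to $J_{m+1}=J_{m+1}\cdot 1+2J_{m}\cdot 0$, which is trivial, and for $m=1$ it reads $J_{n+1}=J_{2}J_{n}+2J_{1}J_{n-1}=J_{n}+2J_{n-1}$, which is just the defining recurrence. This completes the verification for all $m,n\geq 1$.
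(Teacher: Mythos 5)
Your proposal is correct and follows the paper's own route: the paper obtains Corollary \ref{13.1} precisely by specializing Theorem \ref{quebravel} to the Jacobsthal parameters $i=1$, $k=2$, $r=1$, $s=2$ (so the sum collapses to the single $j=0$ term) and re-indexing via $F^{1}_{1,2}(2,n)=J_{n+2}$. Your explicit check of the boundary cases $m=1$ and $n=1$ is a welcome extra detail the paper leaves implicit.
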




We call \textit{tail} of a type $\mathcal{L}$ tiling either the last sequence of $k$ pieces, if there is a $k$-rectangle after all or between two $i$-rectangles (tail of size $k(1+i)-i = (k-1)i+k$), or the last sequence of $k-1$ pieces, if the tiling ends with $k-1$ $i$-rectangles (tail of size $(k-1)i$).

\begin{example} In Figure \ref{fig5} we have the possible tails of the tilings of a $9$-board using $2$-rectangles and  $3$-rectangles, of colors white and gray, respectively.	
	\begin{figure}[h!]
		\centering
		\begin{tikzpicture}[scale=0.8][rounded corners, ultra thick]
		\draw (-0.25,-0.75) ;
		\shade[top color=white,bottom color=white, draw=black] (0,-1) rectangle +(1.0,0.5);
		\shade[top color=white,bottom color=white, draw=black] (1.0,-1) rectangle +(1.0,0.5);    
		\draw (-0.25,-0.75) ;
		\shade[top color=white,bottom color=white, draw=black] (0,-2) rectangle +(1.0,0.5);    
		\shade[top color=gray,bottom color=gray, draw=black] (1.0,-2) rectangle +(1.5,0.5);    
		\shade[top color=white,bottom color=white, draw=black] (2.5,-2) rectangle +(1.0,0.5);    
		\draw (-0.25,-0.75) ;
		\shade[top color=white,bottom color=white, draw=black] (0,-3) rectangle +(1.0,0.5);    
		\shade[top color=white,bottom color=white, draw=black] (1.0,-3) rectangle +(1.0,0.5);
		\shade[top color=gray,bottom color=gray, draw=black] (2.0,-3) rectangle +(1.5,0.5);
		\end{tikzpicture}
		\caption{Tails of a type $\mathcal{L}$ tiling using $2$-rectangles and $3$-rectangles}
		\label{fig5}	
	\end{figure}
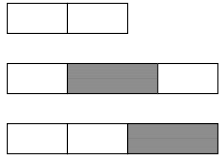
	

\end{example}

\begin{theorem} Let $k > i$ and $n\geq k(1+i)-i-1$ be integers. Then, 
	$$L_{r,s}^{i}(k,n)=ksr^{k-1}F_{r,s}^{i}(k,n-(1+i)k+i)+r^{k}F_{r,s}^{i}(k,n-ki).$$
\end{theorem}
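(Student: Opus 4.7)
The plan is to mirror the algebraic strategy used in the proof of Theorem \ref{recor}: write out the definition of $L^{i}_{r,s}(k,n)$ and apply the basic recurrence (\ref{23.1}) for $F^{i}_{r,s}$ to one of the resulting $F$-terms. Starting from
$$L^{i}_{r,s}(k,n) = r^{k-1}\bigl\{(k-1)s\, F^{i}_{r,s}(k,n-k-(k-1)i) + F^{i}_{r,s}(k,n-(k-1)i)\bigr\},$$
I would apply the recurrence to $F^{i}_{r,s}(k,n-(k-1)i)$ to rewrite it as $r F^{i}_{r,s}(k,n-ki) + s F^{i}_{r,s}(k,n-k-(k-1)i)$. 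Substituting and collecting like terms merges the two $F^{i}_{r,s}(k,n-k-(k-1)i)$ contributions into a single term with coefficient $ks\,r^{k-1}$, while the $F^{i}_{r,s}(k,n-ki)$ term picks up the coefficient $r^{k}$. Finally, the elementary rewrite $n-k-(k-1)i = n-(1+i)k+i$ puts the formula in the stated form.

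The only nontrivial bookkeeping is checking that the recurrence is applicable. For this we need $n-(k-1)i \geq \max\{i,k\}-1 = k-1$ (since $k>i$), i.e.\ $n \geq (k-1)(1+i) = k(1+i) - i - 1$, which is exactly the hypothesis. I do not anticipate any other obstacle; the proof is essentially a one-step expansion followed by combining like terms.

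As an alternative in the spirit of the combinatorial approach of Section \ref{sec-citacao}, one could instead count type-$\mathcal{L}$ tilings of an $(n+1)$-board by splitting according to the composition of the last $k$ pieces. Either all $k$ of them are $i$-rectangles, contributing $r^{k} F^{i}_{r,s}(k,n-ki)$, or exactly $k-1$ are $i$-rectangles and one is a $k$-rectangle placed in any of the $k$ positions within those last $k$ slots, contributing $ks\,r^{k-1}F^{i}_{r,s}(k,n-k-(k-1)i)$. That this partition reproduces the one used to define $L^{i}_{r,s}(k,n)$ — in which the $k$-rectangle is only allowed in $k-1$ positions (between two $i$-rectangles or after all of them) — is precisely the content of the algebraic step above, so the two proofs are really the same identity viewed from two angles.
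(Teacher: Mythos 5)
Your algebraic proof is correct: expanding the definition of $L^{i}_{r,s}(k,n)$ and applying the recurrence \eqref{23.1} once to $F^{i}_{r,s}(k,n-(k-1)i)$ does merge the two terms into the coefficient $ksr^{k-1}$, and your check that the hypothesis $n\geq k(1+i)-i-1=(k-1)(i+1)$ is exactly what makes the recurrence applicable (and is also exactly the range where the defining formula for $L^{i}_{r,s}(k,n)$ holds) is the only bookkeeping needed. The paper, however, argues combinatorially rather than algebraically: it starts from the two tail types in the tiling interpretation of $L^{i}_{r,s}(k,n)$, splits the tilings with tail of size $(k-1)i$ according to whether the piece preceding the tail is an $i$-rectangle or a $k$-rectangle, and absorbs the latter into the tilings with tail of size $k(1+i)-i$, which is precisely your recurrence step read off the board; the combinatorial version ``explains'' the merge $(k-1)+1=k$ as the $k$ admissible positions of a single $k$-rectangle among the last $k$ pieces, while your computation is shorter and makes the domain of validity explicit. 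Your sketched combinatorial alternative (partitioning by the composition of the last $k$ pieces) is essentially the paper's argument in one step, though as you half-acknowledge it still needs the observation that the case with the $k$-rectangle first among the last $k$ pieces is exactly the $(k-1)i$-tail tilings whose prefix ends in a $k$-rectangle (and that no black square can occur among the last $k$ pieces for $n$ in this range); since your algebraic proof is self-contained, this does not affect correctness.
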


\begin{proof} We know that $(k-1)sr^{k-1}F_{r,s}^{i}(k,n-k(i+1)+i)$ equals the number of type $\mathcal{L}$ tilings with tail of size $k(i+1)-i$, while $r^{k-1}F_{r,s}^{i}(k,n-(k-1)i)$ is the number of type $\mathcal{L}$ tilings with tail of size $(k-1)i$. We decompose the last set of tilings into two disjoint subsets according to the last piece being either an $i$-rectangle, whose total is $r^{k}F_{r,s}^{i}(k,n-(k-1)i-i)=r^{k}F_{r,s}^{i}(k,n-ki)$, or a $k$-rectangle, whose total equals $sr^{k-1}F^{i}_{r,s}(k,n-(k-1)i-k)=sr^{k-1}F^{i}_{r,s}(k,n-(i+1)k+i)$. Then, adding up these numbers we are left with the number of tilings enumerated by $L_{r,s}^{i}(k,n)$.
\end{proof} 

Taking $i=r=s=1$ and $k=2$ in the above identity it follows the known result: $L_n=F_{n}+2F_{n-1}.$

\begin{theorem} Let $k > i$ and $n\geq k(2+i)-i-1$ be integers. Then,
	$$\begin{array}{rl} L_{r,s}^{i}(k,n)= & (k-1)sr^{\lfloor \frac{n+1-k}{i} \rfloor} +r^{\lfloor \frac{n+1}{i} \rfloor} \\ & +\sum\limits_{j=0}^{\lfloor \frac{n+1-k(2+i)+i}{i}\rfloor}(k-1)s^2r^{(k-1)+j}F_{r,s}^{i}(k,n-k(2+i)+i-ij) \\ & +\sum\limits_{t=0}^{\lfloor \frac{n+1-k(1+i)+i}{i}\rfloor}sr^{(k-1)+t}F_{r,s}^{i}(k,n-k(1+i)+i-it). \end{array}$$
\end{theorem}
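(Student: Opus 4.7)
The plan is to use the tiling interpretation of $L_{r,s}^{i}(k,n)$ from Section~\ref{sec-citacao}. A type $\mathcal{L}$ tiling of an $(n+1)$-board falls into exactly one of two mutually exclusive categories: tilings whose tail has size $(k-1)i+k$ (the last $k$ pieces are made up of $k-1$ $i$-rectangles together with one $k$-rectangle in one of $k-1$ admissible positions) and tilings whose tail has size $(k-1)i$ (the last $k-1$ pieces are all $i$-rectangles). I would refine this dichotomy further according to whether at least one $k$-rectangle appears before the tail, producing four disjoint subclasses whose enumerations I claim match the four summands of the identity.

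For the two subclasses in which no $k$-rectangle occurs before the tail, the pre-tail portion must consist only of $i$-rectangles, together with the possibility of black squares restricted to positions $1,\ldots,i-1$; the standard argument shows that such tilings of a board of length $L$ are in number $r^{\lfloor L/i \rfloor}$. Multiplying by the appropriate tail weight ($r^{k-1}$ for the pure $i$-rectangle tail, and $(k-1)s r^{k-1}$ for the mixed tail) and combining exponents via the elementary identity $\lfloor L/i \rfloor+(k-1) = \lfloor (L + (k-1)i)/i \rfloor$ yields the first two summands $r^{\lfloor (n+1)/i \rfloor}$ and $(k-1)sr^{\lfloor (n+1-k)/i \rfloor}$ displayed in the theorem.

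For the two subclasses in which a $k$-rectangle does appear before the tail, I locate the rightmost such $k$-rectangle and let $j$ (respectively $t$) denote the number of $i$-rectangles lying between it and the tail. Removing the block formed by this $k$-rectangle, the intervening $i$-rectangles, and the tail leaves an arbitrary type $\mathcal{F}$ tiling of a shorter board, enumerated by the appropriate value of $F_{r,s}^{i}(k,\cdot)$; direct length computation yields index $n-k(2+i)+i-ij$ in the mixed tail case and $n-k(1+i)+i-it$ in the pure $i$-rectangle tail case. Attaching the cumulative weights $(k-1)s^{2}r^{(k-1)+j}$ or $sr^{(k-1)+t}$ and summing $j$ (resp.\ $t$) over its natural range $0$ to $\lfloor(n+1-k(2+i)+i)/i\rfloor$ (resp.\ $\lfloor(n+1-k(1+i)+i)/i\rfloor$), determined by requiring the remaining sub-tiling to have nonnegative length, produces exactly the two double-indexed sums in the statement. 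The hypothesis $n \geq k(2+i)-i-1$ is precisely what guarantees that each sum has a nonempty index set.

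The main obstacle I expect is bookkeeping: keeping track of the exponents of $r$ and of the precise summation ranges in each of the four subclasses, and confirming that the classification is indeed disjoint and exhaustive (in particular, that the definition of ``tail'' correctly forbids placing a $k$-rectangle at the leftmost of the last $k$ pieces in the mixed-tail case, which is what keeps the two categories from overlapping). Once those identifications are in place, adding the four contributions reproduces the right-hand side of the theorem.
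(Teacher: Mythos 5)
Your proposal is correct and follows essentially the same route as the paper's proof: split the type $\mathcal{L}$ tilings by tail size and by whether a $k$-rectangle occurs before the tail, count the no-$k$-rectangle cases directly as powers of $r$ (with the $(k-1)s$ factor for the mixed tail), and otherwise condition on the number of $i$-rectangles between the rightmost pre-tail $k$-rectangle and the tail, remove that block to leave a type $\mathcal{F}$ tiling, and sum. The weights, indices, and summation ranges you give all match the paper's argument.
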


\begin{proof} It is easily seen that $(k-1)sr^{\lfloor \frac{n+1-k}{i} \rfloor} +r^{\lfloor \frac{n+1}{i} \rfloor}$ is the number of type $\mathcal{L}$ tilings of an $(n+1)$-board not having any $k$-rectangle before the tail: $(k-1)sr^{\lfloor \frac{n+1-k}{i} \rfloor}$ having tail of size $k(1+i)-i$ and $r^{\lfloor \frac{n+1}{i} \rfloor}$ of size $(k-1)i$. In the remaining cases there exists at least one $k$-rectangle before the tail. We count the number of such tilings by considering the number, $j$, of $i$-rectangles between the tail and the rightmost $k$-rectangle before the tail. We have $j\leq \lfloor \frac{n+1-(2k+(k-1)i)}{i}\rfloor=\lfloor \frac{n+1-k(2+i)+i}{i}\rfloor$, if the tail has size $k(1+i)-i$ and $t\leq \lfloor \frac{n+1-(k+(k-1)i)}{i}\rfloor=\lfloor \frac{n+1-k(1+i)+i}{i}\rfloor$, if the tail is of size $(k-1)i$. Removing this sequence of pieces we are left with either type $\mathcal{F}$ tilings of an $(n+1-k(2+i)+i-ij)$-board, if the tail has size $k(1+i)-i$ or type $\mathcal{F}$ tilings of an $(n+1-k(1+i)+i-it)$-board, if the tail is of size $(k-1)i$. Summing all possible values in each case we finish the proof, observing that the factor $(k-1)$ in the first sum stems from the number of possibilities to insert a $k$-rectangle in a tail of size $k(1+i)-i$.
\end{proof}

\begin{corollary} For $n\geq 4$, we have
	$$L_n= 2 + F_n+2\sum \limits _{j=1}^{n-3}F_{j}$$
	and consequently $L_n - F_n$ is even.
	\label{13.1.nova}
\end{corollary}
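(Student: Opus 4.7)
The plan is to obtain this corollary as a direct specialization of the preceding theorem, taking $i=r=s=1$ and $k=2$, where $L^{1}_{1,1}(2,n)=L_{n}$ and $F^{1}_{1,1}(2,n)=F_{n+2}$. Under this choice, $\lfloor\tfrac{n+1-k}{i}\rfloor = n-1$ and $\lfloor\tfrac{n+1}{i}\rfloor = n+1$, so the two initial terms $(k-1)sr^{\lfloor(n+1-k)/i\rfloor}$ and $r^{\lfloor(n+1)/i\rfloor}$ both collapse to $1$, producing the constant $2$. Thus the bulk of the work is the clean evaluation of the two sums in the theorem.

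Next, I would compute the upper limits and summands for the two sums. For the first sum, $\lfloor\tfrac{n+1-k(2+i)+i}{i}\rfloor = n-4$ and each summand equals $F^{1}_{1,1}(2,n-5-j) = F_{n-3-j}$, so the sum reindexes to $\sum_{m=1}^{n-3} F_{m}$. For the second sum, $\lfloor\tfrac{n+1-k(1+i)+i}{i}\rfloor = n-2$ and each summand equals $F^{1}_{1,1}(2,n-3-t) = F_{n-1-t}$, yielding $\sum_{m=1}^{n-1} F_{m}$. Combining everything gives
$$L_{n}=2+\sum_{m=1}^{n-3}F_{m}+\sum_{m=1}^{n-1}F_{m}.$$

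I would then isolate the higher-index tail of the second sum and use the Fibonacci recurrence in the form $F_{n-2}+F_{n-1}=F_{n}$ to write $\sum_{m=1}^{n-1}F_{m} = \sum_{m=1}^{n-3}F_{m}+F_{n}$. Substituting back yields $L_{n}=2+F_{n}+2\sum_{j=1}^{n-3}F_{j}$, as claimed. The evenness of $L_{n}-F_{n}$ is then immediate from the identity, since $L_{n}-F_{n}=2\bigl(1+\sum_{j=1}^{n-3}F_{j}\bigr)$.

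There is no real conceptual obstacle here; the main care needed is bookkeeping, namely tracking the index shift $F^{1}_{1,1}(2,m)=F_{m+2}$ across both sums and verifying that the upper limits match the hypothesis $n\geq 4$ (which corresponds to the condition $n\geq k(2+i)-i-1 = 4$ in the parent theorem). A quick sanity check at $n=4,5,6$ (where $L_{4}=7$, $L_{5}=11$, $L_{6}=18$) confirms the formula and will be worth including as a one-line verification.
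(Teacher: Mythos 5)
Your proof is correct and follows the route the paper intends: the corollary is obtained by specializing the preceding theorem at $i=r=s=1$, $k=2$, using $L^{1}_{1,1}(2,n)=L_{n}$ and $F^{1}_{1,1}(2,n)=F_{n+2}$, and then simplifying the two sums with the Fibonacci recurrence. Your index bookkeeping (upper limits $n-4$ and $n-2$, summands $F_{n-3-j}$ and $F_{n-1-t}$) and the final reduction $\sum_{m=1}^{n-1}F_{m}=\sum_{m=1}^{n-3}F_{m}+F_{n}$ are all accurate, so nothing further is needed.
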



\begin{theorem} Let $k > i $ and $n\geq k(3+i)-i-1$ be integers. Then,
	{\small
		$$\begin{array}{rcl}
		L^{i}_{r,s}(k,n)\! & = & (k\!-\!1)sr^{\lfloor \frac{n\!+\!1\!-\!k}{i} \rfloor} +r^{\lfloor \frac{n\!+\!1}{i} \rfloor}\!+\!(k\!-\!1)s^{2}r^{\lfloor \frac{n\!+\!1\!-2k}{i}\rfloor}\left( \lfloor \frac{n+1-k(2+i)+i}{i}\rfloor \!+\! 1\right) \\ & & + sr^{\lfloor \frac{n\!+\!1\!-\!k}{i}\rfloor}\left( \lfloor \frac{n\!+\!1\!-\!k(1\!+\!i)\!+\!i}{i}\rfloor \!+\! 1\right) \\ & & 
		+\sum\limits_{j=0}^{\lfloor \frac{n\!+\!1\!-\!k(3\!+\!i)\!+\!i}{i}\rfloor}\sum\limits_{t=0}^{\lfloor \frac{n\!+\!1\!-k(3\!+\!i)\!+\!i\!-\!ji}{i}\rfloor}(k\!-\!1)s^3r^{(k\!-\!1)\!+\!j\!+\!t\!}F_{r,s}^{i}(k,n\!-\!k(3\!+\!i)\!+\!i\!-\!ji\!-\!ti) \\ & & 
		+\sum\limits_{j=0}^{\lfloor\frac{n\!+\!1\!-\!k(i\!+\!2)\!+\!i}{i}\rfloor}\sum\limits_{t=0}^{\lfloor\frac{n\!+\!1\!-\!k(i\!+\!2)\!+\!i\!-ji}{i}\rfloor}s^2r^{(k\!-\!1)\!+\!j\!+\!t}F_{r,s}^{i}(k,n\!-\!k(i\!+\!2)\!+\!i\!-ji\!-\!ti).
		\end{array}$$}
\end{theorem}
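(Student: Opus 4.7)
The plan is to use the tiling interpretation of $L^{i}_{r,s}(k,n)$ developed in Section \ref{sec2.1}: every type $\mathcal{L}$ tiling of an $(n+1)$-board splits naturally as (\emph{stuff before the tail})$\,+\,$(\emph{tail}), where the tail has size either $k(1+i)-i$ (containing one $k$-rectangle among $k-1$ $i$-rectangles, in one of $k-1$ positions) or $(k-1)i$ (all $i$-rectangles). The idea is to classify type $\mathcal{L}$ tilings by the number of $k$-rectangles appearing \emph{before} the tail: zero, exactly one, or at least two. This parallels the proof of identity \eqref{31.3} for $F^{i}_{r,s}(k,n)$, but carried out in the $\mathcal{L}$-setting with the added complication of the two possible tail shapes.

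First I would dispose of the "no $k$-rectangle before the tail" case. For such tilings, the pre-tail portion is entirely tiled by $i$-rectangles, so its count coincides with the leading two terms of the previous theorem: $(k-1)sr^{\lfloor (n+1-k)/i\rfloor}$ for tail of size $k(1+i)-i$, and $r^{\lfloor (n+1)/i\rfloor}$ for tail of size $(k-1)i$. Next, for the "exactly one $k$-rectangle before the tail" case, I would count by inserting a single $k$-rectangle into a run of $i$-rectangles preceding the tail. If the tail has size $k(1+i)-i$, the pre-tail consists of $\lfloor (n+1-2k)/i\rfloor$ $i$-rectangles plus one $k$-rectangle, placeable in $\lfloor (n+1-k(2+i)+i)/i\rfloor+1$ slots, contributing $(k-1)s^{2}r^{\lfloor (n+1-2k)/i\rfloor}\bigl(\lfloor (n+1-k(2+i)+i)/i\rfloor+1\bigr)$. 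The analogous count for tail of size $(k-1)i$ gives $sr^{\lfloor (n+1-k)/i\rfloor}\bigl(\lfloor (n+1-k(1+i)+i)/i\rfloor+1\bigr)$, matching the next two summands in the claimed identity.

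Finally, for the "at least two $k$-rectangles before the tail" case, I would follow the strategy used to prove \eqref{31.3}: locate the rightmost two $k$-rectangles before the tail and let $j$ (resp.\ $t$) count the $i$-rectangles placed \emph{after} the rightmost $k$-rectangle (resp.\ \emph{between} the two rightmost $k$-rectangles). Removing those two $k$-rectangles together with the $j+t$ $i$-rectangles and the entire tail leaves an unrestricted type $\mathcal{F}$ tiling of a shorter board. In the tail-of-size-$k(1+i)-i$ subcase this yields $F^{i}_{r,s}(k,n-k(3+i)+i-ji-ti)$ tilings, weighted by $(k-1)s^{3}r^{(k-1)+j+t}$ (the $k-1$ counts the placements of the $k$-rectangle inside the tail, the three $s$'s are the three $k$-rectangles involved, the $r^{k-1}$ comes from the $k-1$ $i$-rectangles of the tail, and $r^{j+t}$ from the removed $i$-rectangles). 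The tail-of-size-$(k-1)i$ subcase contributes $s^{2}r^{(k-1)+j+t}F^{i}_{r,s}(k,n-k(i+2)+i-ji-ti)$. Summing $j$ and $t$ over their admissible ranges gives the two double sums in the statement, and the assumption $n\geq k(3+i)-i-1$ is precisely what is needed so that both double sums have non-empty index range.

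The main obstacle is not the combinatorial idea but the careful accounting: one must check that the various ranges of $j$ and $t$ (governed by floor-function bounds) and the four subcases (zero / one / $\geq 2$ $k$-rectangles before the tail, crossed with the two tail shapes) exactly reproduce \emph{every} type $\mathcal{L}$ tiling enumerated by $L^{i}_{r,s}(k,n)$ without double counting, and that the colour weights ($r$ for $i$-rectangles, $s$ for $k$-rectangles, plus the factor $k-1$ for tails of size $k(1+i)-i$) are assembled consistently across all terms. Once this bookkeeping is settled, the identity follows directly from the tiling interpretation.
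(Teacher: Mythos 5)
Your proposal is correct and follows essentially the same route as the paper: count type $\mathcal{L}$ tilings according to whether there are zero, one, or at least two $k$-rectangles before the tail, split each case by the two tail shapes, and in the last case parametrize by the numbers $j,t$ of $i$-rectangles between/after the two rightmost pre-tail $k$-rectangles, reducing to type $\mathcal{F}$ counts of shorter boards. The only difference is the cosmetic swap of the roles of $j$ and $t$, which does not affect the sums.
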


\begin{proof} We count the number of type $\mathcal{L}$ tilings of an $(n+1)$-board considering the number of $k$-rectangles before the tail. There are $(k-1)sr^{\lfloor \frac{n+1-k}{i} \rfloor} +r^{\lfloor \frac{n+1}{i} \rfloor}$ tilings not having any $i$-rectangle before the tail. Now, we consider the tilings with exactly one $k$-rectangle before the tail. If the tail has size $k(1+i)-i$, we have  $(k-1)s^{2}r^{\lfloor \frac{n+1-2k}{i}\rfloor}\left( \lfloor \frac{n+1-k(2+i)+i}{i}\rfloor +1\right)$ tilings. On the other hand, if the size of the tail is $(k-1)i$, then  there are $sr^{\lfloor \frac{n+1-k}{i}\rfloor}\left( \lfloor \frac{n+1-k(1+i)+i}{i}\rfloor +1\right)$ tilings. In both cases there exist at least two $k$-rectangles before the tail. Denoting by $j$ the number of $i$-rectangles between the last two $k$-rectangles before the tail and by $t$ the number of $i$-rectangles between the rightmost $k$-rectangle and the tail, we have two cases to consider: \\
	\noindent \textit{(a) tail of size $k(1+i)-i$}. In this case, $j, t\leq \lfloor \frac{n+1-k(1+i)+i-2k}{i}\rfloor=\lfloor \frac{n+1-k(3+i)+i}{i}\rfloor$ and for each $j$ and $t$, there are $(k-1)s^{3}r^{(k-1)+j+t} F_{r,s}^{i}(k,n-k(3+i)+i-ji-ti)$ tilings. Hence,  $$\sum\limits_{j=0}^{\lfloor \frac{n+1-k(3+i)+i}{i}\rfloor}\sum\limits_{t=0}^{\lfloor \frac{n+1-k(3+i)+i-ji}{i}\rfloor}(k-1)s^3r^{(k-1)+j+t}F_{r,s}^{i}(k,n-k(3+i)+i-ji-ti)$$
	enumerates the total of type $\mathcal{L}$ tilings of an $(n+1)$-board with tail of size $k(1+i)-i$ and a sequence of $j$ $i$-rectangles between the last two $k$-rectangles followed by $t$ $i$-rectangles before the tail. \\
	\noindent \textit{(b) tail of size $(k-1)i$}. In analogy with the previous case, we have $j,t\leq \lfloor \frac{n+1-k(i+2)+i}{i}\rfloor$ and, then, \\
	$$\sum\limits_{j=0}^{\lfloor\frac{n+1-k(i+2)+i}{i}\rfloor}\sum\limits_{t=0}^{\lfloor\frac{n+1-k(i+2)+i-ji}{i}\rfloor}s^2r^{(k-1)+j+t}F_{r,s}^{i}(k,n-k(i+2)+i-ji-ti)$$ 
	is the number of tilings.
\end{proof}

\begin{corollary} If $n\geq 6$, we have
	$$L_n=2(n-1)+F_{n-3}+\sum_{j=0}^{n-5} 2(j+1)F_{n-4-j}$$
	and, consequently, $L_n - F_{n-3}$ is even.
	\label{Cor19.2}
\end{corollary}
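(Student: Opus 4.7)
The plan is to derive the identity as a direct specialization of the preceding theorem with $i=r=s=1$ and $k=2$, together with the identifications $L^{1}_{1,1}(2,n)=L_n$ and $F^{1}_{1,1}(2,m)=F_{m+2}$ recorded in Section~\ref{Sectn1}. The range condition $n\geq k(3+i)-i-1$ becomes $n\geq 6$, matching the claim.

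First I would simplify the four lead terms of the theorem. Under the chosen parameters, all factors $r^{\lfloor\cdot\rfloor}$ and $s^{\cdot}$ collapse to $1$, $(k-1)=1$, and the floor expressions $\lfloor(n+1-k(2+i)+i)/i\rfloor+1$ and $\lfloor(n+1-k(1+i)+i)/i\rfloor+1$ evaluate to $n-3$ and $n-1$, respectively. Adding these gives $1+1+(n-3)+(n-1)=2(n-1)$, which accounts for the constant part of the claim.

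Next I would collapse each of the two double sums into a single sum. In both sums, the summand depends only on $j+t$, so grouping pairs by $m=j+t$ contributes a factor of $m+1$ (the number of ordered pairs). After writing $F^{1}_{1,1}(2,\cdot)=F_{\cdot+2}$, the sum over tail size $k(1+i)-i=3$ yields $\sum_{m=0}^{n-6}(m+1)F_{n-5-m}$ and the sum over tail size $(k-1)i=1$ yields $\sum_{m=0}^{n-4}(m+1)F_{n-3-m}$. Reindexing both by $l=n-5-m$ and $l=n-3-m$ respectively, then aligning the ranges, I would peel off the boundary terms $F_{n-3}+2F_{n-4}$ coming from $l\in\{n-4,n-3\}$ of the second sum, and combine the common portion $\sum_{l=1}^{n-5}[(n-4-l)+(n-2-l)]F_l = 2\sum_{l=1}^{n-5}(n-3-l)F_l$.

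Finally, I would reindex back via $j=n-4-l$ so that $2F_{n-4}+2\sum_{l=1}^{n-5}(n-3-l)F_l = \sum_{j=0}^{n-5}2(j+1)F_{n-4-j}$, which together with the contributions $2(n-1)$ and $F_{n-3}$ gives exactly the stated expression for $L_n$. The parity statement is then immediate: every summand apart from $F_{n-3}$ is divisible by $2$, so $L_n - F_{n-3}$ is even. The main obstacle is purely bookkeeping — tracking the floor values and matching the indices of the two double sums after reindexing — rather than any conceptual step, since the combinatorial heavy lifting has already been done in the preceding theorem.
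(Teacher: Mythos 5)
Your specialization is correct: with $i=r=s=1$, $k=2$, the range becomes $n\geq 6$, the four lead terms sum to $2(n-1)$, the two double sums collapse (grouping by $m=j+t$) to $\sum_{m=0}^{n-6}(m+1)F_{n-5-m}+\sum_{m=0}^{n-4}(m+1)F_{n-3-m}$, and your reindexing correctly recombines these into $F_{n-3}+\sum_{j=0}^{n-5}2(j+1)F_{n-4-j}$, with the parity claim immediate. This is exactly the route the paper intends (it states the corollary without proof as the $i=r=s=1$, $k=2$ case of the preceding theorem), so your argument matches the paper's approach while supplying the bookkeeping it omits.
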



Combining Corollaries \ref{13.1.nova} and \ref{Cor19.2}, if follows that $F_n - F_{n-3}$ is even for $n \geq 6$.

\begin{theorem} Let $k >i\geq 2$ and $n\geq 2k+(k-1)i-1$ be integers such that $n+1-(k-1)i \equiv l (mod\  k)$, where $0 \leq l \leq i-1$. Then,
	$$\begin{array}{rl} L^{i}_{r,s}(k,n) = & s^{\lfloor \frac{n+1-(k-1)i}{k}\rfloor}+(k-1)s^{\lfloor \frac{n+1+i-k(i+1)}{k}\rfloor} \\ & + r^ks^{j}\sum\limits_{j=0}^{\lfloor \frac{n+1-(k-1)i}{k} \rfloor}F^{i}_{r,s}(k,n-ki-jk) \\ & + (k-1)r^ks^{j+1}\sum\limits_{j=0}^{\lfloor \frac{n+1-(k-1)i-k}{k} \rfloor}F^{i}_{r,s}(k,n-ik-jk-k). \end{array}$$ 
\end{theorem}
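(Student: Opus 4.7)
My plan is to count type $\mathcal{L}$ tilings of the $(n+1)$-board by classifying them according to what appears immediately to the left of the tail, using the tiling interpretation from Section \ref{sec2.1}. Recall that the tail of a type $\mathcal{L}$ tiling is either of type 1 (the final $k-1$ pieces are $i$-rectangles, of total size $(k-1)i$) or of type 2 (the final $k$ pieces comprise $k-1$ $i$-rectangles together with one $k$-rectangle placed in one of $k-1$ admissible positions, of total size $k+(k-1)i$).

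The first step handles the tilings in which no $i$-rectangle appears strictly before the tail. For a type 1 tail the pre-tail region has length $n+1-(k-1)i$; writing this as $Ak+l$ with $A = \lfloor \frac{n+1-(k-1)i}{k}\rfloor$, such a tiling must consist of $A$ colored $k$-rectangles (contributing $s^A$) preceded by $l$ black squares at the very start. This is legitimate precisely because the hypothesis $l \le i-1$ permits black squares only in the first $i-1$ positions, and it recovers the first stand-alone term. The type 2 variant is analogous, with one fewer $k$-rectangle in the pre-tail region and an extra factor $(k-1)$ accounting for the position of the $k$-rectangle inside the tail, yielding the second stand-alone term.

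Next I dispose of the remaining tilings, those that do contain an $i$-rectangle before the tail. For each such tiling I single out the rightmost $i$-rectangle strictly preceding the tail, and let $j$ denote the number of $k$-rectangles lying between that $i$-rectangle and the tail. Everything to the left of this distinguished $i$-rectangle is an arbitrary type $\mathcal{F}$ tiling of the remaining board. A direct length computation shows that this prefix board has length $n+1-ki-jk$ in the type 1 case and $n+1-ki-jk-k$ in the type 2 case, so by Theorem \ref{Interpret1} it is tiled in $F^{i}_{r,s}(k,n-ki-jk)$ or $F^{i}_{r,s}(k,n-ik-jk-k)$ ways respectively. Multiplying by the weight $r$ of the distinguished $i$-rectangle, the weight $s^j$ of the wedged $k$-rectangles, and the respective tail weights, then summing over all admissible $j \ge 0$ (with the convention $F^{i}_{r,s}(k,m)=0$ for $m<-1$, which automatically truncates the sums at the correct upper limits) gives the two sums in the statement.

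The main delicate point is verifying that the four families produced above are pairwise disjoint and together exhaust the type $\mathcal{L}$ tilings. This is where the residue hypothesis $0 \le l \le i-1$ and the lower bound $n \ge 2k+(k-1)i-1$ play their roles: the residue condition is precisely what allows the no-$i$-rectangle-before-tail subfamilies to be realized (otherwise the leftover $l$ cells at the start cannot be covered), while the lower bound guarantees that the prefix board in the $j=0$ summand of each sum has non-negative length, so even the smallest configuration in each summand family is admissible. Once this structural check is in place, adding the four contributions recovers the claimed identity.
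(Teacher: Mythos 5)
Your proposal is correct and follows essentially the same route as the paper's own proof: split the type $\mathcal{L}$ tilings by tail type, count those with no $i$-rectangle before the tail directly (the two stand-alone terms), and handle the rest by locating the rightmost $i$-rectangle before the tail together with the $j$ intervening $k$-rectangles, reducing to type $\mathcal{F}$ counts $F^{i}_{r,s}(k,n-ki-jk)$ and $F^{i}_{r,s}(k,n-ik-jk-k)$. The only difference is that you spell out the role of the residue hypothesis $0\leq l\leq i-1$ and of the lower bound on $n$, which the paper leaves implicit.
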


\begin{proof} There are $s^{\lfloor \frac{n+1-(k-1)i}{k}\rfloor}$ and $(k-1)s^{\lfloor \frac{n+1+i-k(i+1)}{k}\rfloor}$ type $\mathcal{L}$ tilings of an $(n+1)$-board without $i$-rectangles before the tail for tail of size $(k-1)i$ and $(k-1)i+k$, respectively. The other tilings end with an  $i$-rectangle followed by a number $j$ of $k$-rectangles before the tail. Thus, by removing this sequence of pieces and the tail, we are left with type $\mathcal{F}$ tilings of either an $(n+1-(j+i)k)$-board or an $(n+1-(j+i+1)k)$-board. In the latter case, we have $r^ks^{j}F^{i}_{r,s}(k,n-ki-jk)$ tilings for each $j$, while in the former case we have $(k-1)r^ks^{j+1}F^{i}_{r,s}(k,n-ki-jk-k)$. Adding up these numbers we conclude the proof.
\end{proof}

\begin{theorem} Let $k>i= 1$ and $n\geq 3k-2$ be integers. then, 
	$$\begin{array}{rl} L^{i}_{r,s}(k,n) = & u(n) + r^k s^{j}\sum_{j=0}^{\lfloor \frac{n+1-k}{k} \rfloor}F^{i}_{r,s}(k,n-jk-k) \\ & + r^k s^{j+1}(k-1)\sum_{j=0}^{\lfloor \frac{n+1-2k}{k} \rfloor}F^{i}_{r,s}(k,n-jk-2k), \end{array}$$
	where
	\begin{equation*}
	u(n)=
	\left \{
	\begin{array}{ll}
	0, & \textnormal{if} \ n+2-k \not\equiv 0 \ (mod \ k) \\
	r^{k-1}s^{\lfloor \frac{n+2-k}{k}\rfloor}+(k-1)r^{k-1}s^{\lfloor \frac{n+2-2k}{k}\rfloor}, & \textnormal{if} \   n+2-k \equiv 0 \ (mod \ k). \\
	\end{array}
	\right.
	\end{equation*}
\end{theorem}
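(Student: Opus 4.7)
The plan is to adapt the bijective strategy of the preceding theorem to the degenerate value $i=1$. As before, I count type $\mathcal{L}$ tilings of an $(n+1)$-board by performing a case analysis on the structure of the prefix, i.e., the cells strictly to the left of the tail. Since $i=1$, a tail is either the last $k-1$ cells, all covered by $1$-rectangles, or the last $2k-1$ cells, filled with $k-1$ $1$-rectangles together with one $k$-rectangle sitting in one of the $k-1$ non-initial slot positions inside the tail.

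First I would treat the edge case in which the prefix contains no $1$-rectangle at all; then the prefix is composed entirely of $k$-rectangles, which is possible only when the prefix length is a multiple of $k$. Working out the two possible tail sizes of length $k-1$ and $2k-1$, the prefix lengths are $n+2-k$ and $n+2-2k$, both yielding the same divisibility condition $n+2-k\equiv 0\pmod{k}$. Under this condition the two tail types together contribute $u(n)$, and otherwise $u(n)=0$.

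For all remaining tilings I would locate the rightmost $1$-rectangle appearing in the prefix, which is uniquely determined. Immediately to its right lies a (possibly empty) run of $j$ consecutive $k$-rectangles followed by the tail, with $j$ ranging over $0\leq j\leq\lfloor(n+1-k)/k\rfloor$ in the small-tail case and $0\leq j\leq\lfloor(n+1-2k)/k\rfloor$ in the large-tail case. Deleting this distinguished $1$-rectangle, the trailing $j$ $k$-rectangles, and the tail leaves an unrestricted type $\mathcal{F}$ tiling of the remaining length, enumerated by $F^{1}_{r,s}(k,n-jk-k)$ or $F^{1}_{r,s}(k,n-jk-2k)$ respectively. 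Multiplying by the color contributions (one factor of $r$ for the distinguished $1$-rectangle, $r^{k-1}$ for the $1$-rectangles inside the tail, $s^{j}$ or $s^{j+1}$ for the $k$-rectangles, and the extra factor $k-1$ from the placements of the $k$-rectangle within the larger tail) and then summing over $j$ reproduces the two sums in the statement. The hypothesis $n\geq 3k-2$ guarantees that the indices appearing in $F^{1}_{r,s}(k,\cdot)$ are always at least $-1$, so Theorem \ref{Interpret1} applies throughout.

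The main subtlety I anticipate is verifying that these cases partition the set of $L$-tilings without omission or overlap: each tiling either has a prefix made entirely of $k$-rectangles (one of the $u(n)$ configurations) or has a unique rightmost prefix $1$-rectangle (contributing to exactly one summand of one of the two sums). Once this bijective accounting is set up, the identity follows from a direct color and length count, with no new ideas beyond those already used in the preceding theorem.
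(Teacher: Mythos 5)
Your proposal is correct and coincides with the paper's intended argument: the paper omits this proof as ``analogous to the previous one,'' and your decomposition --- first the tilings whose prefix contains no $1$-rectangle (possible exactly when $n+2-k\equiv 0 \pmod k$, giving $u(n)$), then, for the rest, stripping the rightmost prefix $1$-rectangle, the run of $j$ $k$-rectangles after it, and the tail of size $k-1$ or $2k-1$ --- is precisely that analogous count, with the same color factors $r^{k}s^{j}$ and $(k-1)r^{k}s^{j+1}$.
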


The proof of this theorem will be omitted since it is analogous to the previous one. We shaw observe that when $n+2-k \not\equiv 0 (mod \ k)$ there do not exist tilings without $i$-rectangles before the tail. As a consequence of this theorem we have:

\begin{corollary} For $n\geq4$, we have
	\begin{equation*}
	L_n=
	\left \{
	\begin{array}{ll}
	2+F_n+2\displaystyle\sum_{j=0}^{\frac{n}{2} -2}F_{n-2-2j}, & \textnormal{if $n$ is even}, \\
	F_n+2\displaystyle\sum_{j=0}^{\frac{n-3}{2}}F_{n-2-2j}, & \textnormal{if $n$ is odd}.  
	\end{array}
	\right.
	\end{equation*}
\end{corollary}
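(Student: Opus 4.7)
The plan is to derive this corollary as a direct specialization of the preceding theorem to the parameters $i=r=s=1$ and $k=2$, since for those values we have $L^{1}_{1,1}(2,n)=L_n$ and $F^{1}_{1,1}(2,m)=F_{m+2}$, and the hypothesis $n\geq 3k-2$ becomes precisely $n\geq 4$, matching the statement.

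First I would compute the function $u(n)$ in this specialization. The congruence $n+2-k\equiv 0\pmod{k}$ becomes $n\equiv 0\pmod 2$, so $u(n)=0$ when $n$ is odd and, when $n$ is even, $u(n)=r^{k-1}+(k-1)=1+1=2$ (both exponents collapse to $1$ because $r=s=1$). Next I would rewrite the two sums in the theorem. With $r=s=1$ they simplify to $\sum_{j=0}^{\lfloor(n-1)/2\rfloor}F_{n-2j}$ and $\sum_{j=0}^{\lfloor(n-3)/2\rfloor}F_{n-2-2j}$, using the index shift $F^{1}_{1,1}(2,m)=F_{m+2}$.

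The remaining work splits by parity of $n$. If $n=2m$, the first sum runs $j=0,\dots,m-1$, producing $F_{n}+F_{n-2}+\cdots+F_{2}$, while the second runs $j=0,\dots,m-2$, producing $F_{n-2}+F_{n-4}+\cdots+F_{2}$. Pulling the $F_n$ term out of the first sum and combining the remaining terms yields
\[
L_n=2+F_n+2\bigl(F_{n-2}+F_{n-4}+\cdots+F_2\bigr)=2+F_n+2\sum_{j=0}^{n/2-2}F_{n-2-2j},
\]
which is the even case. If $n=2m+1$, the first sum runs $j=0,\dots,m$ and the second runs $j=0,\dots,m-1$; after pulling out $F_n$ the two partial sums coincide and combine to the stated formula $F_n+2\sum_{j=0}^{(n-3)/2}F_{n-2-2j}$, with no $u(n)$ contribution.

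I do not foresee any real obstacle here beyond careful bookkeeping: the main risk is off-by-one errors in the upper limits $\lfloor(n-1)/2\rfloor$ and $\lfloor(n-3)/2\rfloor$ when matching them with $n/2-2$ and $(n-3)/2$ in the target formula, and verifying that the $F_n$ term is correctly separated from the first sum so that the remaining Fibonacci terms pair up with those of the second sum to produce the factor $2$. Once the indices are aligned, the identity follows by inspection.
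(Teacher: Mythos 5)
Your proposal is correct and is essentially the paper's own (implicit) argument: the corollary is stated there as a direct consequence of the preceding theorem, obtained exactly by the specialization $i=r=s=1$, $k=2$ with $L^{1}_{1,1}(2,n)=L_n$, $F^{1}_{1,1}(2,m)=F_{m+2}$, and $n\geq 3k-2=4$. Your evaluation of $u(n)$ by parity and the index bookkeeping in the two sums check out (e.g.\ $n=4$ gives $2+F_4+2F_2=7=L_4$ and $n=5$ gives $F_5+2(F_3+F_1)=11=L_5$).
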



\section{Matrix generators}
\label{Sec6}

In this section, we discuss an important tool, the matrix methods, for obtaining results about recurrence relations. Some examples of the power of this method can be seen, among others, in \cite{brualdi1991matrix, distance2}.

Let 
$$Q=\left[ \begin{array}{cc}
1 & 1 \\
1 & 0
\end{array} \right] \mbox{ and } P=\left[ \begin{array}{cc}
2 & 1 \\
1 & 0
\end{array} \right].$$
Taking $n$-th power, we get the Fibonacci and Pell numbers,
\begin{center}$Q^{n}=\left[ \begin{array}{cc}
	F_{n+1} & F_{n} \\
	F_{n} & F_{n-1}
	\end{array} \right]$ 
	\mbox{ and }
	$P^{n}=\left[ \begin{array}{cc}
	P_{n+1} & P_{n} \\
	P_{n} & P_{n-1}
	\end{array} \right],$
\end{center}
from where we have the Cassini formulas: $\det Q^{n}=(-1)^{n}=F_{n+1}F_{n-1}-F_{n}^{2}$ and $\det P^{n}=(-1)^n=P_{n+1}P_{n-1}-P_{n}^{2}$.

Let $k\geq i$. Define $Q_{k}=[q_{tj}]_{k\times k}$, where, for each given $1\leq t\leq k$, $q_{t1}$ equals the coefficient of $F^{i}_{r,s}(k,n-t)$ in the recurrence \eqref{23.1} defining $F^{i}_{r,s}(k,n)$ and, for $j\geq 2$,  
\begin{equation*}
q_{tj}=
\left \{
\begin{array}{cc}
1, & \textnormal{if} \ j=t+1 \\
0, & \textnormal{otherwise}.  \\
\end{array}
\right.
\end{equation*}

For example, for $i=2$ and $k=2,3, \ldots, k$ we have:
$$Q_2=\left[ \begin{array}{cc}
0 & 1 \\
r+s & 0
\end{array} \right], Q_3=\left[ \begin{array}{ccc}
0 & 1  & 0 \\
r & 0 & 1\\
s & 0  & 0
\end{array} \right], Q_4=\left[ \begin{array}{cccc}
0 & 1  & 0 & 0 \\
r & 0 & 1 & 0\\
0 & 0  & 0& 1\\
s & 0  & 0&  0\\
\end{array} \right],\ldots,$$

$$Q_k=\left[ \begin{array}{ccccc}
0 & 1  & 0 & \ldots & 0\\
r & 0 & 1 & \ldots& 0\\
\vdots & \vdots  & \vdots& \ddots & \vdots \\
0 & 0  & 0&  \ldots& 1\\
s & 0  & 0&  \ldots & 0
\end{array} \right].$$\\

We will also need the following matrix of initial conditions:
$$A_k=\left[ \begin{array}{cccc}
F^{i}_{r,s}(k,2k-3) & F^{i}_{r,s}(k,2k-4)  & \ldots & F^{i}_{r,s}(k,k-2)\\
F^{i}_{r,s}(k,2k-4) & F^{i}_{r,s}(k,2k-5)  & \ldots& F^{i}_{r,s}(k,k-3)\\
\vdots & \vdots  & \ddots & \vdots \\
F^{i}_{r,s}(k,k-1) & F^{i}_{r,s}(k,k-2)  &  \ldots & F^{i}_{r,s}(k,0)\\
F^{i}_{r,s}(k,k-2) & F^{i}_{r,s}(k,k-3)  &  \ldots & F^{i}_{r,s}(k,-1)
\end{array} \right].$$

Now, we are ready to present the main result of this section. The proof is easily get by induction. 

\begin{theorem} Let $k\geq 2$ and $n\geq -1$ be integers. Then, 
	$$A_kQ_{k}^{n+1}=\left[ \begin{array}{ccc}
	F^{i}_{r,s}(k,n+2k-2)   & \ldots & F^{i}_{r,s}(k,n+k-1)\\
	F^{i}_{r,s}(k,n+2k-3)   & \ldots & F^{i}_{r,s}(k,n+k-2)\\
	\vdots   & \ddots & \vdots \\
	F^{i}_{r,s}(k,n+k)   &  \ldots & F^{i}_{r,s}(k,n+1)\\
	F^{i}_{r,s}(k,n+k-1)   &  \ldots & F^{i}_{r,s}(k,n)
	\end{array} \right].$$
\end{theorem}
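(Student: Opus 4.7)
My plan is to prove this by straightforward induction on $n$, exactly as the authors suggest. The structure of $Q_k$ is that of a companion-type matrix: its first column encodes the recurrence coefficients ($r$ in row $i$, $s$ in row $k$, zeros elsewhere), while columns $2,\ldots,k$ form a superdiagonal of ones. Right-multiplication by $Q_k$ therefore performs two operations on any matrix $M$: for $b \geq 2$, $(MQ_k)_{a,b} = M_{a,b-1}$ (a shift), and for $b=1$, $(MQ_k)_{a,1} = rM_{a,i} + sM_{a,k}$ (an application of the recurrence). This is precisely the behavior we need.

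First, I would dispose of the base case $n=-1$, where $A_k Q_k^{0} = A_k$. A direct index check shows the $(a,b)$ entry of $A_k$ is $F^{i}_{r,s}(k, 2k-1-a-b)$, which matches the $(a,b)$ entry $F^{i}_{r,s}(k, n+2k-a-b)$ of the right-hand matrix when $n=-1$.

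For the inductive step, assume the identity holds for some $n-1 \geq -1$, so that $A_k Q_k^{n}$ has $(a,b)$ entry $F^{i}_{r,s}(k, n+2k-1-a-b)$. Writing $A_k Q_k^{n+1} = (A_k Q_k^{n}) Q_k$ and using the two operations above: for $b \geq 2$, the $(a,b)$ entry becomes $F^{i}_{r,s}(k, n+2k-1-a-(b-1)) = F^{i}_{r,s}(k, n+2k-a-b)$, matching the target; for $b=1$, the entry becomes $rF^{i}_{r,s}(k, n+2k-1-a-i) + sF^{i}_{r,s}(k, n+2k-1-a-k)$, which by \eqref{23.1} collapses to $F^{i}_{r,s}(k, n+2k-1-a) = F^{i}_{r,s}(k, n+2k-a-1)$, again matching the target.

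The only subtle point — and the main thing to watch — is ensuring that the recurrence \eqref{23.1} is legitimately applicable at the argument $n+2k-1-a$ for every $a \in \{1,\ldots,k\}$. The worst case is $a=k$, requiring $n+k-1 \geq \max\{k,i\}-1$; under the standing hypothesis $k \geq i$ from the section, this reduces to $n \geq 0$, so the induction proceeds cleanly from the base $n=-1$. No deeper combinatorics or tiling machinery is required — the proof is purely a matrix-bookkeeping argument, and I do not anticipate any real obstacle beyond careful index tracking.
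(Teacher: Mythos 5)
Your induction argument is correct and is exactly the route the paper intends: the authors simply state that "the proof is easily obtained by induction," and your write-up supplies precisely that induction, with the right reading of the companion structure of $Q_k$ (shift on columns $b\geq 2$, recurrence on column $1$) and the correct check that the recurrence \eqref{23.1} applies since $n+2k-1-a\geq k-1$ under the section's standing assumption $k\geq i$. No discrepancy with the paper's approach.
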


In order to get the generalization of the Cassini formula, we will need some results.

\begin{theorem} Let $k\geq 2$. Then, 
	\begin{equation*}
	\det Q_k=
	\left \{
	\begin{array}{ll}
	s(-1)^{k+1}, & \textnormal{if} \ i<k \\
	(s+r)(-1)^{k+1}, & \textnormal{if} \ i=k.  \\
	\end{array}
	\right.
	\end{equation*}
\end{theorem}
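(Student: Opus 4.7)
The plan is to compute $\det Q_k$ by cofactor expansion along the first column, which is where all the ``action'' is — the rest of the matrix is essentially a shifted identity.

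First I would record the sparsity of $Q_k$. By definition $q_{t,j}=0$ whenever $j\geq 2$ and $j\neq t+1$, so for $t\leq k-1$ the only nonzero entry outside column 1 is $q_{t,t+1}=1$, while row $k$ has $q_{k,j}=0$ for every $j\geq 2$. In column 1, the only potentially nonzero entries are $q_{i,1}$ and $q_{k,1}$; by construction $q_{i,1}=r$ and $q_{k,1}=s$ when $i<k$, whereas when $i=k$ these two contributions collapse to the single entry $q_{k,1}=r+s$.

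Next I would expand $\det Q_k$ along the first column. In the case $i<k$ this yields
\[
\det Q_k=(-1)^{i+1}r\,\det M_{i,1}+(-1)^{k+1}s\,\det M_{k,1},
\]
where $M_{t,1}$ denotes the minor obtained by deleting row $t$ and column 1. The minor $M_{k,1}$ consists of rows $1,\dots,k-1$ and columns $2,\dots,k$; by the sparsity observation its $(t,t)$-entry is $q_{t,t+1}=1$ and every other entry is $0$, so $M_{k,1}=I_{k-1}$ and $\det M_{k,1}=1$. The minor $M_{i,1}$, on the other hand, still contains the full original row $k$ restricted to columns $2,\dots,k$, which is the zero row; hence $\det M_{i,1}=0$. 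Combining these gives $\det Q_k=(-1)^{k+1}s$, as desired. In the case $i=k$ the same expansion reduces to a single term $(-1)^{k+1}(r+s)\det M_{k,1}=(-1)^{k+1}(r+s)$.

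There is no real obstacle here: the argument is just careful bookkeeping of which minors vanish and which are triangular. The only point that deserves attention is the case split $i<k$ versus $i=k$, which I would handle uniformly by noting that column $1$ of $Q_k$ has at most two nonzero entries, both of which live in rows $i$ and $k$, and that deleting any row other than $k$ leaves the all-zero row $k$ (restricted to columns $2,\dots,k$) behind.
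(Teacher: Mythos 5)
Your proposal is correct and follows the same route as the paper, which also computes $\det Q_k$ by cofactor expansion along the first column; your version simply spells out the bookkeeping (the identity minor $M_{k,1}$ and the vanishing minors coming from the zero last row) that the paper leaves implicit.
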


\begin{proof} The determinant can be easily calculated when we use the cofactor formula on the fist column of $Q_k$.
\end{proof}

\begin{theorem} Let $k\geq 2$ and $i=1$. Then, 
	$$\det A_k=s^{k-1}(-1)^{\frac{(k-1)(k+6)}{2}}.$$
\end{theorem}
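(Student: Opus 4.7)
My plan is to evaluate $\det A_k$ by performing a single round of row reductions that collapses the Hankel-type matrix into an (almost) anti-triangular one. Writing $i=1$ throughout and using the formula $F^{1}_{r,s}(k,n)=r^{n+1}$ for $-1\le n\le k-2$, the $(m,j)$ entry of $A_k$ is $a_{m,j}=F^{1}_{r,s}(k,2k-1-m-j)$, so that $A_k$ depends only on $m+j$ (it is a Hankel matrix). I would then form $B_k$ by replacing row $m$ by $R_m-rR_{m+1}$ for $m=1,\dots ,k-1$ (row $k$ untouched); since these row operations are triangular with unit pivots, $\det B_k=\det A_k$.

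The next step is to compute the entries of $B_k$ by splitting on $m+j$. For $m+j\ge k+1$ both $a_{m,j}$ and $a_{m+1,j}$ fall in the initial-condition range, so $a_{m,j}-ra_{m+1,j}=r^{2k-m-j}-r\cdot r^{2k-1-m-j}=0$. For $m+j\le k$ one has $2k-1-m-j\ge k-1$, so the recurrence $F^{1}_{r,s}(k,n)=rF^{1}_{r,s}(k,n-1)+sF^{1}_{r,s}(k,n-k)$ applies and gives
\[
a_{m,j}-r\,a_{m+1,j}=s\,F^{1}_{r,s}(k,k-1-m-j)=s\,r^{k-m-j},
\]
the last equality because $-1\le k-1-m-j\le k-2$. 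Consequently $B_k$ has entries $sr^{k-m-j}$ on and above the anti-diagonal $m+j=k$ in its first $k-1$ rows, zeros strictly below that anti-diagonal, and the untouched last row $(r^{k-1},r^{k-2},\dots,r,1)$.

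Now observe that column $k$ of $B_k$ has a single nonzero entry, namely the $1$ in row $k$. Expanding the determinant along column $k$ leaves a $(k-1)\times(k-1)$ matrix $B'_k$ whose entry $(m,j)$ equals $sr^{k-m-j}$ when $m+j\le k$ and $0$ when $m+j>k$: an anti-lower-triangular matrix whose anti-diagonal entries are all $s$. Its determinant is therefore the product of the anti-diagonal entries times the sign of the reversal permutation on $k-1$ letters, namely
\[
\det B'_k=(-1)^{(k-1)(k-2)/2}\,s^{k-1}.
\]
Finally, since $(k-1)(k+6)/2-(k-1)(k-2)/2=4(k-1)$ is even, the two exponents have the same parity, yielding $\det A_k=s^{k-1}(-1)^{(k-1)(k+6)/2}$ as claimed.

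The only real subtlety is ensuring the recurrence is valid for every cell used in the row reduction (one must check that $2k-1-m-j\ge k-1$ whenever $m+j\le k$, which is immediate) and that the sign from the reversal permutation matches the stated exponent; everything else is bookkeeping. The case $k=2$ and $k=3$ can be checked by hand against the formula as a sanity check, and they confirm the values $s$ and $-s^{2}$ respectively.
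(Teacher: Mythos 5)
Your proof is correct, and it cannot be "the same as the paper's" because the paper states this determinant evaluation without any proof at all; your row-reduction argument supplies a complete one. The key steps all check out: with $i=1$ the entries are $a_{m,j}=F^{1}_{r,s}(k,2k-1-m-j)$ and the initial values are $F^{1}_{r,s}(k,n)=r^{n+1}$ for $-1\le n\le k-2$, so for $m+j\ge k+1$ the differences $a_{m,j}-ra_{m+1,j}$ vanish, while for $m+j\le k$ the recurrence (applicable since $2k-1-m-j\ge k-1$) yields $s\,r^{k-m-j}$; the simultaneous operations $R_m\mapsto R_m-rR_{m+1}$, $1\le m\le k-1$, are left multiplication by a unit triangular matrix, so the determinant is preserved; expanding along the last column (whose only nonzero entry is the $1$ at position $(k,k)$, cofactor sign $+1$) leaves a $(k-1)\times(k-1)$ block supported on $\{m+j\le k\}$ with all entries on $m+j=k$ equal to $s$, whose determinant is $(-1)^{(k-1)(k-2)/2}s^{k-1}$; and since $(k-1)(k+6)/2-(k-1)(k-2)/2=4(k-1)$ is even, this matches the stated sign. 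The one point I would make explicit in a final write-up is why only the reversal permutation survives in the anti-triangular block: if $m+\sigma(m)\le k$ for all $m$ and $\sum_m\bigl(m+\sigma(m)\bigr)=k(k-1)$, then $m+\sigma(m)=k$ for every $m$; you assert this via "anti-lower-triangular," which is standard but deserves a line. Your hand checks for $k=2$ and $k=3$ (values $s$ and $-s^{2}$) agree with the formula, and your argument in fact proves the statement the paper left unproved.
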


\begin{corollary} Let $k\geq 2$, $i=1$, and $n\geq 0$. Then,
	\begin{equation*}
	\det A_{k}Q_k^{n+1}=s^{n+k}(-1)^{\frac{(k-1)(k+6)+2(k+1)(n+1)}{2}}.
	\end{equation*}
	\label{corocassini}	
\end{corollary}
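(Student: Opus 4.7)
The plan is to reduce this corollary directly to the two preceding theorems together with the multiplicativity of the determinant, so no new combinatorial or matrix-theoretic work is needed. First I would write
\[
\det\bigl(A_k Q_k^{n+1}\bigr) \;=\; \det A_k \cdot \bigl(\det Q_k\bigr)^{n+1},
\]
which is valid because all matrices involved are square of size $k$.

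Next I would substitute the two known determinants. Since $i=1<k$ for every $k\geq 2$, the previous theorem gives $\det Q_k = s(-1)^{k+1}$, so $(\det Q_k)^{n+1} = s^{n+1}(-1)^{(k+1)(n+1)}$. The other theorem (the case $i=1$) gives $\det A_k = s^{k-1}(-1)^{(k-1)(k+6)/2}$. Multiplying, the $s$-powers combine as $s^{k-1}\cdot s^{n+1}=s^{n+k}$, and the signs combine as
\[
(-1)^{\frac{(k-1)(k+6)}{2} + (k+1)(n+1)} \;=\; (-1)^{\frac{(k-1)(k+6)+2(k+1)(n+1)}{2}},
\]
which is exactly the claimed expression.

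The only thing worth double-checking is that the exponent of $-1$ in the final statement is an integer, i.e.\ that $(k-1)(k+6)$ is even, so that pulling the $2(k+1)(n+1)$ under the same fraction bar is legitimate. Since $(k-1)(k+6) = (k-1)k + 6(k-1)$ and $(k-1)k$ is a product of consecutive integers, this parity check is immediate. There is no real obstacle here: the corollary is just the bookkeeping of two determinants and a single application of $\det(XY)=\det X\det Y$, so I would present it as a one-line computation.
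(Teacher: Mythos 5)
Your proposal is correct and is essentially the paper's own (implicit) argument: the corollary follows directly from multiplicativity of the determinant together with the two preceding theorems giving $\det Q_k=s(-1)^{k+1}$ (valid here since $i=1<k$) and $\det A_k=s^{k-1}(-1)^{(k-1)(k+6)/2}$. Your parity check that $(k-1)(k+6)$ is even is a sensible extra verification but introduces nothing beyond the paper's route.
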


\section*{Acknowledgments}

The first author thanks the CNPq for the financial support. The second and third authors are thankful to FAPEAM for the financial support.



\noindent {Departamento de Ci\^encia e Tecnologia, Universidade Federal de S\~ao Paulo - UNIFESP, S\~ao Jos\'e dos Campos - SP, 12247-014, Brazil, silva.robson@unifesp.br}

\

\noindent {Universidade Federal do Amazonas, Manaus - AM, 69103-128, Brazil, ksouzamath@yahoo.com.br}

\

\noindent {Universidade do Estado do Amazonas, Manaus - AM, 69055-038, Brazil, agneto@uea.edu.br}

\end{document}